\newtheorem{remark}{Remark}
\newtheorem{theorem}{Theorem}
\newtheorem{lem}{Lemma}
\newtheorem{cor}{Corollary}
\newproof{proof}{Proof}
\newproof{pot}{Proof of Theorem \ref{ENMTW1}}
\let \al=\alpha
\let \be=\beta
\let \vare=\epsilon
\let \q=\quad
\newcommand{\R}{{\Bbb R}}
\newcommand{\N}{{\Bbb N}}
\newcommand{\C}{{\Bbb C}}
\begin{document}
\begin{frontmatter}

\title{Traveling waves in the 
nonlocal KPP-Fisher equation: 
different roles of the right and the left interactions}

\author[a]{Karel Hasik}
\author[a]{Jana Kopfov\'a}
\author[a]{Petra N\'ab\v{e}lkov\'a}
\author[c]{Sergei Trofimchuk}
\address[a]{Mathematical Institute, Silesian University, 746 01 Opava, Czech Republic}
\address[c]{Instituto de Matem\'atica y F\'isica, Universidad de Talca, Casilla 747,
Talca, Chile }

\bigskip

\begin{abstract}
\noindent  We consider  the nonlocal KPP-Fisher equation $u_t(t,x) = u_{xx}(t,x)  + u(t,x)(1-(K *u)(t,x))$ which describes the evolution of  population density $u(t,x)$ with respect to time $t$ and location $x$. The non-locality 
is expressed in terms of  the convolution of $u(t, \cdot)$ with  kernel $K(\cdot) \geq 0,$  $\int_{\R} K(s)ds =1$. The 
restrictions $K(s), s \geq 0,$ and $K(s), s \leq 0,$ are responsible for 
interactions of an individual with his left and right neighbors, respectively.  We show that these two parts of $K$ play quite different roles 
as  for  the existence and uniqueness of traveling fronts to the KPP-Fisher equation. In particular,  
if  the left interaction is dominant,   the uniqueness of fronts  can be proved, while 
the dominance  of the right interaction can induce the co-existence of 
monotone and oscillating fronts.  We also present a short proof of 
the existence of  traveling waves  without assuming  various technical  restrictions usually imposed on $K$.\end{abstract}
\begin{keyword} KPP-Fisher nonlocal  equation,  non-monotone positive traveling front, periodic solution, existence, uniqueness, Lyapunov-Schmidt reduction. \\
{\it 2010 Mathematics Subject Classification}: {\ 34K12, 35K57,
92D25 }
\end{keyword}
\end{frontmatter}
\newpage

\section{Introduction and main results}
\label{intro} 
This paper continues the studies of  traveling waves for the following  nonlocal version \cite{AC,BNPR,ChSh,FZ,GL,NPT,NPTT,ST,VP,VPb} of the  KPP-Fisher equation:
\begin{equation}\label{17nl}
u_t(t,x) = u_{xx}(t,x)  + u(t,x)(1-(K *u)(t,x)), \quad u \geq 0,\ (t,x) \in \R^2.
\end{equation}
The requirement $u(t,x) \geq 0$  is due to  the usual  interpretation of $u(t,x)$ as the population density at time $t$ and location $x$. The  convolution $(K *u)(t,x):= \int_{-\infty}^{+\infty} K(y) u(t,x-y)dy$ describes
the non-local interaction among  individuals; it is assumed that  the non-negative kernel $K \in L^1(\R, \R_+)$ is normalised by  $|K|_1= \int_\R K(s) ds=1$. It is clear that the restriction of $K(s)|_{\{s \geq 0\}}$ characterizes the instantaneous interaction of an individual  with his left side neighbors, its  intensity $\alpha_- \in [0, +\infty]$
  can be expressed  as  
$$
\alpha_-= \frac{1}{c}\int_0^{+\infty} sK(s)ds, 
$$
where $c$ is some positive parameter (wave velocity) to be specified later. Similarly, 
$$
\alpha_+ = \frac{1}{c}\int^0_{-\infty} |s|K(s)ds
$$
 can be used to quantify the intensity of  the interaction of an individual  with his right side neighbors.

We recall that the classical solution $u(t,x) = \phi(x +ct)$  is a
wavefront (or a traveling front) for  (\ref{17nl}) propagating with the velocity $c\geq 0$, if the profile
$\phi$ is $C^2$-smooth, non-negative and satisfies $\phi(-\infty) = 0$ and
$\phi(+\infty) = 1$. By replacing the condition $\phi(+\infty) = 1$ with the less restrictive  condition
$0< \liminf_{s \to +\infty}\phi(s) \leq \limsup_{s \to +\infty}\phi(s) < \infty$, we get the definition of a semi-wavefront. 
Clearly, each wave profile $\phi$ to (\ref{17nl})  satisfies the functional differential equation  
\begin{eqnarray} \label{twe2an} 
\phi''(t) - c\phi'(t) + \phi(t)(1-
(\phi* K)(t)) =0,  \quad t \in \R. \end{eqnarray}

The main concern of  this paper is the existence and uniqueness of wavefronts and semi-wavefronts to equation 
(\ref{17nl}) in the situation when $\alpha_+ >0$.  Since we have much more information about the existence-uniqueness  problem 
when $\alpha_+ =0$ (i.e. in the so-called delayed case), it is enlightening   to recall here the key results about traveling waves for the delayed KPP-Fisher equation:
\subsection{Case $\alpha_+ =0$: expected uniqueness of traveling fronts in the Hutchinson diffusive equation.} \label{Sub11}
If we formally choose  $K(t)= \delta(t- c\tau)$ with some $\tau >0$,  then (\ref{twe2an}) takes 
the form 
\begin{eqnarray} \label{twe2ade} 
\phi''(t) - c\phi'(t) + \phi(t)(1-\phi(t-c\tau)) =0,  \quad t \in \R, \end{eqnarray}
which is precisely the wave profile's equation for   the
diffusive Hutchinson's model 
\begin{equation}\label{17} \hspace{5mm}
u_t(t,x) = u_{xx}(t,x)  + u(t,x)(1-u(t-\tau,x)), \ u \geq 0,\ x \in
\R. \end{equation} 
Model (\ref{17}) is an important
example of delayed reaction-diffusion equations. In particular, during the past decade, 
the traveling fronts for this model  have been analysed by many authors, see 
\cite{ZAMP,BGHR,ADN,FW,fhw,FTnl,GT,GTLMS,HTa,HTb,KO,wz}. 
As a result  of these studies,  nowadays there is a rather satisfactory understanding 
of  the wavefronts' existence and uniqueness problems for model (\ref{17}) and, more generally, for equation (\ref{17nl}) with $\alpha_+ =0$, cf. \cite{FTnl}. It should 
be noted here that we are still far from having the complete solution to these problems: 
nevertheless,  several key open questions and plausible answers to them are stated 
in  \cite{ADN,HTa,HTb}.  In particular, 
the decomposition of the domain of parameters $(\tau,c) \in \R^2_+$ on the disjoint subsets 
associated with the classes of monotone wavefronts, non-monotone wavefronts, proper semi-wavefronts and 
no of semi-wavefronts to equation (\ref{17})  was obtained, modulo the generalized Wright conjecture \cite{BCKN,ADN,HTa,HTb}. By \cite{HTa},   for each $c \geq 2$ equation (\ref{17})  possesses at least one semi-wavefront. 
The uniqueness of the monotone wavefronts to  (\ref{17}) was proved in \cite{FW, GT, HTa}.  Moreover, 
a combination of \cite[Theorem 1.1 and Corollary 6.6]{fhw} with \cite[Theorem 5.1]{FTnl} assures the uniqueness of all fast (this means  $c\gg 1$) wavefronts for $\tau \leq 3/2$.  Actually, \cite{fhw} suggests that the uniqueness of all fast  semi-wavefronts can be deduced from the uniqueness of the heteroclinic connection in the Hutchinson's equation.  Since the proper semi-wavefronts are slowly oscillating \cite{ADN,HTa}, an expected positive answer 
to Jones' conjecture \cite{BCKN} (the uniqueness of slowly oscillating periodic solution in the Wright equation) gives an additional argument in favor  of the uniqueness of  semi-wavefronts for equation  (\ref{17}). 
Hence,  we believe that for each fixed pair $(\tau,c)$, $\tau \geq 0$, $c \geq 2$, the semi-wavefront solution to equation 
(\ref{17}) is unique (up to a translation).

\subsection{Case $\alpha_+ >0$: main existence and convergence results.}
It is somewhat surprising that the first  existence result for the equation (\ref{17nl})  was proved
under condition $\alpha_+ >0$. More precisely, it was established  by Berestycki {\it et al.}  \cite{BNPR} that the assumptions 
\begin{equation} \label{bc}
c \geq 2 \quad \mbox{and} \quad K \in C^1(\R, \R_+), \ K(0) >0, \  |K|_1=  1, \ \int_{-\infty}^{+\infty} K(s) e^{\lambda (c)s} ds <\infty, 
\end{equation}
where
\begin{equation}\label{E}
\lambda(c): = \frac{1}{2}(c- \sqrt{c^2-4})  \leq \mu(c): =  \frac{1}{2}(c+\sqrt{c^2-4}) 
\end{equation}
denote the positive roots of the quadratic equation $z^2 -cz + 1 = 0$, 
guarantee the existence of at least one semi-wavefront of   (\ref{17nl}).  Observe that the last inequality in (\ref{bc}) does not appear explicitly in \cite[Theorem 1.4]{BNPR}, however it was used to construct  a super-solution, cf. \cite[ p. 2836]{BNPR}.   Note also that  the  condition  $K(0) >0$ of (\ref{bc}) is essential for the proofs in \cite{BNPR} and  therefore  the existence result from \cite{BNPR} cannot be applied  when $\alpha_+ =0$ or $\alpha_- =0$.  Thus the known proofs \cite{ADN,HTa} of the existence of semi-wavefronts for  (\ref{17}) are based on rather different approaches.  

We show in the present paper that the method of \cite{HTa} can be also applied to  (\ref{17nl}) which allows  
 to weaken restrictions (\ref{bc}):
\begin{theorem} \label{Te1} Assume that  $K\in L_1(\R,\R_+), \ |K|_1= 1$. Then equation (\ref{17nl})  has at least one semi-wavefront  $u(t,x) = \phi(x+ct)$ if and only if $c \geq 2$.   \end{theorem}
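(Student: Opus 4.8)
The plan is to split the argument into the easy necessity part and the harder sufficiency part. For necessity, I would linearize equation~(\ref{twe2an}) at the unstable equilibrium $\phi=0$: a semi-wavefront has $\phi(-\infty)=0$ and stays positive, so standard Ikehara-type or comparison arguments force the asymptotics $\phi(t)\sim e^{\lambda t}$ (or $te^{\lambda t}$) as $t\to-\infty$ with $\lambda>0$ a real root of the characteristic equation $z^2-cz+1=0$ of the \emph{local} linearization (the nonlocal term $\phi\ast K$ tends to $0$ and does not affect the leading exponent). Such a real positive root exists precisely when $c\ge2$; for $0\le c<2$ the roots are complex and any nontrivial solution decaying at $-\infty$ must oscillate and change sign, contradicting $\phi\ge0$. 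This gives the ``only if'' direction.

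For sufficiency I would follow the scheme of \cite{HTa} adapted to the nonlocal kernel. The first step is to replace the profile equation by an integral (fixed point) formulation: rewrite~(\ref{twe2an}) as $\phi''-c\phi'-\mu\lambda\,\phi = \phi\,(\phi\ast K-1) - \mu\lambda\,\phi + \phi$... more cleanly, move a linear term to make the left-hand operator invertible with a positive Green's function $G_c$ on $C_b(\R)$, obtaining
\begin{equation}\label{intop}
\phi = \mathcal{A}\phi, \qquad (\mathcal{A}\phi)(t) := \int_{\R} G_c(t-s)\,\phi(s)\bigl(2-\,(\phi\ast K)(s)\bigr)\,ds,
\end{equation}
where $G_c\ge0$, $\int_\R G_c = 1/1 $ after the shift is chosen so that the constant $1$ is a fixed point. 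The key point is that $\mathcal{A}$ is monotone on the order interval $[0,1]$ in a suitable weighted space, so one can run monotone iteration from an explicit sub/supersolution pair. As supersolution one takes the constant $\bar\phi\equiv1$ (or a slightly larger constant), and as subsolution the standard KPP test function $\underline\phi(t)=\max\{0,e^{\lambda t}(1-Me^{\epsilon t})\}$ with $M$ large and $\epsilon>0$ small; one checks that $\mathcal A\underline\phi\ge\underline\phi$ using $K\ge0$, $|K|_1=1$ and the exponential decay of $\underline\phi\ast K$ at $-\infty$ (here only integrability of $K$ is needed, no smoothness, no $K(0)>0$, no exponential moment). The monotone iterates $\phi_{n+1}=\mathcal A\phi_n$ starting from $\underline\phi$ increase to a fixed point $\phi_*$ with $0\le\underline\phi\le\phi_*\le1$.

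It then remains to show $\phi_*$ is a genuine semi-wavefront, i.e. that $\liminf_{t\to+\infty}\phi_*(t)>0$ and $\limsup_{t\to+\infty}\phi_*(t)<\infty$. The upper bound is immediate from $\phi_*\le1$. For the lower bound one argues by contradiction: if $\liminf_{t\to+\infty}\phi_*(t)=0$, then along a sequence $t_n\to+\infty$ the shifted profiles $\phi_*(\cdot+t_n)$ converge (by the $C^2_{loc}$-compactness coming from~(\ref{intop})) to a nonnegative bounded entire solution $\psi$ of~(\ref{twe2an}) with $\psi(0)=0$; since $0$ is an absorbing lower bound for the parabolic/ODE comparison, $\psi\equiv0$, which contradicts $\phi_*(t)\ge\underline\phi(t)\to\infty\cdot 0$... rather, contradicts the fact that the ``mass'' lost at $-\infty$ cannot escape — a persistence/uniform-positivity lemma of the type in \cite{HTa} rules this out. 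The main obstacle is exactly this last persistence step: proving that the front does not decay to $0$ at $+\infty$ without any structural hypothesis on $K$ beyond $K\in L^1(\R,\R_+)$, $|K|_1=1$. I expect this to require a careful Diekmann--Kaper-style a~priori integral estimate (integrating the profile equation over $\R$ and exploiting $\int\phi''=\int\phi'=0$ together with the normalization $|K|_1=1$) to get a uniform positive bound $\int_{t}^{t+1}\phi_*\ge\kappa>0$, which then upgrades to $\liminf\phi_*>0$. Everything else — invertibility of the linear operator, positivity of $G_c$, the sub/supersolution inequalities — is routine once $c\ge2$ guarantees the real characteristic root $\lambda(c)$.
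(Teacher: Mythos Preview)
Your necessity argument is fine and matches the paper's one-line remark. The sufficiency plan, however, has a genuine gap at its core: the operator $\mathcal{A}$ is \emph{not} order-preserving on $[0,1]$, so monotone iteration cannot be used. The nonlinearity $\phi\mapsto \phi\,(2-\phi\ast K)$ is increasing in the first factor but decreasing in the second, and since $\phi\ast K$ need not equal $\phi$ pointwise these do not combine into a monotone map. A concrete obstruction: take $K$ supported in $(-\infty,0)$ (say $K=\delta_{-1}$ for the toy calculation) and compare $\phi_1\equiv 1/2$ with $\phi_2$ equal to $1/2$ on $(-\infty,0]$ and to $1$ on $(0,\infty)$; then $\phi_1\le\phi_2$ but at $t=-1/2$ the integrand for $\phi_1$ exceeds that for $\phi_2$. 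In fact your scheme would, if it worked, produce a profile with $0\le\phi\le1$, which by the paper's Lemma~\ref{po} is automatically a \emph{monotone} wavefront; but Fang--Zhao \cite{FZ} show monotone wavefronts do not exist for all admissible $K$, so the argument must break somewhere --- and it breaks at monotonicity.

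Two secondary points. First, the sub-solution check for $\underline\phi(t)=\max\{0,e^{\lambda t}(1-Me^{\epsilon t})\}$ does \emph{not} go through with only $K\in L^1$: one needs $\int_\R e^{-\epsilon s}K(s)\,ds<\infty$ to control $(\phi_+\ast K)(t)$ by $Ce^{\epsilon t}$ near $-\infty$. The paper handles this by first assuming $K$ compactly supported and then passing to the limit $K_j\to K$ using a uniform a~priori bound (their Lemma~\ref{ogran}). Second, the step you flag as the ``main obstacle'' --- persistence $\liminf_{t\to+\infty}\phi>0$ --- is actually already available from \cite{BNPR} (Lemma~\ref{bebe} in the paper) and is not the difficulty here.

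The paper's route is to introduce a cut-off $g_\beta$ to guarantee a~priori boundedness (Lemma~\ref{ogran}, Corollary~\ref{twoe}), take as upper solution the classical KPP front $\phi_+$ for $\phi''-c\phi'+g_\beta(\phi)=0$ rather than the constant $1$, verify $\phi_-\le A_m\phi\le\phi_+$ whenever $\phi_-\le\phi\le\phi_+$ (Lemmas~\ref{usl}, \ref{nre}), and then apply \emph{Schauder}'s fixed point theorem on the convex set $\{\phi_-\le\phi\le\phi_+\}$ in a weighted space. The cases $c=2$ and general $K$ are obtained by limiting from $c_j\downarrow2$ and from compactly supported $K_j\to K$, using the uniform bound $U(c,K)$ to extract convergent subsequences.
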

It is not difficult to deduce from this result the existence of at least one semi-wavefront for each given velocity $c \geq 2$ in the case of a more general equation   
\begin{eqnarray} \label{twe2anG} 
\phi''(t) - c\phi'(t) + \phi(t)\left(1- \int_{-\infty}^{+\infty}\phi(t-s)dm(s)\right) =0,  \quad t \in \R. \end{eqnarray}
Here the increasing function $m:\R \to \R$ satisfies $m(-\infty)=0,$ $m(+\infty)=1$.  In other words,  
the convolution $K*u$ of a continuous function $u$ with Lebesgue's integrable kernel $K$  (as in equation (\ref{twe2an})) is replaced here by a convolution $u*\mu$ of $u$ with the normalised Borel measure $\mu$ (where $\mu(A) =\int_Adm(s)$).  Clearly, this family of equations includes (\ref{twe2ade}) as a particular case. 

The symmetry (evenness) properties  of the kernel $K$ do not matter for such a general existence result as Theorem \ref{Te1}. However, the shape of $K$ plays a decisive role in the determination of monotone wavefronts to (\ref{17nl}). This question was exhaustively answered by Fang and Zhao \cite{FZ} in terms of roots of the equation  
\begin{equation}\label{fzc} 
\lambda^2-c\lambda - \int_{-\infty}^{+\infty} K(s)e^{-\lambda s}ds =0. 
\end{equation}
By \cite{FZ}, model (\ref{17nl}) has at least one monotone wavefront if and only if equation (\ref{fzc}) has a 
negative root. Moreover, the uniqueness of this wavefront was established  in the class of {\it  all monotone wavefronts}.  One of the main results of this paper shows that the above words in italic cannot be omitted if $\alpha_+>0$. This  makes a  striking difference with equation (\ref{17}) (case $\alpha_+ =0$) where the uniqueness of a monotone wavefront in the class of {\it  all semi-wavefronts} was established.  One of the amazing  consequences of the Fang and Zhao criterion  \cite{FZ} for the case $\alpha_- =0$ is the presence   of a unique monotone wavefront to equation (\ref{17nl})  for each given velocity $c \geq 2$.

Now, contrary to the cases of  proper semi-wavefronts and monotone wavefronts,  the existence and uniqueness of non-monotone wavefronts to equation (\ref{17nl}) with $\alpha_+ >0$ is  largely  an open problem.  The known results in this direction were obtained in \cite{AC,BNPR}.  In particular, Berestycki {\it et al.} \cite{BNPR} proved  that the positivity of the Fourier transform of $K$ (that, in turn, implies that the kernel $K$ is an even function satisfying $K(0) \geq K(s)$ for all $s \in \R$) implies the convergence of all semi-wavefront profiles: $\phi(+\infty)=1$.  The second result due to Alfaro and Coville \cite{AC} was obtained by means of $L^2$-estimates. This technique does not take into account the symmetry properties of $K$: Alfaro and Coville's   theorem  says that  the inequality 
\begin{equation}\label{AlC}
c > M^*\sqrt{\int_{-\infty}^{+\infty}  s^2K(s)ds}, 
\end{equation}
with $M^*$ being any  a priori estimate for the norm $|\phi|_\infty = \sup_{s\in \R}\phi(s)$ of semi-wavefront $\phi$,  $|\phi|_\infty\leq M^*$, 
guarantees that $\phi(+\infty)=1$. It should be noted here that the derivation of  the explicit formulas for $M^*$ is an important step of  proofs of the existence theorems.  The first such formula was proposed in \cite{BNPR} and  Lemma \ref{ogran} below develops further this investigation.   The presence of $M^*$ in  (\ref{AlC}) marks another  difference with the convergence criteria for the case $\alpha_+ =0$. Our analysis in this paper suggests that if  $\alpha_+ >0$,   the dependence of the convergence conditions on  the a priori estimates for $\phi$  cannot be avoided.
\begin{theorem} \label{ENMTW1} Let $M^*$ be a priori estimate for the norm $|\phi|_\infty = \sup_{s\in \R}\phi(s)$ of semi-wavefront $u(t,x)= \phi(x+ct)$,  $|\phi|_\infty\leq M^*$.  Then $\phi(+\infty) =1$ if at least one of the following three conditions is satisfied: 
\begin{enumerate}
\item[1)] $c > M^*\int_{-\infty}^{+\infty} |s|K(s)ds$  \quad \mbox{(i.e. $M^*(\alpha_++\alpha_-) <1$)};
\item[2)] $K(s) = 0$ for $s \leq 0,$ and $c > 2\int_{-\infty}^{+\infty} |s|K(s)ds$
 \quad \mbox{(i.e. $\alpha_+ =0, \ \alpha_- \in (0,1/2)$)};
 \item[3)] $K(s) \not\equiv 0$ for $s \leq 0$, $c > 2\int_{-\infty}^{+\infty} |s|K(s)ds$  \ \  \mbox{(i.e. $\alpha_+ >0, \ \alpha_+ + \alpha_- \in (0,1/2)$)}, and 
 \begin{equation}\label{estm}
M^* < \frac{1+\alpha_+-\alpha_- + \sqrt{(1+\alpha_+-\alpha_-)^2-4\alpha_+}}{2\alpha_+} .\end{equation}
\end{enumerate}
\end{theorem}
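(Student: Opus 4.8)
The plan is to reduce the whole statement to showing that the oscillation amplitude of $\phi$ at $+\infty$ vanishes, and to extract this from a single self‑improving inequality obtained by combining the profile equation at the extrema of $\phi$ with an ``advection estimate'' for $\phi*K-\phi$. \emph{Steps 1--2 (reduction and two identities).} A semi‑wavefront is strictly positive and $\phi,\phi',\phi''$ are bounded on $\R$ (bootstrap in (\ref{twe2an}) from the boundedness of $\phi$ and $\phi*K$). Put $R:=\limsup_{t\to+\infty}\phi(t)$, $\rho:=\liminf_{t\to+\infty}\phi(t)$, so $0<\rho\le R\le M^{*}$; since $|K|_1=1$ and the contribution of the transition layer of $\phi$ to $(\phi*K)(t)$ tends to $0$ as $t\to+\infty$, one has $\rho\le\liminf_{t\to+\infty}(\phi*K)(t)\le\limsup_{t\to+\infty}(\phi*K)(t)\le R$. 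If $\phi$ is eventually monotone then $\phi(+\infty)$ exists, $\phi'(+\infty)=\phi''(+\infty)=0$, and (\ref{twe2an}) forces $\phi(+\infty)=1$; otherwise $\phi$ has infinitely many local maxima (resp.\ minima) near $+\infty$ whose values may be chosen to tend to $R$ (resp.\ $\rho$), and there $\phi'=0$, $\phi''\le 0$ (resp.\ $\phi''\ge 0$). Evaluating (\ref{twe2an}) along these sequences gives $(\phi*K)(t_n)\to\ell\le 1$ with $\ell\in[\rho,R]$ (resp.\ $(\phi*K)(s_n)\to m\ge 1$ with $m\in[\rho,R]$), whence $\rho\le 1\le R$; it remains to prove that $\delta:=\max\{R-1,\,1-\rho\}$ vanishes, which gives $\phi(+\infty)=1$. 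The two identities I will use are: integrating $\phi''-c\phi'=-\phi(1-\phi*K)$ (with $c>0$ and $\phi'$ bounded),
\[
\phi'(t)=\int_{0}^{+\infty}e^{-cu}\,\phi(t+u)\bigl(1-(\phi*K)(t+u)\bigr)\,du ,
\]
and, from $\phi(t-y)-\phi(t)=-\int_{0}^{y}\phi'(t-r)\,dr$, the advection identity $(\phi*K)(t)-\phi(t)=-(\widetilde K*\phi')(t)$ with $|\widetilde K|_1=\int_{\R}|s|K(s)\,ds=c(\alpha_{+}+\alpha_{-})$, where $\widetilde K=\widetilde K_{-}+\widetilde K_{+}$, $\widetilde K_{-}$ supported in $[0,+\infty)$ with $|\widetilde K_{-}|_1=c\alpha_{-}$ (coming from $K|_{[0,+\infty)}$), $\widetilde K_{+}$ supported in $(-\infty,0]$ with $|\widetilde K_{+}|_1=c\alpha_{+}$ (coming from $K|_{(-\infty,0]}$).

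\emph{Step 3 (case 1).} From the representation and Step 1, for $t$ large $|\phi'(t)|\le \frac1c\,(R+o(1))(\delta+o(1))$, so $L:=\limsup_{t\to+\infty}|\phi'(t)|\le R\delta/c$. Feeding the advection identity into (\ref{twe2an}) at the extremal sequences (the $\phi''$‑terms having the right sign) yields $R-1\le c(\alpha_{+}+\alpha_{-})L$ and $1-\rho\le c(\alpha_{+}+\alpha_{-})L$, hence
\[
\delta\ \le\ c(\alpha_{+}+\alpha_{-})\,L\ \le\ (\alpha_{+}+\alpha_{-})\,R\,\delta\ \le\ (\alpha_{+}+\alpha_{-})\,M^{*}\,\delta .
\]
If $M^{*}(\alpha_{+}+\alpha_{-})<1$ this forces $\delta=0$, which is condition 1. (The same conclusion under condition 1 also follows from the $L^{2}$ energy estimate of \cite{AC}: multiply (\ref{twe2an}) by $\phi'$, integrate over $(-\infty,b_n]$ with $\phi'(b_n)\to 0$, use $\phi(1-\phi*K)=\phi(1-\phi)+\phi(\widetilde K*\phi')$ so that the first term integrates to a bounded boundary term, and bound the cross term by Young's inequality $\|\widetilde K*\phi'\|_2\le|\widetilde K|_1\|\phi'\|_2$ together with $|\phi|\le M^{*}$; this gives $\phi'\in L^{2}(\R)$, hence $\phi'\to 0$, hence $\phi\to 1$.)

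\emph{Step 4 (cases 2--3, and the main obstacle).} Under the weaker hypothesis $c>2\int_{\R}|s|K(s)\,ds$, i.e.\ $\alpha_{+}+\alpha_{-}<1/2$, the coefficient $(\alpha_{+}+\alpha_{-})R$ above need not be $<1$ (indeed Step 3 already shows $\delta>0$ forces $R\ge 1/(\alpha_{+}+\alpha_{-})>2$), so one must use the splitting $\widetilde K=\widetilde K_{-}+\widetilde K_{+}$. The $\widetilde K_{-}$‑part of $(\phi*K)(t_n)-\phi(t_n)$ involves only \emph{past} values of $\phi'$ near $+\infty$ and, by a sharper form of the self‑improving inequality of Step 3, can be controlled in terms of $\rho,R,\alpha_{-}$ alone, with no a priori bound on $\phi$; the $\widetilde K_{+}$‑part couples to \emph{future} values of $\phi$ and is the only place where $M^{*}$ is genuinely needed, always multiplied by $\alpha_{+}$. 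In case 2) ($\alpha_{+}=0$) the $\widetilde K_{+}$‑part disappears, the refined inequality takes the form $\delta\le 2\alpha_{-}\delta$, and $\alpha_{-}<1/2$ forces $\delta=0$ with no restriction on $M^{*}$. In case 3) ($\alpha_{+}>0$), collecting the two contributions and dividing by $\delta>0$ produces an inequality which, combined with $R\ge 1/(\alpha_{+}+\alpha_{-})$ (and the elementary fact that $1/(\alpha_{+}+\alpha_{-})$ exceeds the \emph{smaller} root of $\alpha_{+}x^{2}-(1+\alpha_{+}-\alpha_{-})x+1$), forces $\alpha_{+}R^{2}-(1+\alpha_{+}-\alpha_{-})R+1\ge 0$ and hence $R\ge x_{+}$, the larger root of that polynomial; since $R\le M^{*}$, this contradicts $M^{*}<x_{+}$, which is exactly (\ref{estm}). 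The routine parts are Steps 1--3; the hard part will be Step 4 — precisely the bookkeeping that isolates which occurrences of $\phi$ may be replaced by $R$ (or by $1$) and which must keep $M^{*}$, the uniform handling of the $K$‑tails and of the transition layer inside the convolution at the near‑extrema of $\phi$, and checking that the resulting quadratic in $R$ degenerates exactly at the threshold (\ref{estm}).
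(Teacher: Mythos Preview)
Your approach is essentially the paper's: both derive the pair of inequalities
\[
R-1\le \alpha_+R(R-1)+\alpha_-R(1-\rho),\qquad 1-\rho\le \alpha_+R(1-\rho)+\alpha_-R(R-1)
\]
(the paper's Lemma~\ref{L9}; your ``advection identity'' at extrema is exactly the mean--value step in that lemma, and your integral formula for $\phi'$ gives the same one--sided bounds $D\le R(1-\rho)/c$, $-d\le R(R-1)/c$ as the paper obtains from $\phi''=0$ points). Your Step~3 (case~1) is correct and matches the paper.

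The gap is in Step~4, case~2: the asserted ``refined inequality $\delta\le 2\alpha_-\delta$'' does not follow from your setup. What actually comes out (with $\alpha_+=0$) is the pair above, i.e.\ $R-1\le\alpha_-R(1-\rho)$ and $1-\rho\le\alpha_-R(R-1)$; multiplying gives only $R\ge 1/\alpha_-$ if $R>1$, which by itself is no contradiction. The missing ingredient, which you never invoke, is the constraint $\rho>0$ (equivalently $1-\rho<1$): plugging this into the first inequality yields $R(1-\alpha_-)<1$, so $R<1/(1-\alpha_-)<2<1/\alpha_-$, contradicting $R\ge 1/\alpha_-$. This is precisely the paper's geometric observation that in Figure~1(a) the upper component of $\mathfrak{A}\cap\mathfrak{B}$ lies entirely in $\{p\le 0\}$.

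Your case~3 argument is essentially right but slightly garbled. The quadratic $\alpha_+R^2-(1+\alpha_+-\alpha_-)R+1\ge 0$ is obtained directly from the first inequality above by using $1-\rho\le 1$ (again the input $\rho\ge 0$); it is not something you ``force'' from $R\ge 1/(\alpha_++\alpha_-)$. Once you have the quadratic, then $R\ge 1/(\alpha_++\alpha_-)>x_-$ (the root ordering you state is indeed elementary: evaluating the quadratic at $1/(\alpha_++\alpha_-)$ gives $\alpha_-(2(\alpha_++\alpha_-)-1)/(\alpha_++\alpha_-)^2<0$) excludes $R\le x_-$, hence $R\ge x_+$, contradicting $M^*<x_+$. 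Note that letting $\alpha_+\to 0$ in this argument recovers case~2 cleanly (the quadratic degenerates to $R\le 1/(1-\alpha_-)$), so you can simply treat case~2 as the limit of case~3 rather than via the incorrect $\delta\le 2\alpha_-\delta$.
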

We  note  that 
the right hand side of (\ref{estm})  is well defined when $\alpha_+ + \alpha_- \in (0,1/2)$. Condition (\ref{estm}) can be further improved within our approach, however, we 
do not pursue this goal in the paper. It is worth noting that $\alpha_+$ and $\alpha_-$ are entering  (\ref{estm})  in asymmetric way and  this inequality is satisfied automatically  when $\alpha_+ \to 0^+$ (thus  condition 2 of Theorem \ref{ENMTW1}  can be considered as a limit case, at  $\alpha_+ = 0^+$, of condition 3).   Obviously,   the inequality $c > M^*\int_{-\infty}^{+\infty}  |s|K(s)ds$  is less restrictive than the Alfaro and Coville  condition (\ref{AlC}) in view of H\"older's inequality. 

Since the proof of Theorem \ref{ENMTW1} is one of the principal motivations for our studies exposed in the next subsection, we outline it below.  
\begin{pot} Take $c \geq 2$ 
and consider semi-wavefront solution $u(t,x)=\phi(x+ct)$.  Then 
it can be proved that
$
p := \liminf_{t \to +\infty} \phi(t) \leq 1, \quad P := \limsup_{t \to +\infty} \phi(t) \geq 1
$
are positive numbers satisfying certain systems of inequalities, the simplest of which has the following form (see Lemma \ref{L9} in Section \ref{ETW}):
$$
p+\alpha_+ P(1-p) +\alpha_- P(P-1) \geq 1,
$$
$$
 P-\alpha_+ P(P-1) -\alpha_- P(1-p) \leq 1. 
$$
\begin{figure}[h]\label{F1}
\begin{center}
\includegraphics[scale=0.65]{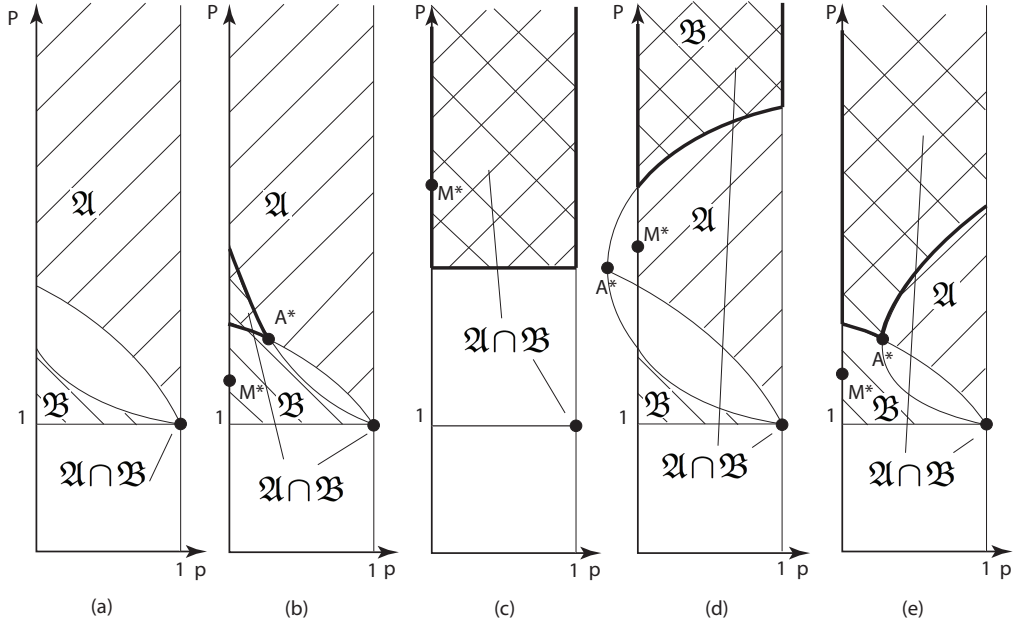}
\end{center}
\caption{Domains $\frak{A}, \ \frak{B}$ and  $\frak{A}\cap \frak{B}$ when (a) $\alpha_+ =0, \ \alpha_- \in (0,1/2)$; (b) $\alpha_+ =0, \ \alpha_- \geq 1/2$;  (c)  $\alpha_- = 0, \ \alpha_+ >0$; (d)  $\alpha_\pm > 0, \ \alpha_+ +\alpha_- <1/2$; (e)  $\alpha_\pm > 0, \ \alpha_+ +\alpha_- \geq 1/2$.}
\end{figure}
Figure 1 represents the position of the domains defined by the first ($\frak{A}$) and the second ($\frak{B}$)
inequality  in the cases  (a) $\alpha_+ =0, \ \alpha_- \in (0,1/2)$; (b) $\alpha_+ =0, \ \alpha_- \geq 1/2$;  (c)  $\alpha_+ > 0, \ \alpha_- =0$; (d)  $\alpha_\pm > 0, \ \alpha_+ +\alpha_- <1/2$; (e)  $\alpha_\pm > 0, \ \alpha_+ +\alpha_- \geq 1/2$,  respectively.  The points $(1,1)$ and  
$$
A^*= \left(2-\frac{1}{\alpha_-+\alpha_+}, \frac{1}{\alpha_-+\alpha_+}\right)
$$
belong to the intersection of the boundaries of $\frak{A}, \ \frak{B}$:  $(1,1)$, $A^* \in \partial  \frak{A} \cap \partial  \frak{B}$. 

In the case (a), it is clear that the unique point satisfying both inequalities is $p=P=1$, which  implies the convergence of each profile at $+\infty$. In the cases (b)-(e), however, the final result depends strongly on the position of $M^*$. If $M^*$ is situated as in the picture (d) (that  is analytically  expressed by (\ref{estm})) or as in pictures (b) and (e)  (that is,   $M^* < 1/(\alpha_++\alpha_-) $), then the upper part of the intersection $\frak{A}\cap\frak{B}$ can be ignored so that $p=P=1$ and we obtain the convergence of all semi-wavefronts at $+\infty$. \qed
\end{pot}
However, if the position of $M^*$ is as in the picture (c), there is a  possibility of 
the co-existence of a monotone wavefront (recall here that $\alpha_-=0$ assures its existence in virtue of Fang and Zhao criterion)  and a proper oscillating semi-wavefront.  The main result of this paper  consists precisely of the analytical proof of such a  dynamical behaviour for certain systems with $\alpha_- =0$.  
\begin{remark}\label{ira} Clearly, the statement of Theorem \ref{ENMTW1} remains true if 
we replace $M^*$ with the smaller value $P = \limsup_{t \to +\infty} \phi(t)$.  In the case (b),  the condition 
$P< 1/(\alpha_++\alpha_-)$ can be replaced by the dual inequality $p > 2- 1/(\alpha_++\alpha_-)$, where 
$p = \liminf_{t \to +\infty} \phi(t)$.

\end{remark}

\subsection{Case $\alpha_- =0$: the co-existence of monotone traveling fronts and proper semi-wavefronts in the  KPP-Fisher equation with advanced argument.}

\noindent The recent work by Nadin {\it et al.} \cite{NPTT}  has provided another  argument supporting the conjecture about the co-existence of different dynamical patterns in  equation (\ref{17nl}). The authors of \cite{NPTT}  have proposed the following substitute, with $K(s)= \delta(s+h)$, (called {\it "a toy model"}) of (\ref{twe2an}):  
\begin{eqnarray}\label{Gt}
\phi''(t) -c\phi'(t) = - \left\{\begin{array}{cc} A\phi(t),& \phi(t) \in [0,\theta), \\    1 - \phi(t+h),
& \phi(t) \geq  \theta,\end{array}\right. 
\end{eqnarray}
(actually, this equation is obtained from the original toy model from \cite{NPTT} by reversing time).  The 
positive parameters $A, h$ and $\theta \in (0,1)$ satisfy the inequality $A \geq (1-\theta)/\theta$, which is the reminiscence of the sub-tangency condition at $0$  of the classical KPP-Fisher nonlinearity.  The piece-wise linear model (\ref{Gt}) inherits the local properties at the steady states from  (\ref{17nl})  and therefore it can be used 
to understand the geometry of the semi-wavefronts to (\ref{17nl}).  It is  a remarkable fact  that the computations of \cite{NPTT} predicted the co-existence of  asymptotically periodic  semi-wavefronts and monotone as well as oscillating wavefronts in equation (\ref{17nl}). Nevertheless, the toy model (\ref{Gt}) has 
one  important deficiency: the right hand side of (\ref{Gt}) is a discontinuous functional.  At a first glance, precisely this drawback could be considered as a main reason for the existence of multiple semi-wavefronts.  Indeed, let us consider the following "delayed" toy model:
\begin{eqnarray}\label{Gt2}
\phi''(t) -c\phi'(t) = - \left\{\begin{array}{cc} \phi(t),& \phi(t) \in [0,0.5), \\    1 - \phi(t-c\tau),
& \phi(t) \geq  0.5,\end{array}\right. 
\end{eqnarray}
where $c =2.5$,  $c\tau = 2\ln1.5 =0.8109\dots$ (so that $\tau = 0.8\ln1.5 = 0.3243\dots < 1/e =0.3678\dots$). It is easy to check that the eigenvalues of (\ref{Gt2}) at $0$ are $0.5$ and $2$, while the set of all eigenvalues at $1$ contains  two negative numbers $-0.5$ and $-4.035\dots$ This information allows us to construct two different monotone wavefronts $\phi_j \in W^{2,\infty}(\R)$ to (\ref{Gt2}): 
\begin{eqnarray*}
\phi_1(t) =  \left\{\begin{array}{cc} 0.5 e^{0.5 t},&  \\    1 - 0.5 e^{-0.5t},
& \end{array}\right.   \phi_2(t) =  \left\{\begin{array}{cc} 0.5 e^{2 t},& t \leq 0, \\    1 - 0.28.. e^{-0.5t}- 0.21..e^{-4.03.. t},
& t >0.\end{array}\right. 
\end{eqnarray*}
Even more surprisingly, an oscillating wavefront to (\ref{Gt2}) can also be constructed. Indeed, since 
$x_0\pm iy_0, \ x_0=-6.2402\dots, \ y_0 = 10.054\dots$ is a pair of conjugated eigenvalues to the  equation (\ref{Gt2}) at the steady state $1$, it is easy to find the following oscillating profile $\phi_3 \in W^{2,\infty}(\R)$:
\begin{eqnarray*}
  \phi_3(t) =  \left\{\begin{array}{cc} 0.5 e^{2 t},& t \leq 0, \\    1 + \hat{a} e^{x_0t}\cos(y_0t + z_0),
& t >0,\end{array}\right.
\end{eqnarray*}
where 
$$
\hat{a}  = \frac{1}{4}+\frac{(1-0.5x_0)^2}{y_0^2} = 0.546\dots, \quad \cos(z_0) = -\frac{0.5}{\hat{a} }, \ z_0 = 2.727\dots  
$$
See Figure 2 where all three solutions are shown. 
\begin{figure}[h]\label{F2}
\begin{center}
\includegraphics[scale=0.37]{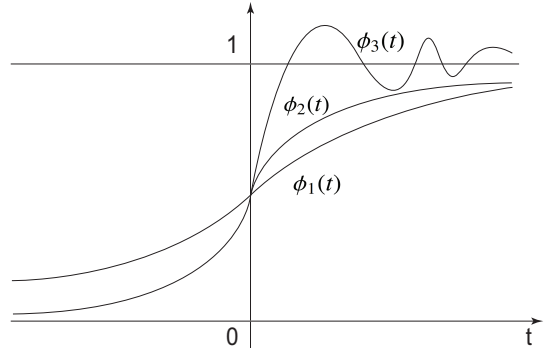}
\end{center}
\caption{Co-existence of monotone and oscillating wavefronts in a delayed toy model.}
\end{figure}
However, in view of the results 
mentioned in Subsection \ref{Sub11}, this equation should possess a unique wavefront (up to a translation).  Moreover,  the wavefront $\phi_2$ decreases rapidly at $-\infty$ (i.e. $\phi_2$ is a pushed front) that is  formally not 
compatible with the above mentioned sub-tangency condition $A \geq (1-\theta)/\theta$.  It is clear that the discontinuity of equation (\ref{Gt2}) is the main reason of  all these "contradictions".  

\vspace{2mm}

Hence the conclusions suggested by the analysis of the "toy" models must be corroborated  by rigorous  analytical proofs. In the present work, using Hale--Lin method \cite{HL} adapted for the singular functional differential equations in \cite{fhw,FTnl,GTLMS};  Hale--Huang analysis of the perturbed periodic solutions developed in \cite{DH,Hale, hale,HW};  Krisztin--Walther--Wu theory of an  invariant stratification of an attracting set for delayed monotone positive feedback \cite{KWW};  Magalh\~aes--Faria  normal forms for retarded functional-differential equations \cite{FM} and  Mallet-Paret--Sell theory of monotone cyclic feedback systems with delay \cite{mps,mps2}, we provide such a  result: 
\begin{theorem} \label{Te2} For each $\tau > 3\pi/2$ sufficiently close to $3\pi/2$ there exists $c_*(\tau)>2$ and an open subset $\Omega$\, of\,  $\R^3$ such that 
the KPP-Fisher equation with advanced argument 
$$
u_t(t,x) = u_{xx}(t,x)  + u(t,x)(1-u(t+\tau,x)), \ u \geq 0,\ x \in
\R,  $$  
has a three-dimensional family $u(t,x) = \phi( x+ct, \zeta,c), \ \zeta \in \Omega,$ of wavefronts for each $c > c_*(\tau)$.   For every fixed $c$, this family 
contains a unique (up to a translation)  monotone wavefront 
and maps continuously  and injectively $\Omega$ into the space $C_b(\R,\R)$ of bounded continuous functions on $\R$.    Moreover,  for each  $c > c_*(\tau)$, the above equation possesses  proper semi-wavefronts $u(t,x) = \psi( x+ct, c)$. The profiles $\psi(\cdot, c)$ are asymptotically  periodic at $+\infty$,  with $\omega(c)$-periodic limit functions $\psi_\infty(\cdot, c)$ having periods $\omega(c)$ close to $2\pi c$ and of the sinusoidal form (i.e.  each $\psi_\infty(\cdot, c)$ oscillates around 1 and has exactly two critical points on the period interval $[0, \omega(c))$). 
\end{theorem}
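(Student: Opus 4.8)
The plan is to treat the wave profile equation for a velocity $c$,
\[
\phi''(t)-c\phi'(t)+\phi(t)\bigl(1-\phi(t+c\tau)\bigr)=0,
\]
as a singular perturbation of a scalar first order equation with \emph{advanced} argument: writing $\ep=1/c$ and $w(s)=\phi(s/\ep)$ one obtains $\ep^2w''(s)-w'(s)+w(s)(1-w(s+\tau))=0$, which at $\ep=0$ reduces to $w'(s)=w(s)(1-w(s+\tau))$. The latter is the profile equation of the non-diffusive logistic equation with anticipation, and $\tau=3\pi/2$ is exactly its first Hopf value: the linearisation $v'=-v(s+\tau)$ at the positive steady state has the purely imaginary pair $\pm i$ when $\tau=3\pi/2$, and a direct computation with the characteristic function $z+e^{z\tau}$ shows that this pair crosses into $\{\mathrm{Re}\,z<0\}$ transversally as $\tau$ increases through $3\pi/2$. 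First I would carry out the spectral analysis of the two equilibria of the full profile equation: at $0$ the linearisation is the ordinary differential equation $v''-cv'+v=0$, with the two positive roots $\la(c)<\mu(c)$ and nothing else; at $1$ the characteristic function $z^2-cz-e^{zc\tau}$ (which is of advanced type, so that $\mathrm{Re}\,z\to+\infty$ along its complex roots) has a single negative real root $z_-$, the slowly oscillating complex pair bifurcating from $\pm i/c$, which lies in $\{\mathrm{Re}\,z<0\}$ precisely when $\tau>3\pi/2$, and all remaining roots in $\{\mathrm{Re}\,z>0\}$. Passing to the reversed time $s=-t$ — in which the equation becomes a well posed second order \emph{retarded} equation $\psi''+c\psi'+\psi(1-\psi(s-c\tau))=0$ — the steady state $1$ therefore acquires a \emph{three dimensional} unstable manifold for $\tau>3\pi/2$ (one real and one complex pair of unstable Floquet directions), as against a one dimensional one for $\tau\le 3\pi/2$; this is the mechanism behind the claimed three parameter family. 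The threshold $c_*(\tau)>2$ is fixed so that, for $c>c_*(\tau)$, this spectral picture holds, the a priori bound $M^*$ of Lemma \ref{ogran} is available, and the parameters fall in situation (c) of Figure 1, where Theorem \ref{ENMTW1} does not force the convergence of semi-wavefronts at $+\infty$.

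Second, I would construct the wavefronts. Using the Hale--Lin (Lyapunov--Schmidt) scheme \cite{HL} in the singular form developed for this class of functional differential equations in \cite{fhw,FTnl,GTLMS}, one writes a prospective heteroclinic as a reference profile plus a correction in an exponentially weighted space; the linearisation is Fredholm and its kernel and cokernel are read off from the asymptotic characteristic equations above. The outcome should be that the three dimensional unstable manifold of $1$ in the reversed time equation persists and, moreover, lies entirely in the basin of attraction of $0$ — here the comparison and transversality estimates of the construction, together with the invariant stratification of the attracting set for monotone feedback equations \cite{KWW,mps,mps2}, are used to control the global orbits and to stratify the manifold by the number of oscillations around $1$. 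Consequently each of its solutions is, in the original time, a heteroclinic from $0$ to $1$, i.e.\ a wavefront, which gives the open set $\Omega\subset\R^3$ and the continuous injective map $\zeta\mapsto\phi(\cdot,\zeta,c)$ into $C_b(\R,\R)$; the translation is one of the three parameters and the other two record the amplitude and phase with which the profile spirals into $1$ along the slow complex pair. The stratum on which that amplitude vanishes is one dimensional — a single profile up to translation — approaches $1$ along $z_-$, and is hence the monotone wavefront; identifying it with the (unique among monotone fronts) Fang--Zhao front \cite{FZ} and invoking the local injectivity yields the uniqueness of the monotone front inside the family.

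Third, the proper semi-wavefronts. I would first determine, through the normal form for retarded equations \cite{FM}, the criticality of the Hopf bifurcation of the reduced equation at $\tau=3\pi/2$; it must be such that for $\tau>3\pi/2$ a small amplitude periodic orbit $p_\tau$ coexists near the steady state $1$. The Hale--Huang theory of perturbed periodic solutions \cite{DH,Hale,hale,HW} then shows that $p_\tau$ survives the singular perturbation $\ep=1/c$ as a periodic solution $\psi_\infty(\cdot,c)$ of the profile equation of period $\om(c)=2\pi c\,(1+o(1))$; since $p_\tau(s)-1$ is, to leading order, a multiple of $\cos s$, the perturbed orbit keeps exactly two critical points per period, i.e.\ the sinusoidal shape asserted in the theorem. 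Theorem \ref{Te1} already provides, for every $c\ge 2$, a semi-wavefront $\psi(\cdot,c)$; to see that for the present parameters it is \emph{proper} and asymptotically periodic I would use the Poincar\'e--Bendixson theorem for monotone cyclic feedback systems \cite{mps,mps2} to force its $\om$-limit set as $s\to+\infty$ to be either $\{1\}$ or a single periodic orbit, exclude $\{1\}$ because in situation (c) the bound $M^*$ does not activate condition 3) of Theorem \ref{ENMTW1} (an energy/comparison argument showing that the semi-wavefront built in the proof of Theorem \ref{Te1} retains a non-vanishing oscillatory component near $1$), and then identify the limiting periodic orbit with $\psi_\infty(\cdot,c)$.

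The main obstacle, I expect, is the singular matching underlying the second and third steps: the term $\ep^2w''$ raises the order of the equation and creates a fast boundary layer at both equilibria, so the slow dynamics, the three dimensional unstable manifold of $1$, and the Floquet exponents of $p_\tau$ must all be continued uniformly as $\ep\to0^+$ — precisely the setting for which the singular Hale--Lin framework of \cite{fhw,FTnl,GTLMS} and the Hale--Huang perturbation theory were devised, though the estimates are delicate. Secondary difficulties are fixing the criticality of the Hopf bifurcation at $\tau=3\pi/2$ via the normal form \cite{FM}, on which the relevant side $\tau>3\pi/2$ and the very existence of the coexisting periodic orbit depend, and verifying that the \emph{whole} three dimensional unstable manifold of $1$ — not merely a lower dimensional part of it — remains in the basin of $0$.
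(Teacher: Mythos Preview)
Your plan for the three-dimensional family of wavefronts is broadly correct and close to the paper's, though the paper's transformation is cleaner: instead of working with the advanced limit equation $w'=w(1-w(s+\tau))$ and then reversing time when needed, the paper sets $y(t)=1-\phi(-ct)$ and $\epsilon=c^{-2}$ at the outset, landing directly on the \emph{delayed} problem $\epsilon y''+y'-y(t-\tau)(1-y(t))=0$. This places everything in standard DDE territory from the start, so the spectral counting (zeros of $\chi_1(z)=z-e^{-z\tau}$ in $\{\Re z>\mu_2\}$), the Hale--Lin Fredholm machinery, and later the Krisztin--Walther--Wu theory apply without further changes of variables. The three parameters then arise as the kernel dimension of the linearised operator about the monotone heteroclinic $\phi_0$ of the limit equation, not from a global statement about an unstable manifold; this matters because the paper never needs your acknowledged hard step that ``the whole three-dimensional unstable manifold of $1$ remains in the basin of $0$'' --- it perturbs a known connection in a weighted space, and the boundary conditions $y(-\infty)=0$, $y(+\infty)=1$ are inherited automatically. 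Positivity of the resulting $\phi$ is then read off from the integral representation (\ref{iie37}), and the monotone subfamily is isolated by a second weight $\mu_2'$ that strips the complex pair.

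There is, however, a genuine gap in your treatment of the proper semi-wavefronts. You propose to take the semi-wavefront supplied by Theorem~\ref{Te1} and argue, via Poincar\'e--Bendixson for cyclic feedback systems, that its $\omega$-limit set is the periodic orbit rather than $\{1\}$. But Theorem~\ref{Te1} is a pure existence result via Schauder with upper/lower solutions; nothing in that construction forces oscillation, and the semi-wavefront it yields may perfectly well be the monotone wavefront. Your proposed exclusion of convergence to $1$ --- ``the bound $M^*$ does not activate condition 3) of Theorem~\ref{ENMTW1}'' --- is a non-argument: Theorem~\ref{ENMTW1} gives \emph{sufficient} conditions for convergence, so their failure proves nothing. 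The paper takes an entirely different route. In the transformed delayed equation it first establishes, via \cite{KWW} and the normal-form computation of \cite{FM}, that for $\tau$ slightly above $3\pi/2$ there is a \emph{hyperbolic} slowly oscillating periodic solution $p(t)$ together with a heteroclinic $\phi_0$ running from $p$ to the equilibrium $1$. It then performs a \emph{second} Lyapunov--Schmidt reduction, now in a space $X^a$ of functions asymptotically $\omega$-periodic at $-\infty$, perturbing this periodic-to-point connection to $\epsilon>0$; an extra parameter $\gamma$ adjusts the period, and the bifurcation equation is solved using $\dim G_\R(1)=1$. Undoing the transformation gives directly a point-to-periodic connection for the original profile equation --- a proper semi-wavefront \emph{by construction} --- with the sinusoidal shape coming from \cite{mps2}. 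So the missing idea is: do not try to identify which semi-wavefront from Theorem~\ref{Te1} is proper; build the proper one from scratch by perturbing the periodic-to-point heteroclinic of the limit equation.
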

Theorem \ref{Te2} shows that the non-local KPP-Fisher equations with $\alpha_+ \gg \alpha_-$ may exhibit multiple patterns of wave propagation:  
\begin{cor} \label{coroc}There exists $c >2$ and an increasing function $m:\R \to \R$ satisfying $m(-\infty)=0,$ $m(+\infty)=1$ such that equation (\ref{twe2anG}) has, at the same time, a unique monotone wavefront, multiple oscillating wavefronts as well as asymptotically periodic proper semi-wavefronts propagating with the velocity $c$. 
\end{cor}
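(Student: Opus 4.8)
\medskip

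The plan is to derive the Corollary as a direct specialization of Theorem \ref{Te2} by taking the Borel measure $\mu$ in equation (\ref{twe2anG}) to be a point mass concentrated at a negative point, so that the general convolution $u*\mu$ reduces to the advanced argument $u(t+\tau,\cdot)$ appearing in the KPP-Fisher equation with advanced argument. Concretely, first I would fix $\tau>3\pi/2$ sufficiently close to $3\pi/2$ so that Theorem \ref{Te2} applies, and then fix any $c>c_*(\tau)$; note that automatically $c>2$. Next I would define $m:\R\to\R$ by $m(s)=0$ for $s<-c\tau$ and $m(s)=1$ for $s\geq -c\tau$, i.e.\ $dm(s)=\delta(s+c\tau)$ as a Borel measure. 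This $m$ is increasing with $m(-\infty)=0$ and $m(+\infty)=1$, as required. With this choice, $\int_{-\infty}^{+\infty}\phi(t-s)\,dm(s)=\phi(t+c\tau)$, so that equation (\ref{twe2anG}) becomes
$$
\phi''(t)-c\phi'(t)+\phi(t)\bigl(1-\phi(t+c\tau)\bigr)=0,\quad t\in\R,
$$
which is precisely the profile equation for wavefronts $u(t,x)=\phi(x+ct)$ of the equation $u_t=u_{xx}+u(1-u(t+\tau,x))$, after the substitution $\phi(t)=\phi(x+ct)$ and the change of the advanced shift from $\tau$ in $t$ to $c\tau$ in the traveling variable.

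\medskip

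Having reduced (\ref{twe2anG}) with this $m$ to the profile equation of the advanced KPP-Fisher equation, I would then simply invoke Theorem \ref{Te2} for the chosen $\tau$ and $c$. That theorem furnishes: (i) a three-dimensional family $\phi(\cdot,\zeta,c)$, $\zeta\in\Omega$, of wavefronts, mapped continuously and injectively into $C_b(\R,\R)$, which in particular contains more than one profile and hence multiple oscillating wavefronts (all those in the family other than the unique monotone one are non-monotone, since monotonicity singles out a one-parameter translation class); (ii) a unique-up-to-translation monotone wavefront inside this family; and (iii) proper semi-wavefronts $u(t,x)=\psi(x+ct,c)$ whose profiles are asymptotically periodic at $+\infty$ with sinusoidal-type periodic limit functions. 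Translating each of these objects back through $\phi(t)=\phi(x+ct)$ yields, for the single velocity $c$ and the single kernel measure $dm(s)=\delta(s+c\tau)$, a unique monotone wavefront, multiple oscillating wavefronts, and asymptotically periodic proper semi-wavefronts for equation (\ref{twe2anG}), which is exactly the assertion of the Corollary.

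\medskip

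The only point requiring a word of care — and the one place where something genuinely has to be checked rather than quoted — is that the objects produced by Theorem \ref{Te2} really do solve (\ref{twe2anG}) \emph{in the sense intended there}, i.e.\ that replacing the $L^1$-kernel convolution by a Borel-measure convolution is legitimate. This is immediate here because the measure $\mu=\delta_{-c\tau}$ makes the convolution integral collapse to a point evaluation $\phi(t+c\tau)$, for which $C^2$-smoothness of $\phi$ is all that is needed; Theorem \ref{Te2} delivers classical $C^2$ (indeed $W^{2,\infty}$-type) profiles, so no regularity is lost. I do not expect any serious obstacle: the Corollary is essentially a restatement of Theorem \ref{Te2} in the measure-convolution formalism, the role of the present proof being only to exhibit the explicit $m$ and to confirm that this singular kernel is admissible in (\ref{twe2anG}). \qed
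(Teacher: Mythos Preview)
Your proposal is correct and matches the paper's intended argument: the paper presents Corollary~\ref{coroc} as an immediate consequence of Theorem~\ref{Te2} without giving a separate proof, and your choice $dm(s)=\delta(s+c\tau)$ (equivalently, the step function $m$ jumping at $-c\tau$) is precisely the specialization that reduces equation~(\ref{twe2anG}) to the profile equation~(\ref{KPPadv}) of the advanced KPP--Fisher model. Your observation that the three-dimensional family of wavefronts, minus the one-dimensional translation class of the unique monotone front, supplies the multiple oscillating wavefronts is exactly the point, and the admissibility of the singular kernel in~(\ref{twe2anG}) is indeed trivial here since the convolution collapses to a point evaluation.
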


The structure  of
this paper is as follows.  In  Section \ref{ETW} we 
establish a series of auxiliary results and a priori estimates necessary to prove Theorems  \ref{Te1} and \ref{ENMTW1}.  Section \ref{ESW} contains the proof of Theorem \ref{Te1}. The first part of Theorem
\ref{Te2} (stated as Theorem \ref{Te2r}) is proved in Sections \ref{FP}, \ref{S5}.  The second part of Theorem
\ref{Te2} (stated as Theorem \ref{Te2A}) is proved in Section \ref{S6} of our work.

\section{A priori estimates and  the convergence of  semi-wavefronts} \label{ETW}
As it was suggested in \cite{HTa}, it is convenient to study equation (\ref{twe2an}) together with   
\begin{eqnarray} \label{twe2m} &&
\phi''(t) - c\phi'(t) + g_\beta(\phi(t))(1-
(\phi  * K)(t)) =0,  \end{eqnarray}
where  the continuous piece-wise linear function $g_\beta, \ \beta >1,$ is given by 
\begin{eqnarray}\label{G}
g_{\beta}(u)=\left\{\begin{array}{cc} u,& u \in [0,\beta], \\    \max\{0,2\beta -u\},
& u>  \beta.\end{array}\right. 
\end{eqnarray}
Observe that equation (\ref{twe2m}) has three constant solutions: $\phi(t) \equiv 0, 1, 2\beta$. 
We have the following 
\begin{lem} \label{beA} Assume that $\phi, \ \phi(-\infty) =0,$ is a non-negative, bounded and non-constant solution of 
(\ref{twe2m}). Then $\phi(t) \leq 2\beta$ for all $t \in \R$.  Next, if  either $t_0$ is a point of local maximum for $\phi(t)$ with $\phi(t_0) < 2\beta$ or  $t_0$ is the smallest number such that $\phi(t_0) =2\beta$, then $(\phi * K)(t_0) \leq 1$.  
\end{lem}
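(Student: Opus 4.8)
The plan rests on one elementary observation: on the set $\{t:\phi(t)\ge 2\beta\}$ the nonlinearity in (\ref{twe2m}) vanishes identically, since $g_\beta(u)=0$ for $u\ge 2\beta$, so there the profile equation degenerates to the linear autonomous equation $\phi''=c\phi'$, i.e. $(e^{-ct}\phi')'=0$. I would use this fact in each step. To prove the bound $\phi\le 2\beta$, suppose for contradiction that $\phi(t_1)>2\beta$ for some $t_1$. Since $\phi(-\infty)=0$, the set $\{t\le t_1:\phi(t)=2\beta\}$ is nonempty, closed and bounded above; let $t_0$ be its largest element. Then $\phi>2\beta$ on $(t_0,t_1]$, so $\phi\ge 2\beta$ on $[t_0,t_1]$ and $\phi'(t_0)\ge 0$, and the degenerate equation gives $\phi'(t)=\phi'(t_0)e^{c(t-t_0)}$ on $[t_0,t_1]$. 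If $\phi'(t_0)=0$ this forces $\phi\equiv 2\beta$ on $[t_0,t_1]$, contradicting $\phi(t_1)>2\beta$; hence $\phi'(t_0)>0$. But then $\phi'$ stays positive wherever $\phi\ge 2\beta$, so $\phi$ is increasing there and can never return to the level $2\beta$; thus $\phi\ge 2\beta$ on all of $[t_0,+\infty)$, where $\phi(t)=2\beta+\frac{\phi'(t_0)}{c}\bigl(e^{c(t-t_0)}-1\bigr)\to+\infty$ (and $\phi(t)=2\beta+\phi'(t_0)(t-t_0)\to+\infty$ if $c=0$), contradicting the boundedness of $\phi$.

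\emph{Positivity and case (a).} Next I record that $\phi(t)>0$ for all $t$: at an interior zero $t_0$ one has $\phi(t_0)=\phi'(t_0)=0$ (a minimum of the nonnegative $\phi$), and on a neighbourhood of $t_0$, where $\phi<\beta$ and hence $g_\beta(\phi)=\phi$, the function $\phi$ solves the linear ODE $\phi''-c\phi'+(1-\phi*K)\phi=0$ with zero Cauchy data, so $\phi\equiv 0$ near $t_0$; the zero set being open and closed, $\phi\equiv 0$, contradicting non-constancy. Consequently, in case (a) we have $0<\phi(t_0)<2\beta$ and hence $g_\beta(\phi(t_0))>0$; combining this with $\phi'(t_0)=0$ and $\phi''(t_0)\le 0$ at the local maximum, and evaluating (\ref{twe2m}) at $t_0$, gives $g_\beta(\phi(t_0))\,(1-(\phi*K)(t_0))=-\phi''(t_0)\ge 0$, i.e. $(\phi*K)(t_0)\le 1$.

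\emph{Case (b).} This is the only delicate point. Here $\phi(t_0)=2\beta$, which by the bound above equals $\max\phi$, so $\phi'(t_0)=0$; but now $g_\beta(\phi(t_0))=0$, so the pointwise evaluation of the equation is vacuous, and since $\phi*K$ is a nonlocal term a Taylor expansion of $2\beta-\phi$ at $t_0$ is equally useless (one checks that all of its derivatives at $t_0$ vanish without constraining $\phi*K$). Instead I would argue by contradiction with a monotonicity argument. If $(\phi*K)(t_0)>1$, continuity gives $\delta>0$ with $\phi*K>1$ on $(t_0-\delta,t_0)$; since $t_0$ is the first point with $\phi=2\beta$ and $\phi(t_0)=2\beta>0$, we also have $0<\phi<2\beta$ and hence $g_\beta(\phi)>0$ on $(t_0-\delta,t_0)$, so there $(e^{-ct}\phi')'=e^{-ct}g_\beta(\phi)\,\bigl((\phi*K)-1\bigr)>0$. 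Thus $t\mapsto e^{-ct}\phi'(t)$ is strictly increasing on $(t_0-\delta,t_0)$ and, being continuous with value $0$ at $t_0$, is strictly negative there; hence $\phi'<0$ on $(t_0-\delta,t_0)$, so $\phi(t)>\phi(t_0)=2\beta$ for $t$ just below $t_0$, contradicting $\phi\le 2\beta$. Therefore $(\phi*K)(t_0)\le 1$, which finishes the proof. The main obstacle is precisely this last case: the naive local computations are defeated by the nonlocality of $\phi*K$, and they must be replaced by the sign argument for $e^{-ct}\phi'$ on a left neighbourhood of $t_0$ used in tandem with the already-established global bound $\phi\le 2\beta$.
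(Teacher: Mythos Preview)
Your proof is correct. For the bound $\phi\le 2\beta$ and for case (a) you do essentially what the paper does (the paper also reduces to $\phi''=c\phi'$ on $\{\phi\ge 2\beta\}$ and evaluates the equation at a local maximum). You additionally insert the positivity argument here, which the paper postpones to its Lemma~\ref{po}; that makes your case (a) self-contained, since the paper's bare assertion ``$g_\beta(\phi(t_0))>0$'' tacitly uses $\phi(t_0)>0$.

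Case (b) is where your approach genuinely differs. The paper argues by producing a sequence $t_j\nearrow t_0$ with $\phi'(t_j)>0$, $\phi''(t_j)<0$, $\phi(t_j)<2\beta$, reads off $(\phi*K)(t_j)<1$ from the equation, and passes to the limit. The existence of such a sequence is asserted as ``clearly'' but actually needs a short argument (e.g.\ via the mean value theorem applied to $e^{-ct}\phi'$ between a point where $\phi'>0$ and $t_0$, where $\phi'(t_0)=0$). Your route bypasses this entirely: assuming $(\phi*K)(t_0)>1$, you use the integrating factor to show $e^{-ct}\phi'$ is strictly increasing to $0$ on a left neighbourhood of $t_0$, forcing $\phi'<0$ there and hence $\phi>2\beta$ just left of $t_0$, which contradicts the already established global bound. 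This is cleaner and more direct; the paper's sequence argument has the minor advantage of not invoking the global bound a second time, but at the cost of the unjustified ``clearly''.
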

\begin{proof} 
On the contrary, suppose that there exists a maximal interval  $(t_0, t_1)$, such that   $\phi(t) > 2\beta = \phi(t_0)$ for all $t \in (t_0, t_1)$. Then  $\phi'(t_*) > 0, \phi(t_*) > 2\beta$ for some $t_* \in (t_0,t_1)$. { It follows from (\ref{twe2m}) and the definition of $g_\beta$ that }
$\phi''(t) = c\phi'(t)$ for all  $\ t \in (t_0,t_1)$.  Hence, $\phi'(t) = \phi'(t_*)e^{c(t-t_*)} >0,$ $t \in (t_0,t_1)$ and therefore $t_1= +\infty$, $\phi(+\infty) = +\infty$,  contradicting the boundedness of  $\phi$.

Finally, if   $t_0$ is a point of local maximum for $\phi(t)$,  then $\phi'(t_0) = 0,$
 $\phi''(t_0) \leq  0$. If, in addition, $\phi(t_0) <2\beta$ then $g_\beta(\phi(t_0)) >0$ and  thus (\ref{twe2m}) 
assures that  $(\phi * K)(t_0) \leq 1$. In the case when $t_0$ is the smallest number such that $\phi(t_0) =2\beta$,
 then clearly there exists a sequence 
 $t_j \to t_0,$ $ t_j < t_0, \ j =1,2, \dots$ such that $\phi'(t_j) >0, \ \phi''(t_j) <0, \ \phi(t_j) < 2\beta$. But then 
$(\phi * K)(t_j) < 1,$ for all $j$ and therefore also $(\phi * K)(t_0) \leq 1$. \qed
\end{proof}
The  following  property of solutions to (\ref{twe2an}) and (\ref{twe2m}) was established in \cite[Lemmas 3.7 and 3.9]{BNPR}:   
\begin{lem} \label{bebe} Assume that $\phi$ is a non-negative, bounded and non-constant solution of 
(\ref{twe2m}) or (\ref{twe2an}). If, in addition, $\phi(t_n) \to 0$ along some sequence $t_n \to -\infty,$ then $\phi(t)$ is  increasing on some interval $(-\infty, \rho], \ \phi(-\infty) = 0,$ and  \
$
\liminf_{t \to +\infty} \phi(t) >0. 
$
\end{lem}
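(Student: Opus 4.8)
The plan is to argue in two stages: first that $\phi$ is eventually monotone near $-\infty$ with limit $0$, and then that $\liminf_{t\to+\infty}\phi(t)>0$. For the first stage, I would exploit the structure of the equation near the zero steady state, where the nonlinearity $g_\beta(\phi)(1-\phi*K)$ behaves like $\phi\cdot(1-\text{small})$, i.e. the linearization is $\phi''-c\phi'+\phi=0$ with characteristic roots $\lambda(c)\le\mu(c)$ both positive (for $c\ge2$). Writing $\phi$ via variation of parameters, $\phi(t) = \int_{-\infty}^t \frac{e^{\mu(c)(t-s)}-e^{\lambda(c)(t-s)}}{\mu(c)-\lambda(c)}\,\phi(s)\big(-1+(\phi*K)(s)\big)g_\beta(\phi(s))/\phi(s)\,ds$ after rewriting (\ref{twe2m}) appropriately, one sees that along the sequence $t_n\to-\infty$ on which $\phi(t_n)\to 0$, and using that $(\phi*K)(s)\to 0$ whenever $\phi$ is small on a large left-neighborhood, one can bootstrap: the hypothesis $\phi(t_n)\to0$ together with the positivity and the integral representation forces $\phi(t)\to0$ as $t\to-\infty$ (not merely along the subsequence). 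Concretely, I would pick the smallest root $\lambda(c)$ and show $\phi(t)\le C e^{\lambda(c)t}$ for $t$ sufficiently negative by a comparison/iteration argument; then $\phi*K$ is correspondingly small, the factor $1-(\phi*K)(t)$ is positive near $-\infty$, and from $\phi''-c\phi'=-\,g_\beta(\phi)(1-\phi*K)<0$ on a left half-line one deduces that $e^{-ct}\phi'(t)$ is increasing there; combined with $\phi\to0$ and $\phi\ge0$ this yields $\phi'(t)\ge0$ on some $(-\infty,\rho]$, giving monotonicity and $\phi(-\infty)=0$.

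For the second stage, the key point is that $\phi$ is nonconstant and $\phi(-\infty)=0$, so $\phi$ is not identically zero; I want to rule out $\liminf_{t\to+\infty}\phi(t)=0$. Suppose for contradiction that there is a sequence $s_n\to+\infty$ with $\phi(s_n)\to0$. I would first observe that $\phi$ cannot be eventually monotone decreasing to $0$: if $\phi(t)\to0$ as $t\to+\infty$, then for large $t$ the factor $1-(\phi*K)(t)$ is close to $1$ and positive, so (\ref{twe2m}) again gives $\phi''-c\phi'<0$ on a right half-line, hence $e^{-ct}\phi'(t)$ is increasing there; since $\phi\to0$ this forces $\phi'(t)\ge 0$ eventually, contradicting $\phi$ decreasing to $0$ unless $\phi\equiv0$. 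So $\phi$ must oscillate: there are local minima $m_n\to+\infty$ with $\phi(m_n)\to0$ interlaced with local maxima where, by Lemma \ref{beA}, $(\phi*K)\le1$. At a local minimum $t_0$ one has $\phi'(t_0)=0$, $\phi''(t_0)\ge0$, and (\ref{twe2m}) gives $g_\beta(\phi(t_0))(1-(\phi*K)(t_0))\le0$, so $(\phi*K)(t_0)\ge1$ whenever $\phi(t_0)>0$. Thus at the deep minima $m_n$ we have $(\phi*K)(m_n)\ge1$ while $\phi(m_n)\to0$; since $K$ has total mass $1$ and $\phi$ is bounded by $2\beta$, the mass of $\phi*K$ near $m_n$ must be carried by regions where $\phi$ is bounded away from $0$, which are necessarily a bounded distance from $m_n$. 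Tracking this back, one constructs a nested/translated sequence of profiles $\phi(\cdot+m_n)$ converging (by the a priori bounds on $\phi,\phi',\phi''$ and Arzelà–Ascoli) to an entire solution $\phi_\infty\ge0$ of the limiting equation with $\phi_\infty(0)=0$, $\phi_\infty\not\equiv0$ (the mass argument guarantees $\phi_\infty$ is not identically zero). But a nonnegative entire solution with an interior zero contradicts the strong maximum principle for the ODE: at $t=0$, $\phi_\infty(0)=0$, $\phi_\infty'(0)=0$, and $\phi_\infty''(0)=-g_\beta(0)(\cdots)=0$, and uniqueness for the ODE $\phi''-c\phi'+g_\beta(\phi)(1-\phi*K)=0$ with these Cauchy data forces $\phi_\infty\equiv0$ — the contradiction.

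The main obstacle I anticipate is the passage to the limiting profile $\phi_\infty$ and showing it is nontrivial: this requires quantitative control of how the convolution $\phi*K$ concentrates, which is delicate when $K$ is merely $L^1$ (it may have heavy tails), so one must be careful to show that the "mass" forcing $(\phi*K)(m_n)\ge1$ cannot all escape to infinity along with $m_n$ — equivalently, that $\phi$ cannot be uniformly small on arbitrarily long intervals adjacent to the $m_n$. A clean way around this is to use the a priori upper bound $\phi\le 2\beta$ (Lemma \ref{beA}) together with the fact that the tail of $K$ integrates to something strictly less than $1$ past any fixed radius, so a fixed positive fraction of the unit mass of $K$ is concentrated within a fixed radius $R$; hence $\sup_{|t-m_n|\le R}\phi(t)$ is bounded below by a positive constant, which both prevents $\phi$ from being small on long intervals and supplies the nontriviality of $\phi_\infty$. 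Once this is in place, the rest is the standard compactness-plus-maximum-principle argument sketched above, and $\liminf_{t\to+\infty}\phi(t)>0$ follows. This is exactly the argument of \cite[Lemmas 3.7 and 3.9]{BNPR}, which we invoke.
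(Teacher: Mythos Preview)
The paper does not supply a proof of this lemma at all: it simply records, immediately before the statement, that the result ``was established in \cite[Lemmas 3.7 and 3.9]{BNPR}'', and moves on. Your proposal ends by invoking exactly the same citation, so your approach and the paper's coincide. (One slip in your accompanying sketch: from $\phi''-c\phi'<0$ one gets that $e^{-ct}\phi'(t)$ is \emph{decreasing}, not increasing, since $(e^{-ct}\phi')'=e^{-ct}(\phi''-c\phi')$; the monotonicity near $-\infty$ can still be extracted, but the inequality runs the other way.)
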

In fact, it is easy to see that each non-trivial non-negative profile should be positive: 
\begin{lem} \label{po} Let a non-negative bounded  $\phi \not\equiv 0$ solve either (\ref{twe2m}) or (\ref{twe2an}) and $c \geq 2$. Then 
$$\phi(t) >0,\quad - \phi'(t)/ \phi(t) > -\lambda(c)
.$$  
If, in addition,  $\phi(-\infty)=0,$
$\phi(t) \leq 1, \ t \in \R,$ then  $\phi'(t) >0$ for all $t \in \R$ and  $\phi(+\infty) = 1$. 
\end{lem}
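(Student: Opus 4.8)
The plan is to split the statement into three assertions and prove them in the order they are stated: (i) positivity $\phi(t)>0$; (ii) the gradient bound $-\phi'(t)/\phi(t)>-\lambda(c)$, equivalently $\phi'(t)+\lambda(c)\phi(t)>0$; (iii) under the extra hypotheses $\phi(-\infty)=0$ and $\phi\le 1$, strict monotonicity $\phi'(t)>0$ and the limit $\phi(+\infty)=1$. Both equations \eqref{twe2m} and \eqref{twe2an} have the common form $\phi''-c\phi'+F(t)=0$ with $F(t)=g_\beta(\phi(t))(1-(\phi*K)(t))$ in one case and $F(t)=\phi(t)(1-(\phi*K)(t))$ in the other; I will try to argue simultaneously, invoking Lemma \ref{beA} where the truncation $g_\beta$ matters so that $g_\beta(\phi(t))\ge 0$ and $\le \phi(t)$, and recalling that a bounded $\phi$ has bounded $\phi',\phi''$ by the equation (so all one-sided limits that appear are finite or can be extracted along sequences).

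For (i), suppose $\phi(t_*)=0$ for some $t_*$. Since $\phi\ge 0$, $t_*$ is a global minimum, so $\phi'(t_*)=0$ and $\phi''(t_*)\ge 0$; but the equation gives $\phi''(t_*)=c\phi'(t_*)-F(t_*)$, and $F(t_*)=0$ because the factor $\phi(t_*)$ (resp. $g_\beta(0)=0$) vanishes, hence $\phi''(t_*)=0$ as well. One then argues by a uniqueness/comparison or a Gronwall-type estimate for the linear second-order ODE that a nonnegative solution touching zero with zero derivative and using $|F(t)|\le C\phi(t)$ (valid on a neighbourhood since $\phi$ is bounded and, by Lemma \ref{beA}, $0\le g_\beta(\phi)\le\phi$) must vanish identically near $t_*$, and then on all of $\R$ by a connectedness argument, contradicting $\phi\not\equiv 0$. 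For (ii), set $w(t):=\phi'(t)+\lambda(c)\phi(t)$. Using $\lambda(c)+\mu(c)=c$, $\lambda(c)\mu(c)=1$, rewrite the equation as $(\phi'-\mu(c)\phi)'+\lambda(c)(\phi'-\mu(c)\phi)=\big(\lambda\mu-1\big)\phi\,+\,g_\beta(\phi)\big((\phi*K)-1\big)\cdots$ — more cleanly, factor the linear part: $\phi''-c\phi'+\phi = (D-\lambda)(D-\mu)\phi+(1-\lambda\mu)\phi$ with $D=d/dt$ — so that $w$ satisfies a first-order linear inequality $w'-\mu(c)w = -F(t)+(\text{sign-definite junk})\le 0$ because $F(t)=g_\beta(\phi)(1-\phi*K)\le g_\beta(\phi)\le\phi$ together with $\phi''-c\phi'+\phi\ge \phi''-c\phi'+F$ when $\phi*K\ge 0$; integrating this linear differential inequality from $-\infty$ (where $w\to 0^+$ along a sequence, or is controlled via boundedness and the exponential weight $e^{-\mu(c)t}$) forces $w>0$ everywhere — the strictness coming from $\phi\not\equiv 0$ and the fact that $w$ cannot have an interior zero without vanishing identically.

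For (iii), with $\phi\le 1$ we have $1-(\phi*K)(t)\ge 0$ for all $t$ (since $K\ge 0$, $|K|_1=1$), so $F(t)\ge 0$, i.e. $\phi''-c\phi'\le 0$, which means $e^{-ct}\phi'(t)$ is nondecreasing; combined with $\phi(-\infty)=0$, $\phi$ bounded, a standard argument (if $\phi'(t_0)\le 0$ then $\phi'(t)\le \phi'(t_0)e^{c(t-t_0)}$ for $t<t_0$, forcing $\phi$ to be unbounded below or to have wrong sign at $-\infty$) yields $\phi'(t)>0$ for all $t$; hence $\phi$ is increasing and $\phi(+\infty)=:L\in(0,1]$ exists. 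Passing to the limit in the integrated equation (or using that $\phi',\phi''$ then tend to $0$ and $(\phi*K)(+\infty)=L$ by dominated convergence) gives $L(1-L)=0$, so $L=1$. The main obstacle I anticipate is the rigorous treatment of the behaviour at $-\infty$ in steps (i) and (ii): making precise that a bounded nonnegative solution that is either identically zero or strictly positive, and extracting the correct one-sided limit of $w(t)=\phi'+\lambda(c)\phi$ at $-\infty$ — this likely requires combining Lemma \ref{bebe} (which gives $\phi(-\infty)=0$ with local monotonicity whenever $\phi$ comes close to $0$ at $-\infty$) with the dichotomy "$\phi$ is bounded away from $0$ at $-\infty$" versus "$\phi(t_n)\to 0$", and a careful Gronwall estimate; the rest is routine ODE comparison.
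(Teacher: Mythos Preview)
Your plan for (i) is fine and matches the paper's argument (the paper writes $\phi$ as a solution of the \emph{linear} ODE $y''-cy'+a(t)y=0$ with bounded coefficient $a$ and invokes the uniqueness theorem at the point where $y=y'=0$). The real problems are in (ii) and (iii).

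In (ii) you have misread the inequality: $-\phi'/\phi>-\lambda(c)$ is equivalent to $\phi'(t)-\lambda(c)\phi(t)<0$, \emph{not} to $\phi'+\lambda\phi>0$. So your function $w=\phi'+\lambda\phi$ is aimed at the wrong target, and the vague computation that follows cannot recover the correct statement. The right factorisation is to set $v:=\phi'-\lambda\phi$; then $v'-\mu v=\phi''-c\phi'+\phi=\mathcal N(\phi)$, where $\mathcal N(\phi)(t)=g_\beta(\phi(t))(\phi*K)(t)+\phi(t)-g_\beta(\phi(t))>0$ (using $0\le g_\beta(u)\le u$ and $\phi,\phi*K>0$). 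Since $\phi,\phi'$ are bounded, $e^{-\mu t}v(t)\to 0$ as $t\to+\infty$; combined with $(e^{-\mu t}v)'>0$ this forces $v<0$ everywhere. This is exactly the paper's argument, which it phrases via the Green-function representation $\phi(t)=(\mu-\lambda)^{-1}\int_t^\infty(e^{\lambda(t-s)}-e^{\mu(t-s)})\mathcal N(\phi)(s)\,ds$ and then computes $\phi'-\lambda\phi=-\int_t^\infty e^{\mu(t-s)}\mathcal N(\phi)(s)\,ds<0$ directly. There is no need to integrate from $-\infty$ or to invoke Lemma~\ref{bebe}.

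In (iii) your ``standard argument'' has a genuine gap. From $\phi''-c\phi'\le 0$ one gets that $e^{-ct}\phi'(t)$ is \emph{nonincreasing}, hence for $t>t_0$ one has $\phi'(t)\le \phi'(t_0)e^{c(t-t_0)}$ (your inequality is stated for $t<t_0$, the wrong side). If $\phi'(t_0)<0$ this indeed forces $\phi'(t)\to-\infty$, a contradiction. But if $\phi'(t_0)=0$ you only get $\phi'(t)\le 0$ for $t\ge t_0$, and then either $\phi'<0$ somewhere later (handled) or $\phi'\equiv 0$ on $[t_0,\infty)$, i.e.\ $\phi$ is constant there. This degenerate case is not excluded by your argument, and it is precisely where the work lies: the paper shows that then $(\phi*K)\equiv 1$ on $[t_0,\infty)$, hence $\phi\equiv 1$ there and $K|_{\R_+}=0$ a.e., and finally derives a contradiction from the integral representation~\eqref{re} by comparing $\phi$ with the constant solution $1$ on $(t_0-\delta,t_0)$. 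Your limit argument ``$L(1-L)=0$'' presupposes that $\phi'>0$ has already been established, so it cannot be used to close this case.
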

\begin{proof} First, notice that equation (\ref{twe2m})  with $\beta=+\infty$ coincides with  (\ref{twe2an}), 
so it suffices to consider  equation (\ref{twe2m}) allowing $\beta=+\infty$. 
Suppose that, for some $s$,  solution $\phi$ of (\ref{twe2m}) satisfies  $\phi(s)=0$. Since $\phi(t) \geq 0, \ t \in \R,$ this 
yields $\phi'(s)=0$. Notice that $y=\phi(t)$ is the solution of the following initial value problem 
for a linear second order ordinary differential equation
$$
y''(t) -cy'(t) +a(t)y(t)=0, \quad y(s)= y'(s) =0,
$$ 
where 
$$
a(t) := \left\{\begin{array}{cc} 1 - (\phi * K)(t),&0 \leq\phi(t) \leq \beta, \\  \frac{g_{\beta}(\phi(t))}{\phi(t)} (1-( \phi * K) (t))  ,
& \phi(t)>  \beta,
\end{array}\right.
$$
is a continuous bounded function. 
But then $y(t)  \equiv 0$ due to the uniqueness theorem, a contradiction. 

Suppose now that $\phi$  satisfies (\ref{twe2m}) and $c> 2$. 
Set $${\mathcal N}(\phi)(t):=g_\beta (\phi(t))
(\phi * K)(t)+ \phi(t)- g_\beta (\phi(t)),$$ then ${\mathcal N}(\phi)(t)> 0$ and 
\begin{equation}\label{re}
\phi(t) = \frac{1}{\mu-\lambda}
\int_t^{+\infty}(e^{\lambda (t-s)}- e^{\mu
(t-s)}){\mathcal N}(\phi)(s)ds.
\end{equation}
As a consequence, we have that 
$$
\phi'(t) = \frac{1}{\mu-\lambda}
\int_t^{+\infty}(\lambda e^{\lambda (t-s)}-\mu  e^{\mu
(t-s)}){\mathcal N}(\phi)(s)ds,
$$
and therefore
$$
\phi'(t) -\lambda \phi(t) = - \int_t^{+\infty}e^{\mu
(t-s)}{\mathcal N}(\phi)(s)ds < 0. 
$$ 
If now $c=2$, we find similarly that 
$$
\phi(t) =
\int_t^{+\infty}(s-t)e^{t-s}{\mathcal N}(\phi)(s)ds,\quad \phi'(t) =
\int_t^{+\infty}(s-t-1)e^{t-s}{\mathcal N}(\phi)(s)ds,
$$
and thus also 
$$
\phi'(t) -\phi(t)=  -\int_t^{+\infty}e^{
t-s}{\mathcal N}(\phi)(s)ds < 0. 
$$
Finally, $0 < \phi(t) \leq 1, \ t \in \R,$ implies  that $\phi''(t) - c\phi'(t) \leq 0$.  As a consequence, $ \phi'(s) \geq \phi'(t)e^{c(s-t)}, \  s \leq t$, so that there either  exists  
 a sequence $t_n\to +\infty$ such that $\phi'(t_n) >0$, or  there exists  the leftmost $T_1 \in \R\cup \{+\infty\}$ such that 
$\phi'(t) \leq0$ for all $t \geq T_1$.  In the first case, $\phi'(t) >0, \ t \in \R,$ while in the second case $\phi(t)$ is non-increasing  and $\phi''(t) \leq c\phi'(t) \leq  0,$ for $ \ t \geq T_1$. Since $\phi(t) > 0$,  this can only happen when $\phi(t) \equiv \phi(T_1)$ for $ \ t \geq T_1$.   {But then $( \phi * K) (T_1)=1$, which implies  $\phi(t) =1$ for  $t \geq T_1$ and $K(s) = 0$ a.e. on $\R_+$. }Ê Furthermore, $\phi'(s) >0$ for $s < T_1$. Now, observe that both $\phi(t)$ and $1$ satisfy equation (\ref{re}) and that 
${\mathcal N}(\phi)(t)={\mathcal N}(1)(t)=1$ for all $t \geq T_1$ and ${\mathcal N}(\phi)(t)= \phi(t)
(\phi * K)(t)$ for $t \leq T_1$.  Therefore (\ref{re}) implies that, for $t<T_1$ close to $T_1$, 
$$
0< 1-\phi(t) = \frac{1}{\mu-\lambda}
\int_t^{T_1}(e^{\lambda (t-s)}- e^{\mu
(t-s)})(1-\phi(s)
(\phi * K)(s))ds\leq 
$$
$$
 \frac{1}{\mu-\lambda}
\int_t^{T_1}(e^{\lambda (t-s)}- e^{\mu
(t-s)})ds(1-\phi^2(t)) = (t-T_1)^2(0.5+o(1))(1-\phi(t))(1+\phi(t)), \ t \to T_1-, 
$$
a contradiction. 
\qed
\end{proof}

\begin{lem} \label{li} Let a positive bounded  $\phi$  solve (\ref{twe2m}) 
and there exists the limit 
$\phi(+\infty)$. Then $\phi(+\infty) \in \{1,2 \beta\}.$    If  $\phi(+\infty) =2\beta$ then $\phi(t) \equiv 2\beta$ on some maximal nonempty interval $[T_1, + \infty)$ and $(\phi * K)(T_1) \leq 1$. Furthermore, if $ {2}\beta \int_{-\infty}^0K(s)ds >1$ then 
 $\phi(+\infty) =1$.
\end{lem}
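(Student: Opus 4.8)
The plan is to pin down $L:=\phi(+\infty)$ by an asymptotic analysis of (\ref{twe2m}) at $+\infty$, then to handle the exceptional value $L=2\beta$ by a monotonicity argument, and finally to deduce the last assertion from a lower estimate of $(\phi*K)(T_1)$. Throughout I would assume $\phi$ is non-constant (the statement tacitly requires this). Two preliminary reductions are in order: first, exactly as in the opening of the proof of Lemma~\ref{beA} — on any connected component of $\{t:\phi(t)>2\beta\}$ equation (\ref{twe2m}) reduces to $\phi''=c\phi'$, and boundedness of $\phi$ together with continuity at the endpoints of that component yields a contradiction — one gets $\phi(t)\le 2\beta$ for all $t$, hence $0\le L\le 2\beta$; second, as in the proof of Lemma~\ref{po}, $\phi'$ and therefore $\phi''$ are bounded on $\R$.

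\emph{Step 1: $L\in\{1,2\beta\}$.} Since $\phi$ is bounded and $\phi(t-y)\to L$ for each fixed $y$, dominated convergence gives $(\phi*K)(t)\to L$, so $Q(t):=-g_\beta(\phi(t))(1-(\phi*K)(t))\to-g_\beta(L)(1-L)$ as $t\to+\infty$. I would rewrite (\ref{twe2m}) as $(\phi')'-c\phi'=Q$; discarding the exponentially growing mode by boundedness of $\phi'$ yields $\phi'(t)=-\int_0^{+\infty}e^{-cu}Q(t+u)\,du$, and a second application of dominated convergence gives $\phi'(t)\to c^{-1}g_\beta(L)(1-L)$. This limit must be $0$, because $\phi$ has a finite limit at $+\infty$; hence $g_\beta(L)(1-L)=0$ and $\phi''=c\phi'+Q\to0$. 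Since $L\in[0,2\beta]$ and $g_\beta$ vanishes on $[0,2\beta]$ only at $0$ and $2\beta$, this forces $L\in\{0,1,2\beta\}$; and $L=0$ is impossible, because for large $t$ one has $\phi(t)<\beta$ and $(\phi*K)(t)<1$, whence ${\mathcal N}(\phi)(t)=\phi(t)(\phi*K)(t)$, so the representation (\ref{re}) of the proof of Lemma~\ref{po} (and its $c=2$ counterpart) gives $\sup_{t\ge T}\phi(t)\le\theta_T\sup_{t\ge T}\phi(t)$ with $\theta_T\to0$, forcing $\phi\equiv0$ near $+\infty$ against positivity. Hence $L\in\{1,2\beta\}$.

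\emph{Step 2: the case $L=2\beta$.} Then $\phi(t)\in(\beta,2\beta]$ for all large $t$, so $w:=2\beta-\phi=g_\beta(\phi)\ge0$ and (\ref{twe2m}) reads $w''-cw'=w\,(1-(\phi*K))$. As $(\phi*K)(t)\to2\beta>1$, eventually $1-(\phi*K)(t)<0$, hence $(e^{-ct}w'(t))'\le0$; since $w'$ is bounded, $e^{-ct}w'(t)\to0$, so this non-increasing quantity stays $\ge0$, that is $w'\ge0$ for large $t$; being non-decreasing, non-negative and tending to $0$, $w$ vanishes identically for large $t$, so $\phi\equiv2\beta$ on a maximal interval $[T_1,+\infty)$ with $T_1\in\R$. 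To obtain $(\phi*K)(T_1)\le1$ I would argue by contradiction: if $(\phi*K)(T_1)>1$, then by continuity $(\phi*K)(t)>1$ and $\phi(t)\in(\beta,2\beta]$ on some $(T_1-\epsilon_0,T_1]$; there, $w''-cw'=w(1-(\phi*K))<0$ wherever $w>0$, while wherever $w=0$ the point is a maximum of $\phi$ and, since $g_\beta(2\beta)=0$, the equation gives $\phi'=\phi''=0$, so $w''-cw'=0$; thus $w''-cw'\le0$ throughout $(T_1-\epsilon_0,T_1]$, so $e^{-ct}w'(t)$ is non-increasing there and vanishes at $T_1$ (because $\phi'(T_1)=0$), hence $w'\ge0$ and $w$ is non-decreasing with $w(T_1)=0$, so $w\equiv0$ on $(T_1-\epsilon_0,T_1]$ — contradicting the maximality of $[T_1,+\infty)$. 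Finally, if $2\beta\int_{-\infty}^0K(s)\,ds>1$ and $\phi(+\infty)=2\beta$, then since $\phi(T_1-y)=2\beta$ for all $y\le0$ and $\phi\ge0$, $(\phi*K)(T_1)\ge2\beta\int_{-\infty}^0K(y)\,dy>1$, contradicting the bound just established; so $\phi(+\infty)=1$.

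I expect the local analysis at the left endpoint $T_1$ (the derivation of $(\phi*K)(T_1)\le1$) to be the main obstacle. The equation is degenerate at $T_1$ because the factor $g_\beta(2\beta)=0$ annihilates the convolution term, so one cannot simply evaluate it there; the device is to transplant the ``$e^{-ct}w'$ is monotone'' argument used in the genuinely asymptotic regime onto a one-sided neighbourhood of $T_1$ on which $\phi*K>1$. Everything else reduces to standard asymptotic ODE estimates and dominated-convergence arguments.
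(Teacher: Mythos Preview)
Your proof is correct and follows essentially the same skeleton as the paper's: identify the possible limits via the asymptotics of the forcing term, then use the inequality $\phi''-c\phi'\ge 0$ on a terminal ray to force $\phi\equiv 2\beta$ there, and finish with the lower bound for $(\phi*K)(T_1)$.

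There are two places where your argument genuinely differs from the paper's. First, to exclude $L=0$ the paper simply invokes Lemma~\ref{bebe} (which already gives $\liminf_{t\to+\infty}\phi(t)>0$ under the standing assumption $\phi(-\infty)=0$), whereas you give a self-contained contraction argument via the integral representation~(\ref{re}); your argument is valid and has the advantage of not needing the hypothesis $\phi(-\infty)=0$. Second, for the bound $(\phi*K)(T_1)\le 1$ the paper appeals to Lemma~\ref{beA}, which obtains this by approaching $T_1$ through a sequence $t_j\uparrow T_1$ with $\phi'(t_j)>0$, $\phi''(t_j)<0$, $\phi(t_j)<2\beta$ and reading off the inequality from the equation at $t_j$. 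Your alternative---assuming $(\phi*K)(T_1)>1$ and transporting the monotonicity of $e^{-ct}w'$ to a left neighbourhood of $T_1$, forcing $w\equiv 0$ there and contradicting maximality---is a clean substitute that avoids the auxiliary lemma and the implicit use of $\phi(-\infty)=0$. Both approaches buy the same conclusion; yours is slightly more self-contained.
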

\begin{proof} It follows from Lemmas \ref{beA} and \ref{bebe} that $\phi(+\infty) \in (0, 2\beta]$. In addition, 
if $\phi(+\infty) \not\in \{{1}, 2\beta\},$ then for
$$
r(t): = g_\beta (\phi(t))(1-
(\phi * K)(t)),   
$$
we have that 
$$
\lim_{t \to +\infty}r(t)   =  g_\beta (\phi(+\infty))(1-
\phi(+\infty))   \not= 0.
$$

However,  in this case  the differential equation 
$\phi''(t)- c\phi'(t) + r(t) =0$ does not have any 
convergent solution on $\R_+$. Indeed,  we have that  
$$
|\phi'(t)| = \left|\phi'(s) + c(\phi(t)-\phi(s)) -\int_s^tr(u)du\right| \to +\infty \ \mbox{as} \ t \to +\infty. 
$$
Finally, assume that $\phi(+\infty) = 2\beta$, then  there exists $T_1 \in \R$ such that  $r(t)\leq 0$ for $t\in [T_1, \infty)$  and thus
$\phi''(t) -c\phi'(t) \geq 0$  for $t \geq T_1$. As a consequence, $\phi'(t) \geq \phi'(s)e^{c(t-s)}$ for $t\geq s \geq T_1$. If $\phi'(s) >0$ for some $s \geq T_1$, we obtain a contradiction:
$\phi'(+\infty) = +\infty$. Therefore we have to analyse the case when $\phi'(s) = 0$ for all $s \geq T_1$
(we can assume that $T_1$ is the smallest number with such a property).  By Lemma \ref{beA}, 
$${2}\beta \int_{-\infty}^0K(s)ds =\int_{-\infty}^0\phi(T_1-s)K(s)ds \leq (\phi * K)(T_1) \leq 1,$$ 
which proves the last statements of the lemma. \qed
\end{proof}
\begin{remark}\label{ir}
Suppose that  $\int_{-\infty}^0K(s)ds > 0$. Then we can choose $\beta$ large enough to meet the inequality  $ {2}\beta \int_{-\infty}^0K(s)ds >1$. Hence, if $\int_{-\infty}^0K(s)ds > 0$ and there exists $\phi(+\infty)$,   we can assume that   $\phi(+\infty) =1$.
\end{remark}

\vspace{3mm}

Now, the change of variables 
{\begin{equation}\label{x}
\phi(t) = e^{-x(t)}, \;\;i.e.\;\; x(t) = -\ln \phi(t),
\end{equation}}
 transforms equation  (\ref{twe2an})  into  
$$
x''(t) - cx'(t) - (x'(t))^2 +\left(\int_{-\infty}^{+\infty}e^{-x(t-s)}K(s)ds-1\right)=0, \ t \in \R. 
$$
We will also consider the  family of equations 
$$
x''(t) - cx'(t) - (x'(t))^2 +h_\beta(x(t))\left(\int_{-\infty}^{+\infty}e^{-x(s)}K(t-s)ds-1\right)=0, \ t \in \R, 
$$
where non-decreasing continuous function $h_\beta: \R \to [0, +\infty)$,\ $\beta >1,$ is defined by 
$$h_\beta(x)=\left\{\begin{array}{cc} 1,& x  \geq - \ln \beta, \\  \max\{0,2\beta e^x -1\},
& x\leq  -\ln \beta.\end{array}\right. $$  
For $c \geq 2$,  we will consider a strictly increasing function $f: [-1, \infty) \to \R$,
$$
f(s) := \frac{2s}{c+\sqrt{c^2+4s}}.
$$
\begin{lem} \label{ogran} For each  $c\geq 2$ and $K \in \mathcal{K}:= \{K\in L^1(\R,\R_+): |K|_1=1, K\geq 0\}$ there exists  $U(c,K) \geq 1$ depending only on $c$ and $K$ such that the following holds: if $\phi(t)$, $\phi(-\infty) =0$,  is a positive bounded  solution of  equation (\ref{twe2m}) with $\beta > U(c,K)$,  then 
\begin{equation}\label{iu}
0< \phi(t) \leq U(c,K), \ t \in \R
\end{equation}
(i.e. the set of all semi-wavefronts to (\ref{twe2m})  is uniformly bounded by a constant which does not depend on a particular semi-wavefront). Moreover, given a fixed pair $(c_0,K_0) \in [2,+\infty) \times \mathcal{K}$,  we can assume that  the map 
$U: [2,+\infty) \times \mathcal{K}  \to (0,+\infty)$ is locally continuous at $(c_0,K_0)$. 
\end{lem}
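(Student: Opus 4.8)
The plan is to assemble $U(c,K)$ from two complementary a priori estimates and then close a short bootstrap in the truncation parameter $\beta$. Put $P^*:=\sup_{t\in\R}\phi(t)<\infty$; it is enough to bound $P^*$ by some $U(c,K)\ge 1$ depending only on $(c,K)$, and I will arrange $U(c,K)$ so that the hypothesis $\beta>U(c,K)$ already rules out $P^*\ge 2\beta$. Since $\phi(-\infty)=0$, Lemma \ref{bebe} gives that $\phi$ rises monotonically from $0$ near $-\infty$. If $\phi$ is globally monotone, then $\phi(+\infty)$ exists and by Lemma \ref{li} it equals $1$ or $2\beta$, the value $2\beta$ being excluded once $\beta$ is large (apply the rate estimate (i) below at the left endpoint of the interval on which $\phi\equiv 2\beta$: the resulting convolution inequality forces $2\beta<\big(\int_0^{\infty}K(y)e^{-\lambda(c)y}dy+\int_{-\infty}^0 K(y)dy\big)^{-1}$); hence $P^*=1$. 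Otherwise $P^*$ is attained at, or approached along, local maxima $t_0$ of $\phi$, and there Lemma \ref{beA} yields $(\phi*K)(t_0)\le 1$.

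The core step is to estimate $\phi(t_0)$ from $(\phi*K)(t_0)\le 1$ using two pointwise bounds whose constants depend only on $c$. (i) By Lemma \ref{po}, $(\ln\phi)'(t)<\lambda(c)$ for all $t$, so $\phi(t_0-y)>\phi(t_0)e^{-\lambda(c)y}$ for every $y\ge 0$. (ii) The function $y(t):=-\phi'(t)/\phi(t)$ satisfies $y(t)>-\lambda(c)$ everywhere (Lemma \ref{po}), $y(t_0)=-\phi'(t_0)/\phi(t_0)\le 0$, and a direct computation from (\ref{twe2m}) gives $y'(t)\le y(t)^2+cy(t)+1=(y(t)+\lambda(c))(y(t)+\mu(c))$ (at a maximum of $y$ one moreover gets $y\le f(P^*-1)$, with $f(s)=2s/(c+\sqrt{c^2+4s})$ as above); comparing $y$ on $[t_0,+\infty)$ with the solution of $\dot z=(z+\lambda(c))(z+\mu(c))$ through $z(t_0)=y(t_0)$, which increases and blows up no sooner than a time $T_0(c)>0$ later, one extracts $D(c)>0$ and $\gamma(c)\in(0,1)$, continuous in $c\ge 2$, with $\phi(t_0+s)\ge\gamma(c)\phi(t_0)$ for $0\le s\le D(c)$. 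Writing $(\phi*K)(t_0)=\int_0^{\infty}K(y)\phi(t_0-y)dy+\int_{-\infty}^0 K(y)\phi(t_0-y)dy\le 1$ and feeding (i) into the first integral and (ii) into the second gives
$$\phi(t_0)\Big(\int_0^{\infty}K(y)e^{-\lambda(c)y}\,dy+\gamma(c)\int_{-D(c)}^0 K(y)\,dy\Big)\le 1,$$
so $\phi(t_0)\le U_1(c,K):=\big(\int_0^{\infty}K(y)e^{-\lambda(c)y}dy+\gamma(c)\int_{-D(c)}^0 K(y)dy\big)^{-1}$. For every kernel whose restriction to $[-D(c),+\infty)$ is non-trivial, this denominator is strictly positive and, by dominated convergence for the $K$-integrals together with continuity of $\lambda,\mu,\gamma,D$ in $c$, the quantity $U_1$ is locally continuous in $(c,K)$.

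It remains to handle the kernels whose mass is carried essentially entirely by $(-\infty,-D(c))$, where the single-maximum estimate degenerates. Here I would iterate the maximum-principle relations $(\phi*K)(\cdot)\le 1$ at maxima and $(\phi*K)(\cdot)\ge 1$ at minima along the profile: a near-maximal value $\phi(t_0)$ forces, via these relations, large values of $\phi$ on a portion of the profile at distance comparable to $\mathrm{dist}(0,\mathrm{supp}\,K)$ from $t_0$, and, through (\ref{twe2m}) and the rate bounds above, this in turn forces an even larger value at the maximum; since $\phi$ is bounded, this self-reinforcement is consistent only if $P^*$ is controlled, which delivers an explicit bound $P^*\le U_2(c,K)$ depending only on $(c,K)$ (alternatively one uses an $L^2$-energy estimate in the spirit of \cite{AC}). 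Set $U(c,K):=\max\{1,U_1(c,K),U_2(c,K)\}$. If $P^*\ge 2\beta$, the core estimate applied at the first point with $\phi=2\beta$ (or along maxima tending to $2\beta$) gives $2\beta\le\max\{U_1,U_2\}<\beta$, a contradiction; hence $P^*<2\beta$, the above estimates apply, and $P^*\le U(c,K)$, which is (\ref{iu}). Local continuity of $U$ at any $(c_0,K_0)$ follows by gluing: off the degenerate stratum $U_1$ is continuous and dominates, near it $U_2$ takes over and is continuous, and a partition-of-unity patch (or the stability of the maximum of finitely many locally continuous functions) yields continuity of $U$.

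The main obstacle is precisely the class of kernels with mass escaping to $-\infty$ — already containing the Dirac kernel far out on the negative axis used in Theorem \ref{Te2}: the transparent maximum argument collapses exactly where the substitute must take over, so one must run the self-reinforcement iteration (or an energy estimate) while keeping every constant a function of $(c,K)$ only, and then splice the two regimes into a single globally defined, locally continuous $U$. The remaining technical points — making the near-maximum limiting arguments precise when $P^*$ is only a $\limsup$ at $+\infty$, and verifying the $\beta$-bootstrap — are routine.
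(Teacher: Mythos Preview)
Your treatment of the case where $K$ has some mass on $[0,\infty)$ is essentially the paper's: the bound $(\ln\phi)'<\lambda(c)$ plugged into $(\phi*K)(t_0)\le 1$ at a maximum gives exactly the paper's formula $U(c,K)=\big(\int_0^{+\infty}e^{-\lambda(c)s}K(s)\,ds\big)^{-1}$. Your forward estimate (ii) is a nice refinement, but it cannot carry the hard case on its own, and that is where the real gap lies.

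The comparison in (ii) only controls $\phi$ on $[t_0,t_0+D(c)]$ with $D(c)$ strictly bounded by the blow-up time of $\dot z=(z+\lambda)(z+\mu)$, a quantity depending on $c$ alone. For kernels whose mass sits entirely in $(-\infty,-D(c))$ --- and this class is nonempty for every $c$, containing in particular the advanced Dirac kernels of Theorem~\ref{Te2} --- your $U_1$ degenerates. You acknowledge this and propose either a ``self-reinforcement iteration'' of the maximum/minimum convolution relations or an $L^2$-energy estimate \`a la \cite{AC}. Neither is an argument: the iteration is not written down and it is not clear it closes (each step feeds the unknown $P^*$ back into the constants), while the Alfaro--Coville energy method establishes convergence given a bound on $|\phi|_\infty$, not the bound itself. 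There is also a bootstrap wrinkle in (ii): the Riccati identity $y'=y^2+cy+(1-\phi*K)$ you use is only valid where $g_\beta(\phi)=\phi$, i.e.\ where $\phi\le\beta$, which is exactly what has not yet been secured on $[t_0,t_0+D(c)]$.

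The paper closes this case with a different, concrete mechanism. When $\int_0^{+\infty}K<0.001$, pick $r=r(K)$ with $\int_{-r}^0K>0.99$ and a $c$-dependent $\sigma>0$, and set $U(c,K)=2e^{\lambda(c)(r+\sigma)}$. If $\phi(t_0)>U(c,K)$, the same logarithmic-derivative bound $(\ln\phi)'<\lambda(c)$ forces $\phi\ge 2$ on an interval $[a,b]\ni t_0$ of length at least $r+\sigma$. Then for $t\in[a,a+\sigma]$ the convolution samples $\phi$ on $[t,t+r]\subset[a,b]$, so $(\phi*K)(t)\ge 2\cdot 0.99>1$ and hence $\phi''-c\phi'>0$ there. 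Integrating this linear inequality and comparing with the exponential bound $\phi\le 2e^{\lambda(t-a)}$ pins $\phi'(a)$ below $0.01$; pushing the same argument back to the larger interval where $\phi\ge 1.1$ then forces $\phi>1.9$ at its left endpoint, contradicting $\phi=1.1$ there. The point is that the reach of the estimate is governed by $r(K)$, not by a $c$-only constant, and this is precisely why $U(c,K)$ must grow with the support of $K$.
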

\begin{proof} 
First, we take $U(c,K) \geq 1$ defined by one of the following non-exclusive formulas: 
\begin{itemize}
\item  if $\int_{0}^{+\infty}K(s)ds >0$, then $U(c,K)= \left(\int_{0}^{+\infty}e^{s f(-1)}K(s)ds\right)^{-1}$;
\item if $\int_{0}^{+\infty}K(s)ds < 0.001$, then $U(c,K)= 2\exp{(\lambda (r+\sigma))},$ where  $\lambda= \lambda(c)$ is defined by (\ref{E}) and $r = r(K) \in \N,\ \sigma=\sigma(c)>0$ are chosen in such a way that $$\int^{0}_{-r}K(s)ds> 0.99, \quad 
2c\frac{e^{\lambda \sigma} -1}{e^{c \sigma} -1} < 0.01.$$ 
\end{itemize}
Obviously,  such $U: [2,+\infty) \times \mathcal{K}  \to (0,+\infty)$ is locally continuous at each $(c_0,K_0)$.  For example, $U(c,K)$ can be considered as a constant (hence, continuous) function in some small neighborhood of $(c_0,K_0) \in [2,+\infty) \times  \mathcal{K}$ satisfying $\int_{0}^{+\infty}K_0(s)ds =0$.

Clearly, if $\phi(t) \in (0,1]$ for all $t \in \R$, then inequality (\ref{iu}) is true because of  $U(c,K)\geq 1$. In particular, this happens if 
the profile  $\phi(t)$ is {nondecreasing}  and  $2\beta\int_{-\infty}^0K(s)ds > 1$, see Remark \ref{ir}.

Thus let us suppose that $\phi(t_0) > 1$ at some point $t_0$.  Then at least one of the following three possibilities can occur: 

\noindent \underline{Situation I}. Solution $\phi(t)$ is {nondecreasing}  and  $\int_{-\infty}^0K(s)ds = 0$ (so that  $\int_{0}^{+\infty}K(s)ds =1$).  In such a case, by Lemma \ref{li}, there is  some finite $T_1$ such that 
$\phi(+\infty)=\phi(T_1)=2\beta$ and $(\phi * K)(T_1) \leq 1$. {For $x$ defined by (\ref{x})}, we have  $ x'(t) = -\phi'(t)/\phi(t) \geq -\lambda(c) = f(-1)$  for all $t\in \R$ 
and 
$$
\int_{0}^{+\infty}e^{-x(T_1-s)}K(s)ds \leq (\phi * K)(T_1)= \int_{\R}e^{-x(T_1-s)}K(s)ds \leq 1.
$$
Now, set $m:= \min_{s\in \R} x(s)$ and observe that $x(t) = x(T_1)-\int^{T_1}_t x'(s)ds \leq m +f(-1)(t-T_1)$ for $t \leq T_1$.  Thus 
$$
\int_{0}^{+\infty}e^{-m+s f(-1)}K(s)ds \leq \int_{0}^{+\infty}e^{{-}x(T_1-s)}K(s)ds \leq 1
$$
and therefore 
$$
\frac{1}{2\beta} = \frac{1}{\phi(T_1)} = e^m \geq \int_{0}^{+\infty}e^{s f(-1)}K(s)ds. 
$$
Thus we can take 
$$
2\beta= \phi(T_1)  \leq \left(\int_{0}^{+\infty}e^{s f(-1)}K(s)ds\right)^{-1} = U(c,K).
$$
{The latter shows that  Situation I cannot occur if $2\beta > U(c,K)$.}

\vspace{2mm}
 
\noindent  \underline{Situation II}. Solution $\phi(t)$ is not {nondecreasing} 
 { and  $\int_{0}^{+\infty}K(s)ds>0.$} 
Then we can repeat the above arguments to conclude that, for the local maxima $\phi(t_j) >1$ of $\phi$ we have that 
 $$
\sup_{t \in \R}\phi(t) =\sup_j \phi(t_j)  \leq \left(\int_{0}^{+\infty}e^{s f(-1)}K(s)ds\right)^{-1}= U(c,K). 
 $$
\underline{Situation III}. 
{Solution $\phi(t)$ is not {nondecreasing} 
  and $\int_{0}^{+\infty}K(s)ds=0$.} 
Suppose, on the contrary,  that $\phi(t_0) > U(c,K) = 2\exp{(\lambda (r+\sigma))}$ for some $t_0$.  Then $\phi(t) \geq 2$ on some maximal closed interval $[a,b] \ni t_0$.  We claim that $b-a \geq r+\sigma$.  Indeed, otherwise, since $\phi'(t) \leq \lambda \phi(t), $ $\phi(a) =2,$ $ \phi'(a) \geq 0, \ $ $ t_0-a < b- a,$  we get the following contradiction 
$$
\phi(t_0) \leq \phi(a) \exp(\lambda (t_0-a)) < 2\exp(\lambda (b-a)) \leq   2\exp(\lambda (r+\sigma)). 
$$
In consequence, 
$$
(\phi * K)(t)  \geq  \int_{-r}^0 \phi(t-s)K(s)ds \geq 1.98, \quad t \in [a, a+\sigma], 
$$
so that $(1-(\phi * K)(t)) \leq -0.98, \ t \in [a,a+\sigma],$ and $\phi''(t) -c \phi'(t) > 0,$ $\ t \in [a,a+\sigma]$.  In particular, $\phi'(t) > e^{c(t-a)}\phi'(a)$ for all $a  <t \leq a+\sigma$ and thus 
$$
2\exp(\lambda (t-a))  \geq \phi(t) > 2+ \frac 1c \{e^{c(t-a)}-1\}\phi'(a), \quad a  <t \leq a+\sigma. 
$$
Therefore 
$$
2e^{\lambda \sigma} > 2+ \frac 1c \{e^{c\sigma}-1\}\phi'(a), 
$$
so that $0 \leq \phi'(a) <0.01$.  Next, let $[a_-, b_+] \supseteq [a,b]$ be the maximal interval 
where $\phi(t) \geq 1.1$. Then, for all $t \in [a_-, a + \sigma]$, we have $\phi''(t)-c\phi'(t) >0$ since
$$
(\phi * K)(t) \geq  \int_{-r}^0 \phi(t-s)K(s)ds \geq 0.99\cdot 1.1 > 1, \ t \in [a_-, a+\sigma]. 
$$
But then 
$$
\phi'(t) < \phi'(a)e^{c(t-a)} < 0.01e^{c(t-a)}, \ t \in [a_-,a);
$$
$$
\phi(t) > 2- \frac{0.01}{c}\{1-e^{c(t-a)}\} > 2-  \frac{0.01}{c} > 1.9,  \quad t \in [a_-,a), 
$$
a contradiction (since $\phi(a_-) =1.1$). 
\qed
\end{proof}
\begin{cor}
\label{twoe} Assume that $c\geq 2$ and $K$ are fixed.  Then, for each sufficiently large $\beta >1$,  equations (\ref{twe2m}) and (\ref{twe2an}) share the same set of semi-wavefronts propagating at the velocity $c$. 
\end{cor}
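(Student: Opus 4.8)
The plan is to fix the constant $U=U(c,K)\ge 1$ furnished by Lemma~\ref{ogran} and to prove the slightly sharper assertion that \emph{for every} $\beta>U$ the equations (\ref{twe2m}) and (\ref{twe2an}) have exactly the same set of semi-wavefronts of speed $c$; since this is precisely the range of all ``sufficiently large'' $\beta$, the corollary follows. The whole argument rests on the single elementary observation that, by (\ref{G}), $g_\beta(u)=u$ for $u\in[0,\beta]$, in particular for $u\in[0,U]$ whenever $\beta>U$. Hence a function taking values in $[0,U]$ solves (\ref{twe2m}) with such a $\beta$ if and only if it solves (\ref{twe2an}), because the two right-hand sides then coincide pointwise; and the boundary behaviour at $\pm\infty$ entering the definition of a semi-wavefront is the same for both equations.

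For the inclusion ``(\ref{twe2m}) $\Rightarrow$ (\ref{twe2an})'', let $\phi$ be a semi-wavefront of (\ref{twe2m}) with $\beta>U$. Then $\phi$ is $C^2$, non-negative, not identically zero (because $\liminf_{t\to+\infty}\phi(t)>0$), satisfies $\phi(-\infty)=0$, and is bounded on $\R$ (it tends to $0$ at $-\infty$ and has finite $\limsup$ at $+\infty$). By Lemma~\ref{po} (here $c\ge2$) $\phi$ is strictly positive, so Lemma~\ref{ogran} applies and gives $0<\phi(t)\le U<\beta$ for all $t$. Consequently $g_\beta(\phi(t))=\phi(t)$ everywhere, so (\ref{twe2m}) reduces to (\ref{twe2an}); the asymptotics are unchanged, so $\phi$ is a semi-wavefront of (\ref{twe2an}).

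For the reverse inclusion let $\phi$ be a semi-wavefront of (\ref{twe2an}), again $C^2$, non-negative, non-trivial, with $\phi(-\infty)=0$ and $M:=\sup_{\R}\phi<\infty$; by Lemma~\ref{po}, $\phi>0$. Since Lemma~\ref{ogran} is stated for (\ref{twe2m}) and not directly for (\ref{twe2an}), I first realise $\phi$ as a solution of (\ref{twe2m}) with an auxiliary large parameter: put $\beta':=\max\{M,U\}+1$, so that $\beta'>1$ and $\phi(t)\le M<\beta'$ for all $t$; then $g_{\beta'}(\phi(t))=\phi(t)$, and $\phi$ is a positive bounded solution of (\ref{twe2m}) with parameter $\beta'>U$ satisfying $\phi(-\infty)=0$. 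Lemma~\ref{ogran} now yields the \emph{uniform} bound $0<\phi(t)\le U$ for all $t$. Therefore, for \emph{any} $\beta>U$ we have $\phi(t)\le U<\beta$, hence $g_\beta(\phi(t))=\phi(t)$, hence $\phi$ solves (\ref{twe2m}) with that $\beta$; the boundary behaviour being unchanged, $\phi$ is a semi-wavefront of (\ref{twe2m}). Combining the two inclusions proves the claim for every $\beta>U$.

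The proof is short, and I do not foresee a genuine obstacle; the only point requiring a little care is precisely this two-step bootstrap in the reverse inclusion. One cannot invoke Lemma~\ref{ogran} on (\ref{twe2an}) directly, so one first embeds the given semi-wavefront into (\ref{twe2m}) with a parameter larger than its own supremum (which is legitimate since $g_{\beta'}$ is then the identity on the range of $\phi$), and only afterwards does Lemma~\ref{ogran} collapse its range into $[0,U]$ — a bound independent of $\beta$ — thereby making $\phi$ a solution of (\ref{twe2m}) for all $\beta>U$ simultaneously.
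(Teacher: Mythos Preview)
Your proof is correct and follows exactly the paper's route: the paper's own proof is the single sentence ``Due to Lemma \ref{ogran} and the definition of $g_{\beta}(u)$, it suffices to take $\beta > U(c,K)$.'' You have simply unpacked this, and your two-step bootstrap for the reverse inclusion (embedding a semi-wavefront of (\ref{twe2an}) into (\ref{twe2m}) with an auxiliary $\beta'>\max\{M,U\}$ before invoking Lemma~\ref{ogran}) makes explicit a point the paper leaves to the reader.
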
 
\begin{proof} Due to Lemma \ref{ogran} and the definition of $g_{\beta}(u)$, it suffices to take
$\beta > U(c,K)$. \qed
\end{proof}
A stronger {\it a priori} estimate is based on the following  assertion: 
\begin{lem}\label{L20}
Let $y$ be a bounded solution of the  boundary problem 
$$
y'(t)-cy(t) -y^2(t) + g(t) =0,\  y(b)=0,  \ t \in (a,b], 
$$
where $c \geq 2$ and  {a continuous  function } Ê$g$ satisfies $$-1 < A:= \inf_{s\in (a,b]} g(s). $$ 
Set $B:=  \sup_{s\in (a,b]} g(s).$
If there exists $\omega:= \min_{s\in (a,b]} y(s)$ and $A<0$,  then 
$
\omega \geq {f(A)}. 
$
Similarly, if there exists $\gamma:= \max_{s\in (a,b]} y(s),$ then 
$
\gamma \leq f(B). 
$
\end{lem}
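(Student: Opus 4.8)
The plan is to exploit the fact that the boundary value problem for $y$ is a scalar Riccati‐type equation, so the extremal values of $y$ on $(a,b]$ must occur either at the endpoint $t=b$ (where $y(b)=0$) or at an interior critical point, and at such points the differential equation forces an algebraic inequality. First I would treat the lower bound. Suppose $\omega=\min_{s\in(a,b]}y(s)$ exists and $A<0$. If the minimum is attained at $t=b$, then $\omega=y(b)=0\ge f(A)$ since $f(A)\le f(0)=0$ by monotonicity of $f$, and we are done. Otherwise the minimum is attained at some interior point $t_0\in(a,b)$, where $y'(t_0)=0$. Plugging this into the equation gives $-c\,y(t_0)-y^2(t_0)+g(t_0)=0$, i.e. $\omega^2+c\omega-g(t_0)=0$, hence $\omega^2+c\omega = g(t_0)\ge A$. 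Thus $\omega$ is a root of $z^2+cz-g(t_0)$; since $\omega\le 0$, it must be the smaller root, $\omega=\frac{-c+\sqrt{c^2+4g(t_0)}}{2}$. Here one uses $A>-1$ (and $c\ge 2$) to guarantee $c^2+4g(t_0)\ge c^2+4A>c^2-4\ge 0$, so the square root is real. Since the map $u\mapsto \frac{-c+\sqrt{c^2+4u}}{2}$ is increasing in $u$ on $[-1,\infty)$ and $g(t_0)\ge A$, we get $\omega\ge \frac{-c+\sqrt{c^2+4A}}{2}$. Finally I would check that this last quantity equals $f(A)$: indeed $\frac{-c+\sqrt{c^2+4A}}{2}=\frac{(-c+\sqrt{c^2+4A})(c+\sqrt{c^2+4A})}{2(c+\sqrt{c^2+4A})}=\frac{2A}{c+\sqrt{c^2+4A}}=f(A)$, which is the definition given in the excerpt.

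The upper bound is entirely symmetric. If $\gamma=\max_{s\in(a,b]}y(s)$ is attained at $t=b$, then $\gamma=0=f(0)\le f(B)$ since $B\ge g(b)=c\cdot 0+0^2-g(b)$... more directly, $B=\sup g\ge -1$ is automatic from $A>-1$, and $f(B)\ge f(0)=0=\gamma$ when $B\ge0$; if $B<0$ then $\gamma\le 0$ as well since at the maximizing interior point the same relation would apply, so in all cases $\gamma\le f(B)$. If instead $\gamma$ is attained at an interior point $t_1$, then $y'(t_1)=0$ yields $\gamma^2+c\gamma=g(t_1)\le B$, so $\gamma$ is the larger root (being $\ge$ the value $0$ attained at $b$, hence $\ge 0$), namely $\gamma=\frac{-c+\sqrt{c^2+4g(t_1)}}{2}\le \frac{-c+\sqrt{c^2+4B}}{2}=f(B)$, again using monotonicity of $f$ and the identity above. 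Note that $c^2+4B\ge c^2+4A>0$ ensures the square root is defined.

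The one point requiring a little care — and the main (modest) obstacle — is justifying that the extrema are attained at critical points or at $b$, i.e. handling the open left endpoint $a$ and the possibility that the infimum/supremum is only approached as $t\to a^+$. Since $y$ is assumed bounded and the equation is regular, if $\min_{s\in(a,b]}y(s)$ (resp. $\max$) is assumed to exist, it is either attained at $b$ or at an interior point; it cannot "escape" to $a$ because the hypothesis explicitly posits the existence of $\omega$ (resp. $\gamma$) as a genuine minimum (resp. maximum) over $(a,b]$, and if it is attained at an interior point the first‑order condition $y'=0$ holds there. If $a=-\infty$, the same reasoning applies: the assumed existence of the extremum over $(-\infty,b]$ forces it to be attained somewhere in $(-\infty,b]$, and the interior first‑order condition still holds. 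Thus no genuine difficulty arises, and the argument reduces to the elementary algebra of the quadratic $z^2+cz-g(t_*)$ together with the monotonicity of $f$ on $[-1,\infty)$ and the sign information $\omega\le 0\le\gamma$ supplied by the boundary condition $y(b)=0$. $\qed$
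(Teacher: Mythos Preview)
Your treatment of the upper bound is essentially sound once the maximum is at an interior point: the observation $\gamma \ge y(b)=0>-c/2$ correctly singles out the larger root of $z^2+cz-g(t_1)=0$, and monotonicity of $f$ then gives $\gamma=f(g(t_1))\le f(B)$. (The boundary case $\gamma=0$ with $B<0$ is handled rather loosely in your write-up, but this degenerate situation does not arise in the paper's applications, where $y(a)=0$ as well and $\gamma>0$.)

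For the lower bound, however, there is a genuine gap. At an interior minimum $t_0$ you correctly obtain $\omega^2+c\omega=g(t_0)\ge A$. But this places $\omega$ only \emph{outside} the open interval between the two roots of $z^2+cz-A$: either $\omega\ge f(A)$ or $\omega\le \frac{-c-\sqrt{c^2+4A}}{2}$. Your justification ``since $\omega\le 0$, it must be the smaller root, $\omega=\frac{-c+\sqrt{c^2+4g(t_0)}}{2}$'' is doubly off: the displayed formula is in fact the \emph{larger} root, and --- more importantly --- the condition $\omega\le 0$ does not exclude the second alternative, because for $g(t_0)\le 0$ both roots of $z^2+cz-g(t_0)$ are nonpositive. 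What is missing is the information $\omega>-c/2$, and the first-order condition alone does not supply it.

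One clean way to close the gap (and in fact to prove the lower bound directly) is a comparison with the autonomous problem $w'=cw+w^2-A$, $w(b)=0$. Since $A\in(-1,0)$ and $c\ge 2$, a phase-line check shows $w(t)\in(f(A),0]$ for all $t\le b$; and from $g\ge A$ one finds that $u:=y-w$ satisfies $u'\le(c+y+w)u$ with $u(b)=0$, whence $u\ge 0$ on $(a,b]$. Thus $y(t)\ge w(t)>f(A)$, giving $\omega\ge f(A)$ without ever needing to identify the correct root. The paper itself does not give a self-contained argument here: it refers to the proof of the corresponding lemma in \cite{HTa} (stated there with the additional boundary condition $y(a)=0$) and observes that the assumed existence of the extrema suffices to repeat those arguments.
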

\begin{proof} The above statements were proved  in \cite[Lemma 20]{HTa} under additional condition $y(a) =0,$ but without assuming the existence of the global extrema of $y$ on $(a,b]$. It is easy to check that the latter condition (which is obviously weaker than $y(a)=0$)  is sufficient to {repeat} all the arguments in the proof of  \cite[Lemma 20]{HTa}. \qed \end{proof}
The next two results can be considered as  improvements of Lemma \ref{ogran}. 
\begin{lem} \label{ogrand} Let $c\geq 2$ and $\phi(t), \ \phi(-\infty) =0,$ be a bounded positive 
 solution of  equation 
(\ref{twe2an}).  Set 
$  \ \rho (u) = f(e^{-u}-1)$, $x(t) = -\ln \phi(t)$ and 
{
$$
m = \liminf_{t \to +\infty} 
x(t), \quad M= \limsup_{t \to +\infty} 
 x(t). 
$$}
Then  
$$
 \int_0^{+\infty} e^{\rho(m)s}K(s)ds + \int_{-\infty}^0e^{\rho(M)s}K(s)ds \geq e^{M}, 
$$
$$
 \int_0^{+\infty} e^{\rho(M)s}K(s)ds + \int_{-\infty}^0e^{\rho(m)s}K(s)ds \leq e^{m}.
 $$
 \end{lem}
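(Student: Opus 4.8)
The plan is to pass to the logarithmic variable and exploit the Riccati form of the equation together with Lemma~\ref{L20}. Set $x=-\ln\phi$; as recalled just before Lemma~\ref{ogran}, $x$ solves $x''-cx'-(x')^2+g=0$ on $\R$ with $g(t):=(\phi*K)(t)-1$, and by Lemma~\ref{po} one has $g(t)=(\phi*K)(t)-1>-1$ for every $t$. Note $p:=\liminf_{t\to+\infty}\phi(t)=e^{-M}$ and $P:=\limsup_{t\to+\infty}\phi(t)=e^{-m}$; by Lemma~\ref{bebe} $p>0$ (so $M<\infty$), and since $\phi$ is then a semi-wavefront, Lemma~\ref{ogran} and Corollary~\ref{twoe} give the global bound $\phi\le M^*:=U(c,K)$ on $\R$. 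If $\phi$ is eventually monotone, $\phi(+\infty)$ exists and equals $1$ by Lemma~\ref{li} (applied with $\beta=+\infty$), whence $m=M=0$, $\rho(m)=\rho(M)=f(0)=0$, and both asserted inequalities reduce to $|K|_1=1$; so from now on I assume $\phi$ is not eventually monotone.

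The crucial step is a two‑sided asymptotic estimate for $x'$. First I would show $p\le\liminf_{t\to+\infty}(\phi*K)(t)\le\limsup_{t\to+\infty}(\phi*K)(t)\le P$: splitting $(\phi*K)(t)=\int_{-\infty}^{t-T}+\int_{t-T}^{+\infty}$, one bounds the first integral using $\phi(v)\in(p-\varepsilon,P+\varepsilon)$ for $v\ge T=T_\varepsilon$, and the second by $M^*\int_{t-T}^{+\infty}K\to0$ as $t\to+\infty$. In particular $P\ge1\ge p$ (if not, $g$ keeps a constant sign near $+\infty$, and an elementary argument with $(e^{-ct}\phi')'$ forces $\phi$ to be eventually monotone, a contradiction). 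Hence for each $\varepsilon>0$ there is $T_\varepsilon$ with $p-1-\varepsilon\le g(t)\le P-1+\varepsilon$ for $t\ge T_\varepsilon$. Now for any $u\ge T_\varepsilon$ pick consecutive critical points $a<b$ of $\phi$ with $u\in(a,b]\subseteq[T_\varepsilon,+\infty)$ (possible since $\phi$ oscillates), so $x'(b)=0$ and $\phi$ is monotone on $[a,b]$; applying Lemma~\ref{L20} to $y=x'$ on $(a,b]$, using that $f$ is increasing, $\rho(m)=f(P-1)\ge0$ and $\rho(M)=f(p-1)\le0$, and disposing directly of the degenerate cases where Lemma~\ref{L20} does not apply verbatim, I get
$$
\rho(M)-\varepsilon'\le x'(u)\le\rho(m)+\varepsilon',\qquad u\ge T_\varepsilon,
$$
with $\varepsilon'\to0$ as $\varepsilon\to0$; this sharpens the universal bound $x'>f(-1)$ of Lemma~\ref{po}.

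Next I would combine this with the extremum identities. Since $\phi$ is not eventually monotone, $P$ and $p$ are realized as $\lim_n\phi(t_n^+)$ and $\lim_n\phi(t_n^-)$ along sequences of local maxima $t_n^+\to+\infty$ and local minima $t_n^-\to+\infty$; at such points $\phi'=0$ and the sign of $\phi''$ gives $(\phi*K)(t_n^-)\ge1\ge(\phi*K)(t_n^+)$. Fixing $s$, write $x(t_n^{\pm}-s)=x(t_n^{\pm})+\int x'(u)\,du$ over the oriented interval from $t_n^{\pm}$ to $t_n^{\pm}-s$, which lies in $[T_\varepsilon,+\infty)$ once $n$ is large; inserting the estimate $\rho(M)-\varepsilon'\le x'\le\rho(m)+\varepsilon'$ (the relevant slope being $\rho(m)$ for $s>0$ and $\rho(M)$ for $s<0$ at a minimum, and the other way round at a maximum) and letting $n\to\infty$, then $\varepsilon\to0$, one obtains
$$
\limsup_{n}\phi(t_n^--s)\le p\,e^{\rho(m)s}\quad(s>0),\qquad \limsup_{n}\phi(t_n^--s)\le p\,e^{\rho(M)s}\quad(s<0),
$$
and dually $\liminf_{n}\phi(t_n^+-s)\ge P\,e^{\rho(M)s}$ for $s>0$ and $\ge P\,e^{\rho(m)s}$ for $s<0$. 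Integrating against $K$ — by the reverse Fatou lemma with dominating function $M^*K\in L^1(\R)$ for the $\limsup$ pair, and by Fatou's lemma for the $\liminf$ pair — and passing to the limit in $(\phi*K)(t_n^-)\ge1\ge(\phi*K)(t_n^+)$ yields
$$
1\le p\Big(\int_0^{+\infty}e^{\rho(m)s}K(s)\,ds+\int_{-\infty}^{0}e^{\rho(M)s}K(s)\,ds\Big),
$$
$$
1\ge P\Big(\int_0^{+\infty}e^{\rho(M)s}K(s)\,ds+\int_{-\infty}^{0}e^{\rho(m)s}K(s)\,ds\Big),
$$
which are exactly the two claimed inequalities after dividing by $p=e^{-M}$ and $P=e^{-m}$ (the right‑hand integrals being allowed to be $+\infty$, in which case the first inequality is trivial).

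The main obstacle I anticipate is the second step, i.e. getting the clean bound $x'(u)\in[\rho(M)-\varepsilon',\rho(m)+\varepsilon']$ near $+\infty$. Its two ingredients both need care: (i) the a priori estimates $p\le\liminf(\phi*K)\le\limsup(\phi*K)\le P$, whose proof genuinely uses the global bound $\phi\le M^*$ of Lemma~\ref{ogran} to kill the tail of the convolution over the left half‑line (and $K\in L^1$ is precisely what is used there, no extra decay); and (ii) the application of Lemma~\ref{L20} on the right‑going intervals $(a,b]$, where the bookkeeping of the degenerate situations (long monotone stretches, $\inf_{(a,b]}g\ge0$, or $\phi$ eventually monotone) must be handled by hand since Lemma~\ref{L20} is not directly applicable. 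Once that is in place, the transition from the pointwise‑in‑$s$ bounds to the integral inequalities is routine Fatou, and the use of sequences of extrema instead of a single point is what lets the argument avoid any smoothness or symmetry hypothesis on $K$.
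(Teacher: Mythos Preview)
Your proposal is correct and follows essentially the same route as the paper: both pass to $x=-\ln\phi$, use Lemma~\ref{L20} on the Riccati equation for $y=x'$ to obtain the asymptotic two-sided bound $\rho(M)\le\liminf x'\le\limsup x'\le\rho(m)$, then feed these slope bounds into the extremum identities $(\phi*K)(t_n^-)\ge 1\ge(\phi*K)(t_n^+)$ along sequences of local minima/maxima, and pass to the limit in the convolution. The only technical differences are cosmetic: (i) you first prove $p\le\liminf(\phi*K)\le\limsup(\phi*K)\le P$ and then apply Lemma~\ref{L20} on intervals between consecutive critical points, whereas the paper applies Lemma~\ref{L20} directly with $g(t)=\int e^{-x(t-u)}K(u)\,du-1$ and bounds $g$ via $\min x$ on a $T$-neighborhood; (ii) the paper checks the hypothesis $A<0$ of Lemma~\ref{L20} by evaluating the equation at the interior minimum ($g(\varsigma_j)=cy(\varsigma_j)+y^2(\varsigma_j)<0$ since $-c<y(\varsigma_j)<0$), which is a bit cleaner than your ``degenerate case'' bookkeeping; (iii) for the final passage to the limit you invoke Fatou/reverse Fatou with the dominating function $M^*K$, while the paper uses a truncation to $[-T,T]$ with an $\varepsilon$-tail estimate (this is where the paper picks up the extra parameters $A,B$ and the more general inequality~(\ref{ABe}), later specialized by $A=-\infty,\ B=+\infty$).
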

 
 Observe that the integrals in the statement of Lemma \ref{ogrand} (and in Lemma \ref{L9} below as well) can be infinite (i.e. equal to $+\infty$).  
\begin{proof}  By Lemma \ref{bebe}, 
 the wavefront profile $\phi(t)$ is increasing on some maximal interval $(-\infty, Q_0)$ and 
$\liminf_{t \to + \infty} \phi(t) >0$.  Moreover, if $\phi(t)$ is eventually monotone and $\beta$ is sufficiently large 
then $\phi(+\infty)=1$ by Lemmas \ref{li} and \ref{ogran}.  In such a case, $M=m=0$, which proves the lemma. 
Hence,  we may assume that $\phi(t)$ is not eventually monotone.  Set $y(t) = x'(t)$, since $x(t)$ is neither eventually monotone there exists some $s > Q_0$ such that $y(s) >0$. Moreover, 
it is clear that for each such $s$ we can find some finite $a<s< b$ such that  $y(s)> 0=y(b)= y(a)$.  Then  Lemmas \ref{L20} and \ref{ogran} assure that 
$$
 y(s) \leq f\left(\max_{t \in [a,b]} \int_\R e^{-x(t-u)}K(u)du -1\right) \leq f(U(c,K)-1). 
$$
In particular, this means that  $\sup_{s \in \R} y(s)$ is a finite number. We claim that 
\begin{equation}\label{1y}
\limsup_{s \to +\infty} y(s) \leq \rho(m).
\end{equation}
Indeed, let $s_j \to +\infty$ be such that $y(s_j) \to \limsup_{s \to +\infty} y(s)$. Then for appropriately chosen 
sequences $a_j < s_j < b_j$, $a_j \to +\infty$,  we have that 
$$
 y(s_j) \leq f\left(\max_{t \in [a_j,b_j]} \int_\R e^{-x(t-u)}K(u)du -1\right). 
$$
Next, by Lemma \ref{ogran}, for every small $\epsilon >0$ there exists $T= T(\epsilon, c, K), \ T(0^+, c, K)=+\infty,$ such that 
\begin{equation} \label{TT}
 \hspace{-7mm} \int_{-T}^{T}K(u)du > 1-\epsilon, \ \int_{-\infty}^{-T} e^{-x(t-u)}K(u)du + \int^{+\infty}_{T} e^{-x(t-u)}K(u)du < \epsilon, \ t \in \R. 
\end{equation}
Consequently, 
$$
 y(s_j) \leq f(\max_{t \in [a_j-T,b_j+T]}e^{-x(t)}  -1+\epsilon).
$$
Taking into account that $\liminf_{j \to \infty}\min_{t \in [a_j-T,b_j+T]}x(t) \geq m$, we conclude that 
$$
\limsup_{s \to +\infty} y(s) \leq f(e^{-m} -1+\epsilon). 
$$
Letting $\epsilon \to 0^+$ in the last inequality, we obtain (\ref{1y}).  

Next, Lemma \ref{po} implies that $y(s) > f(-1)\geq -1 >-c$ for all $s \in\R$. Since $x(t)$ is not eventually monotone, there exist 
sequences $d_j < \varsigma_j < e_j$, $d_j \to +\infty,$ such that $\min_{s\in [d_j,e_j]}y(s) =  y(\varsigma_j)< 0=y(d_j)= y(e_j)$ and 
$y(\varsigma_j) \to \liminf_{s \to +\infty} y(s)$. Set $g(t) =  \int_\R e^{-x(t-u)}K(u)du -1$.  Since $y'(\varsigma_j)=0$, we obtain that 
$$-1< \min_{s \in [d_j,e_j]}g(s) \leq g(\varsigma_j) = y^2(\varsigma_j)+cy(\varsigma_j)<0.$$ 
Therefore Lemma \ref{L20} can be applied yielding   
$$
 y(\varsigma_j) \geq f\left(\min_{t \in [d_j,e_j]} \int_\R e^{-x(t-u)}K(u)du -1\right). 
$$
From this estimation, arguing as above, we deduce that 
$$
\liminf_{s \to +\infty} y(s) \geq \rho(M) > f(-1).
$$
Next, let $t_j \to +\infty$ be a sequence of local maximum points of $x(t)$ such that $x(t_j) \to M$ as $j \to +\infty$.  
With $T$ and $\epsilon$ as in (\ref{TT}) and  for sufficiently large $j$, we find that
 $$m'_j:= \min_{s \in [t_j,t_j+T]}x'(s) > \rho(M)-\epsilon, \ M'_j:= \max_{s \in [t_j,t_j+T]}x'(s) < \rho(m)+\epsilon,$$
 $$m_j:= \min_{[t_j-T,t_j+T]}x(s) > m -\epsilon, \ M_j:= \max_{[t_j-T,t_j+T]}x(s) < M+\epsilon, $$
$$
\epsilon + \int_{-T}^Te^{-x(t_j-s)}K(s) > \int_{\R}e^{-x(t_j-s)}K(s)ds \geq 1, 
$$
$$
x(t) \geq x(t_j) + m'_j(t-t_j) \geq x(t_j) + (\rho(M)-\epsilon)(t-t_j), \  t\in[t_j, t_j +T], 
$$
$$
 x(t)  \geq x(t_j) +  M'_j(t-t_j) \geq x(t_j) + (\rho(m)+\epsilon)(t-t_j),\ t \in [t_j-T, t_j]. 
$$
Therefore, for each subset $[A,B] \subset [-T,T], \ A \leq 0\leq B,$ we obtain
$$
\epsilon + \int_{-T}^0e^{-x(t_j-s)}K(s)ds  + \int_0^{T}e^{-x(t_j-s)}K(s)ds>  1, 
$$
$$
\epsilon + \int^{A}_{-T}e^{-m+\epsilon}K(s)ds + \int_{A}^0e^{-x(t_j)+ (\rho(M)-\epsilon)s}K(s)ds  + $$
$$
\int_0^{B}e^{-x(t_j)+(\rho(m)+\epsilon)s)}K(s)ds+  \int_B^{T}e^{-m+\epsilon}K(s)ds>  1. 
$$
Taking limit in the last inequality when $\epsilon \to 0$ (so that $T \to +\infty$), $j \to +\infty$, we obtain that  
\begin{eqnarray} \label{ABe} 
e^{-m} \left(\int^{A}_{-\infty}K(s)ds +\int_{B}^{+\infty}K(s)ds\right) +  e^{-M}\left(\int_{A}^0e^{\rho(M)s}K(s)ds  + 
\int_0^{B}e^{\rho(m)s}K(s)ds\right)\geq  1. 
\end{eqnarray}
This relation is valid for each $-\infty \leq A \leq 0 \leq B \leq +\infty$ and if $A, B$ are infinite, we get the first inequality of the lemma. Clearly, the second inequality can be deduced in a similar way from  
\begin{eqnarray} \label{A'B'e} 
e^{-M} \left(\int^{A'}_{-\infty}K(s)ds +\int_{B'}^{+\infty}K(s)ds\right) +  e^{-m}\left(\int_{A'}^0e^{\rho(m)s}K(s)ds  + 
\int_0^{B'}e^{\rho(M)s}K(s)ds\right)\leq  1, 
\end{eqnarray} 
where $A', B'$ are arbitrary real numbers satisfying $-\infty \leq A' \leq 0 \leq B' \leq +\infty$. 
\qed
\end{proof}
\begin{lem}\label{L9} Let $\phi(t)$ be a semi-wavefront to equation 
(\ref{twe2an}) propagating with the speed $c\geq 2$. Set $$
p = \liminf_{t \to +\infty} 
\phi(t), \ P= \limsup_{t \to +\infty}\phi(t),\ \alpha_+:= \frac{1}{c}\int_{-\infty}^0|s|K(s)ds, \ \alpha_-:= \frac{1}{c}\int^{+\infty}_0sK(s)ds. 
$$
Then $0< p \leq 1 \leq P$ and 
\begin{equation}\label{cine}
p+\alpha_+ P(1-p) +\alpha_- P(P-1) \geq 1,
\end{equation}
\begin{equation}\label{cine2}
 P-\alpha_+ P(P-1) -\alpha_- P(1-p) \leq 1. 
\end{equation}
\end{lem}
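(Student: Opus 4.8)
The plan is to extract (\ref{cine}) and (\ref{cine2}) from the behaviour of $\phi$ near isolated extrema at $+\infty$, using the asymptotic logarithmic‑derivative bounds already prepared inside the proof of Lemma~\ref{ogrand}. First I would dispose of the trivial cases. Positivity $p>0$ is Lemma~\ref{bebe}. If $\phi$ is eventually monotone, then $\phi(+\infty)$ exists and, by Lemmas~\ref{po}, \ref{li} and \ref{ogran}, equals $1$; hence $p=P=1$ and both inequalities hold. So assume henceforth $\phi$ is not eventually monotone. Then $\phi$ has local minima $\tau_j\to+\infty$ with $\phi(\tau_j)\to p$ and local maxima $\sigma_j\to+\infty$ with $\phi(\sigma_j)\to P$ (a short argument is needed here, since $p,P$ are defined only via $\liminf/\limsup$). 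Evaluating (\ref{twe2an}) at $\tau_j$ (where $\phi'=0$, $\phi''\ge0$) and at $\sigma_j$ (where $\phi'=0$, $\phi''\le0$) gives the crude relations $(\phi*K)(\tau_j)\ge1$ and $(\phi*K)(\sigma_j)\le1$; combined with $\phi\le U(c,K)$ (Lemma~\ref{ogran}) and $\phi(t)\le P+o(1)$ as $t\to+\infty$, these already yield $p\le1\le P$.

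For the refined inequalities I would invoke the estimates $\limsup_{t\to+\infty}(-\phi'(t)/\phi(t))\le f(P-1)=:b_+$ and $\liminf_{t\to+\infty}(-\phi'(t)/\phi(t))\ge f(p-1)=-b_-$ established inside the proof of Lemma~\ref{ogrand}: ultimately $\phi$ increases at logarithmic rate at most $b_-$ and decreases at logarithmic rate at most $b_+$. Integrating these rate bounds away from an extremum and recalling $\phi\le P+o(1)$ gives, for $j$ large and any $y\ge0$,
$$
\phi(\tau_j-y)\le (p+o(1))\,\min\bigl\{e^{b_+y},\,(P+o(1))/(p+o(1))\bigr\},
$$
the analogous estimate with rate $b_-$ holding for $\phi(\tau_j-y)$, $y\le0$, and the symmetric lower bounds (with $p,P$ and the inequality reversed, and rates $b_-,b_+$ swapped according to whether one moves into the past or future of the maximum) holding around $\sigma_j$. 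Substituting these into $(\phi*K)(\tau_j)=\int_\R\phi(\tau_j-y)K(y)\,dy$, splitting the convolution at $0$, letting $j\to\infty$ and then $o(1)\to0$ (the far tails $|y|>R$ contribute at most $U(c,K)\int_{|y|>R}K\to0$, and if $\alpha_\pm=+\infty$ the claimed inequalities are vacuous), inequality (\ref{cine}) reduces to the two scalar inequalities
$$
\min\{p\,e^{b_+y},P\}\le p+\tfrac{P(P-1)}{c}\,y,\qquad \min\{p\,e^{b_-y},P\}\le p+\tfrac{P(1-p)}{c}\,y\qquad(y\ge0),
$$
and (\ref{cine2}) reduces analogously to the dual lower bounds coming from $(\phi*K)(\sigma_j)\le1$. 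The first scalar inequality follows from $cb_+=cf(P-1)=(P-1)-f(P-1)^2\le P-1$ together with the convexity estimate $p\,e^u\le p+Pu$ on $0\le u\le\ln(P/p)$. The second, more delicate one follows from $cb_-=-cf(p-1)=(1-p)+f(p-1)^2$ and the elementary estimate $-\ln p\ge 2(1-\sqrt p)$ (equivalently $\sqrt p-1\ge\ln\sqrt p$), which after the substitution $u=b_-y$ is exactly the bound that makes $p+\tfrac{P(1-p)}{cb_-}u-p\,e^u\ge0$ throughout $[0,\ln(P/p)]$.

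The main obstacle is this second family of scalar comparisons. Since $b_-=-f(p-1)$ satisfies $cb_->1-p$ strictly (the discriminant $c^2-4(1-p)$ is positive), the naive tangent bound $e^{x}\ge1+x$ overshoots and must be replaced by the precise convexity/logarithmic estimates above; moreover, in the lower‑bound direction required for (\ref{cine2}) even these do not suffice pointwise in an intermediate range of $y$, and one must additionally use that near an extremum $\phi'(t)/\phi(t)$ vanishes (linearly in the distance to the extremum, by boundedness of $\phi''$ from the equation), so that $\phi$ there is genuinely closer to its extremal value than the crude exponential bound predicts. A secondary but necessary layer of bookkeeping is the uniform control of the error terms: that sequences $\tau_j,\sigma_j$ with the stated asymptotics exist, that the convolution tails are negligible in the limit, and that the same reasoning applied to the finer relations (\ref{ABe})--(\ref{A'B'e}) (or to the two inequalities of Lemma~\ref{ogrand} after the substitution $m=-\ln P$, $M=-\ln p$, $\rho(m)=f(P-1)$, $\rho(M)=f(p-1)$) is legitimate, which also accounts for the degenerate cases where some of the integrals, and hence $\alpha_\pm$, are infinite.
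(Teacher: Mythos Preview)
Your handling of $0<p\le1\le P$ and the eventually monotone case is fine and matches the paper. The difference begins afterward: you work with the \emph{logarithmic} derivative bounds $-\phi'/\phi\in[\rho(M),\rho(m)]$ from the proof of Lemma~\ref{ogrand} and try to convert the resulting \emph{exponential} envelopes around extrema into the \emph{linear} inequalities (\ref{cine})--(\ref{cine2}). For (\ref{cine}) this can be pushed through (your scalar inequalities are correct, and the ``delicate'' one eventually follows from $cb_-=(1-p)+b_-^2\le 2(1-\sqrt p)\le -\ln p$), but for (\ref{cine2}) you correctly observe that the pointwise comparison $Pe^{-b_-y}\ge P-\tfrac{P(1-p)}{c}y$ \emph{fails} for small $y>0$ (since $b_->\tfrac{1-p}{c}$), and the cap at $p$ does not rescue it there. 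Your proposed fix --- that $\phi'/\phi$ vanishes linearly at $\sigma_j$ because $\phi''$ is bounded --- only gives $|\phi'(\sigma_j-y)|\le Cy$ with a constant governed by $\|\phi\|_\infty$ and $\|\phi*K\|_\infty$, not by $p,P$; so it does not produce the precise slopes $P(P-1)/c$ and $P(1-p)/c$ you need, and the argument is left incomplete.

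The paper avoids this obstacle by never passing through the logarithmic derivative at all. It bounds $d=\liminf_{t\to+\infty}\phi'(t)$ and $D=\limsup_{t\to+\infty}\phi'(t)$ \emph{directly}: along a sequence $t_j\to+\infty$ with $\phi'(t_j)\to d$ and $\phi''(t_j)=0$, the equation gives $-c\phi'(t_j)=\phi(t_j)\bigl((\phi*K)(t_j)-1\bigr)$, whence $-d\le \tfrac1c P(P-1)$; similarly $D\le\tfrac1c P(1-p)$. These are linear bounds on $\phi'$ itself, so near a local maximum $s_j$ one has the \emph{linear} lower envelopes $\phi(s_j-s)\ge \phi(s_j)-(D+\varepsilon)s$ for $s\ge0$ and $\phi(s_j-s)\ge \phi(s_j)-(d-\varepsilon)s$ for $s\le0$, which plug into $(\phi*K)(s_j)\le1$ and yield (\ref{cine2}) immediately after integration and passage to the limit. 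No scalar exponential--versus--linear comparison is needed. In short: the missing idea is to evaluate (\ref{twe2an}) at critical points of $\phi'$ (where $\phi''=0$), not at critical points of $\phi$, to get the slope bounds; then use linear, not exponential, envelopes.
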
 
\begin{proof} Taking $A=B=A'=B'=0$ in (\ref{ABe}), (\ref{A'B'e}) we find immediately that   $0< p \leq 1 \leq P$. In the case when $p=P$, Lemma \ref{li} and Corollary \ref{twoe} imply that $p=P=1$ and that proves the lemma. 
If $p <P$, $\phi(t)$ oscillates between $p$ and $P$. Therefore $\phi'(t)$ is oscillating around $0$  and there exist finite limits
$$
d = \liminf_{t \to +\infty} 
\phi' (t) \leq 0 \leq D: = \limsup_{t \to +\infty}\phi'(t). 
$$
We claim that 
$$
-\frac 1c P(P-1) \leq d \leq D \leq \frac 1c P(1-p). 
$$
Indeed, let $t_j \to +\infty$ be such that $0> \phi'(t_j) \to d, \ \phi''(t_j)=0$. Then 
$$
-\phi'(t_j) = \frac 1c \phi(t_j) (\phi * K(t_j)-1). 
$$ 
For an arbitrary $\epsilon >0$, we fix $T$ sufficiently large 
to have 
$$
\int_{-\infty}^{-T}\phi(t-s)K(s)ds +  \int^{+\infty}_{T}\phi(t-s)K(s)ds < \epsilon, \quad t \in \R.
$$
Then 
$$
-\phi'(t_j) \leq \frac 1c \phi(t_j) \left(\epsilon + \int_{-T}^{T}\phi(t_j-s)K(s)ds-1\right) $$
$$
 \leq \frac 1c \sup_{u \geq 0.5 t_j}\phi(u) \left(\epsilon + \sup_{u \geq 0.5 t_j}\phi(u) \int_{-T}^{T}K(s)ds-1\right).
$$  
After taking limit as $j \to +\infty,\  \epsilon \to 0^+$ (so that $T \to +\infty$), we get  one of the required relations: $-d \leq P(P-1)/c$.  The second inequality can be proved similarly. 

Next, consider the sequence $\{s_j\}$ of  local maximum points such that 
$\phi(s_j) \to P,$ $s_j \to +\infty$.  We can suppose that $s_j$ is large enough to have 
$$
\min_{s \in [s_j-T,s_j+T]}\phi'(s) \geq d-\epsilon, \quad \max_{s \in [s_j-T,s_j+T]}\phi'(s) \leq D+\epsilon. 
$$
Then 
$$
\phi(s_j-s) \geq \phi(s_j) - (D+\epsilon)s, \ \mbox{for} \ s \in [0,T], \quad  \phi(s_j-s) \geq \phi(s_j) - (d-\epsilon)s,\  \mbox{for} \ s \in [-T,0], 
$$
and therefore 
$$
1 \geq \int_{-\infty}^{+\infty}\phi(s_j-s)K(s)ds \geq \int_{-T}^{T}\phi(s_j-s)K(s)ds \geq $$
$$
\phi(s_j)\int_{-T}^{T}K(s)ds - (D+\epsilon) \int_{0}^TsK(s)ds+ (d-\epsilon)\int_{-T}^{0}|s|K(s)ds. 
$$
Finally,  letting $\epsilon \to 0^+$ (hence, $T \to +\infty$), $s_j \to +\infty$, we get the required inequality 
$$
1 \geq P - Dc\alpha_- +dc\alpha_+ \geq P -\alpha_- P(1-p) -\alpha_+ P(P-1). 
$$
The proof of inequality (\ref{cine}) is  completely analogous and therefore it is omitted here. 
\qed
\end{proof}
\begin{remark}
Inequality (\ref{cine2}) has a simple geometric interpretation. Indeed, consider the 
following function 
$$
\tilde \phi_-(-s) := \left\{\begin{array}{cc} P -P(1-p)s/c,&  s \geq 0, \\  P +P(P-1)s/c ,
& s\leq 0,
\end{array}\right.
$$
then  inequality (\ref{cine2}) can be written as $\tilde \Theta(p,P):= (K * \tilde\phi_-)(0) \leq 1$.  A serious drawback of 
the obtained estimate is that $\tilde \Theta(p,P)$ can be negative and therefore the relation $\tilde \Theta(p,P)\leq 1$ is not very useful. We can avoid this imperfection by introducing the function 
$
\phi_-(-s, p, P) := \max\{p, \tilde \phi_-(-s)\}.  
$
Arguing as in the proof of Lemma \ref{L9}, we can find that $p$ and $P$ should satisfy the following improved inequality
$
\Theta(p,P) =(K * \phi_-)(0) \leq 1.
$
Obviously,  continuous function $\Theta(p,P)$ is non-negative for all $0\leq p\leq P$. 
\end{remark}

\section{Existence of semi-wavefronts for $c\geq 2$.} \label{ESW}
In this section, we are going to prove Theorem \ref{Te1}.  It should be observed that the necessity of the condition 
$c\geq 2$ can be easily obtained from the analysis of the asymptotic behaviour of eventual semi-wavefront $\phi$ at $-\infty$ (if $c<2$ then $\phi(t)$ oscillates around $0$ at $-\infty$). Thus we have to prove only the sufficiency of the condition $c \geq 2$ for the existence of semi-wavefronts. 

First, consider    $$r(\phi)(t):= b\phi(t) + g_{\beta}(\phi(t))(1-(\phi * K)(t)),$$ where  $g_{\beta}(u)$ is defined by (\ref{G}), $\beta$ is as in Corollary \ref{twoe}, and $b>1+2\beta$.   In view of Corollary \ref{twoe}, it suffices to establish that the equation  
\begin{equation}
\label{twe2mm} 
\phi''(t) - c\phi'(t)  - b \phi(t) + r(\phi)(t)=0
\end{equation}
has a semi-wavefront.
Observe that if  a continuous function $\psi(t), \ 0 \leq \psi(t)\leq 2\beta,$ satisfies $0\leq \psi(s) \leq \beta$ at some point $s \in \R$, then  
\begin{equation}\label{Ar}
r(\psi)(s)=  \psi(s)(b +1-
(\psi * K)(s)) \geq 0. 
\end{equation}
Now, if   $\beta\leq \psi(s) \leq 2\beta$, then  $$
r(\psi)(s)=  b
\psi(s)+  (2\beta -\psi(s))(1-(\psi * K)(s))=
$$
\begin{equation} \label{Br}
2\beta(1-(\psi * K)(s))+ \psi(s) (b-1+(\psi * K)(s))> \beta.  
\end{equation}

Next, we consider the non-delayed KPP-Fisher equation  $u_t= u_{xx} +g_{\beta}(u)$.  The profiles $\phi$
of the traveling fronts $u(x,t)= \phi (x+ct)$ for this equation satisfy
\begin{equation}\label{kppm}
\phi''(t) - c\phi'(t)  + g_{\beta}(\phi(t))=0, \ c \geq 2.
\end{equation}
Recall that  $0< \lambda \leq \mu$ denote eigenvalues of equation (\ref{kppm}) linearized around $0$ (i.e. 
$\chi(\lambda) = \chi(\mu) =0$ where $\chi(z) := z^2-cz+1$). In the sequel,  $\phi_+(t)$ will denote the unique monotone front to (\ref{kppm})  normalised (cf. \cite[Theorem 6]{GT})  by the condition $$ \phi_+(t):= (-t)^je^{\lambda t} (1+o(1)), \ t \to -\infty, \ j \in \{0,1\}.$$
Let us note here that  $\phi_+(t)$ for all $t$ such that $\phi_+(t) < \beta,$ satisfies the linear differential equation 
$$
\phi''(t) - c\phi'(t)  + \phi(t)=0.
$$
In particular, if $c >2$  then there exists (see e.g. \cite[Theorem 6]{GT}) $C \geq 0$ such that  
\begin{equation} \label{1mo}
 \phi_+(t):= e^{\lambda t}  - C e^{\mu t}, \ t \leq \phi_+^{-1}(\beta).   \end{equation}
 Let $z_1< 0< z_2$ be the  roots of the equation  $z^2 -cz-b =0$. 
Set $z_{12} =z_2-z_1>0$  and consider the  integral operator $A_m$ depending on $b$ and defined by
$$(A_m\phi)(t) = \frac{1}{z_{12}}\left\{\int_{-\infty}^te^{z_1
(t-s)}r(\phi(s))ds + \int_t^{+\infty}e^{z_2
(t-s)}r(\phi(s))ds \right\}.
$$
\begin{lem}  \label{usl} Assume that  $b > 1+2\beta $ and let  $0\leq \phi(t) \leq \phi_+(t)$, then 
$$0 \leq (A_m\phi)(t) \leq \phi_+(t).$$
\end{lem}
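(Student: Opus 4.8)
The plan is to compare $(A_m\phi)(t)$ with $\phi_+(t)$ through the very same positive Green kernel $z_{12}^{-1}e^{z_1(t-s)}$, $z_{12}^{-1}e^{z_2(t-s)}$ that defines $A_m$, after noting that $\phi_+$ itself admits an integral representation of the same form. The lower bound $(A_m\phi)(t)\ge 0$ comes for free from the positivity of these kernels once we know the integrand $r(\phi(\cdot))$ is nonnegative, and the upper bound reduces to a pointwise comparison of two integrands.

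For the lower bound: since $0\le\phi(t)\le\phi_+(t)\le 2\beta$ on $\R$ and $K\ge 0$, $|K|_1=1$, we have $0\le(\phi*K)(s)\le 2\beta$, so estimates (\ref{Ar}) and (\ref{Br}) (which hold for any continuous $\psi$ with $0\le\psi\le 2\beta$ once $b>1+2\beta$) give $r(\phi(s))\ge 0$ for every $s$. As $r(\phi(\cdot))$ is bounded and continuous and the kernels are positive, the two integrals converge and $(A_m\phi)(t)\ge 0$. For the upper bound, put $\tilde r(u):=bu+g_\beta(u)$. From (\ref{kppm}) the front $\phi_+$ solves the linear-looking equation $\phi_+''-c\phi_+'-b\phi_+ + \tilde r(\phi_+)=0$; since $\phi_+$ and $\tilde r(\phi_+(\cdot))$ are bounded and $z_1<0<z_2$, the unique bounded solution of this equation is the one produced by the Green kernel, hence
$$
\phi_+(t)=\frac{1}{z_{12}}\left\{\int_{-\infty}^t e^{z_1(t-s)}\tilde r(\phi_+(s))\,ds+\int_t^{+\infty}e^{z_2(t-s)}\tilde r(\phi_+(s))\,ds\right\}.
$$
Therefore $\phi_+(t)-(A_m\phi)(t)$ equals the same expression with $\tilde r(\phi_+(s))$ replaced by $\tilde r(\phi_+(s))-r(\phi(s))$, and by positivity of the kernels it suffices to prove $\tilde r(\phi_+(s))\ge r(\phi(s))$ for all $s$. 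This splits into two elementary facts: (i) $\tilde r$ is nondecreasing on $[0,\infty)$ — it equals $(b+1)u$ on $[0,\beta]$, $(b-1)u+2\beta$ on $[\beta,2\beta]$ and $bu$ on $[2\beta,\infty)$, all increasing since $b>1$, and it is continuous — so $\tilde r(\phi_+(s))\ge\tilde r(\phi(s))$ because $\phi(s)\le\phi_+(s)$; and (ii) $r(\phi(s))\le\tilde r(\phi(s))$, since $g_\beta\ge 0$ and $1-(\phi*K)(s)\le 1$. Chaining these, $\tilde r(\phi_+(s))\ge\tilde r(\phi(s))\ge r(\phi(s))$, and we conclude $(A_m\phi)(t)\le\phi_+(t)$.

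I do not expect a serious obstacle here: the content is monotonicity of $\tilde r$ together with positivity of the Green kernel. The only point that needs a little care is the integral representation of $\phi_+$, i.e. the uniqueness of the bounded solution of $v''-cv'-bv+F=0$ with $F$ bounded continuous; this is immediate because the homogeneous solutions are $e^{z_1t}$ and $e^{z_2t}$ with $z_1<0<z_2$, so boundedness forces the homogeneous part to vanish. Once this identity is recorded, the rest is the pointwise inequality $\tilde r(\phi_+)\ge r(\phi)$ and the observation that $A_m$ is built from a nonnegative kernel.
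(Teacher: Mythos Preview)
Your proof is correct and is essentially the same as the paper's. The paper writes $R(\phi_+(t)):=b\phi_+(t)+g_\beta(\phi_+(t))$, which is your $\tilde r(\phi_+(t))$, and argues $r(\phi(s))\le b\phi(s)+g_\beta(\phi(s))\le b\phi_+(s)+g_\beta(\phi_+(s))$ in one line (your steps (ii) then (i)); you simply spell out the monotonicity of $\tilde r$ and the integral representation of $\phi_+$ in a bit more detail.
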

\begin{proof} The lower estimate is obvious since $0\leq \phi(t) \leq \phi_+(t) \leq 2\beta$ and therefore
$r(\phi(t)) \geq 0$ in view of (\ref{Ar}) and  (\ref{Br}). Now, since  $\phi(t) \leq \phi_+(t)$ and $bu+g_{\beta}(u)$ is an increasing function, we find that 
$$
r(\phi(s)) \leq b\phi(t) + g_{\beta}(\phi(t)) \leq b\phi_+(t) + g_{\beta}(\phi_+(t)) =:R(\phi_+(t)). 
$$
Thus 
$$
(A_m\phi)(t) \leq  \frac{1}{z_{12}}\left\{\int_{-\infty}^te^{z_1
(t-s)}R(\phi_+(s))ds + \int_t^{+\infty}e^{z_2
(t-s)}R(\phi_+(s))ds \right\}= \phi_+(t),
$$
and the lemma is proved. \qed
\end{proof}
Lemma \ref{usl} says that $\phi_+(t)$ is an {\it upper} solution for (\ref{twe2mm}), cf. \cite{wz}. Still,  we need to find a {\it lower} solution.  Here, assuming that $c >2$  and that $K$ has a compact support we will use the following  well known  ansatz (see e.g. \cite{wz})  
$$
\phi_-(t)= \max\{0,e^{\lambda t} (1- Me^{\epsilon t})\},
$$
where $\epsilon \in (0, \lambda)$  and $M \gg 1$  is chosen in 
such a way that $- \chi(\lambda+\epsilon)   >(L/M) \int_{-\infty}^{\infty} e^{-\epsilon s} K(s) ds$ (here  $L : = \sup_{t \in \R} \phi_+(t)e^{-\epsilon t}$),  $\lambda +\epsilon < \mu$, and
$$0< \phi_-(t) < \phi_+(t) < e^{\epsilon t}< 1, \quad t < T_c, \ \mbox{where} \ \phi_-(T_c) =0.$$
The above inequality $\phi_-(t) < \phi_+(t)$ is possible due to  representation (\ref{1mo}). We note also  that $(\phi_+*K)(t) \leq L e^{\epsilon t}\int_\R e^{-\epsilon s}K(s)ds$. 
\begin{lem}  \label{nre} Assume that {$c>2$}, { $K$ has a  compact support}, $b > 2\beta +2$. Then the inequality  $\phi_-(t) \leq \phi(t) \leq \phi_+(t), \ t \in \R,$ implies that 
\begin{equation}\label{ul2}
\phi_-(t) \leq (A_m\phi)(t) \leq \phi_+(t), \quad t \in \R. 
\end{equation}
\end{lem}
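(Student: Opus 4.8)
The plan is to dispose of the upper bound in (\ref{ul2}) at once and then concentrate on the lower bound. Since $\phi_-(t)\ge 0$, the hypothesis $\phi_-\le\phi\le\phi_+$ in particular gives $0\le\phi\le\phi_+$, so Lemma \ref{usl} yields $0\le(A_m\phi)(t)\le\phi_+(t)$ for all $t$; this settles the upper estimate, and also gives $(A_m\phi)(t)\ge 0=\phi_-(t)$ whenever $t\ge T_c$. Thus only the inequality $(A_m\phi)(t)\ge\phi_-(t)$ for $t<T_c$ remains to be proved.

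Write $u=A_m\phi$. By construction $u$ solves $u''-cu'-bu+r(\phi)=0$ on $\R$, and $0\le u\le\phi_+$, so $u$ is bounded with $u(-\infty)=0$. On $(-\infty,T_c)$ one has $\phi_-(t)=e^{\lambda t}-Me^{(\lambda+\epsilon)t}$, and since $\chi(\lambda)=0$ a direct computation gives $\phi_-''-c\phi_-'+\phi_-=-M\chi(\lambda+\epsilon)e^{(\lambda+\epsilon)t}$; equivalently $\phi_-$ solves $\phi_-''-c\phi_-'-b\phi_-+\rho=0$ on $(-\infty,T_c)$, with $\rho(t):=(b+1)\phi_-(t)+M\chi(\lambda+\epsilon)e^{(\lambda+\epsilon)t}$ (recall $\chi(\lambda+\epsilon)<0$ since $\lambda<\lambda+\epsilon<\mu$). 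Hence, with $w:=u-\phi_-$, we get $w''-cw'-bw=\rho-r(\phi)$ on $(-\infty,T_c)$, and the whole matter reduces to the pointwise inequality $r(\phi)(t)\ge\rho(t)$ for $t<T_c$.

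To establish this, I would use the following bounds for $t<T_c$: $\phi(t)\le\phi_+(t)<1<\beta$, so $g_\beta(\phi(t))=\phi(t)$; by (\ref{1mo}) (here $c>2$ is used) together with $C\ge 0$, $\phi(t)\le\phi_+(t)\le e^{\lambda t}$; and $(\phi*K)(t)\le(\phi_+*K)(t)\le Le^{\epsilon t}\int_\R e^{-\epsilon s}K(s)\,ds$, the integral being finite because $K$ has compact support. Using also $\phi(t)\ge\phi_-(t)$, these give
$$
r(\phi)(t)=b\phi(t)+\phi(t)\bigl(1-(\phi*K)(t)\bigr)\ \ge\ (b+1)\phi_-(t)-\phi(t)(\phi*K)(t)\ \ge\ (b+1)\phi_-(t)-L\Bigl(\int_\R e^{-\epsilon s}K(s)\,ds\Bigr)e^{(\lambda+\epsilon)t},
$$
and by the choice of $M$, namely $-\chi(\lambda+\epsilon)\ge (L/M)\int_\R e^{-\epsilon s}K(s)\,ds$, the last term is $\ge M\chi(\lambda+\epsilon)e^{(\lambda+\epsilon)t}$; hence $r(\phi)(t)\ge\rho(t)$. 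Consequently $w''-cw'-bw=\rho-r(\phi)\le 0$ on $(-\infty,T_c)$, where $w$ is bounded and continuous on $(-\infty,T_c]$ with $w(-\infty)=0$ and $w(T_c)=u(T_c)\ge 0$. Since the zero-order coefficient $-b$ is negative, $w$ cannot have a negative interior minimum (at such a point $t_0$ one would have $w''(t_0)-cw'(t_0)-bw(t_0)\ge -bw(t_0)>0$), so $w\ge 0$ on $(-\infty,T_c)$, which is the desired lower bound.

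I expect the only genuine subtlety to be the matching of exponential rates at $-\infty$: the quadratic interaction term $\phi(t)(\phi*K)(t)$ must be reabsorbed into the negative quantity $M\chi(\lambda+\epsilon)e^{(\lambda+\epsilon)t}$, and this works precisely because near $-\infty$ the profile is controlled by $e^{\lambda t}$ via (\ref{1mo}) (not merely by $e^{\epsilon t}$) while the convolution decays like $e^{\epsilon t}$, so their product carries exactly the rate $e^{(\lambda+\epsilon)t}$ appearing in $\rho$. Everything else rests on the a priori bounds of Lemma \ref{ogran} and the explicit asymptotics of $\phi_\pm$; once $r(\phi)\ge\rho$ is in hand, the argument closes by the maximum principle for the operator $D^2-cD-b$.
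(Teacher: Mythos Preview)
Your argument is correct and follows essentially the same route as the paper: both reduce the lower bound to the same pointwise estimate (controlling $\phi(t)(\phi*K)(t)$ by $-M\chi(\lambda+\epsilon)e^{(\lambda+\epsilon)t}$ via the choice of $M$), and then conclude by a comparison argument. The only cosmetic difference is in that last step --- the paper introduces the intermediate source $\Gamma(s)=\phi_-(s)(b+1-(\phi_+*K)(s))$ and its Green-function representation $Q$, then appeals to the integral form of the sub-solution inequality (together with the positive jump $\Delta\phi_-'|_{T_c}>0$), whereas you apply the maximum principle for $D^2-cD-b$ directly to $w=A_m\phi-\phi_-$ on $(-\infty,T_c]$; both close the argument equally well.
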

\begin{proof} 
Due to Lemma \ref{usl}, it suffices to prove the first inequality in (\ref{ul2}) for $t \leq T_c$. 
Since $0< \phi(t) < 1 < \beta, \ t \leq T_c$, we have, for $t\leq T_c $, that 
$$
(A_m\phi)(t) \geq   \frac{1}{z_{12}}\left\{\int_{-\infty}^te^{z_1
(t-s)}r(\phi(s))ds + \int_t^{T_c}e^{z_2
(t-s)}r(\phi(s))ds \right\}=
$$
$$
\frac{1}{z_{12}}\left\{\int_{-\infty}^te^{z_1
(t-s)} \phi(s)(b +1-
(\phi * K)(s)) ds + \int_t^{T_c}e^{z_2
(t-s)}\phi(s)(b +1-
(\phi * K)(s)) ds \right\}\geq 
$$
$$ \frac{1}{z_{12}}\left\{\int_{-\infty}^te^{z_1
(t-s)} \Gamma(s) ds + \int_t^{T_c}e^{z_2
(t-s)}\Gamma(s) ds \right\}= 
$$
$$ \frac{1}{z_{12}}\left\{\int_{-\infty}^te^{z_1
(t-s)} \Gamma(s) ds + \int_t^{+\infty}e^{z_2
(t-s)}\Gamma(s) ds \right\} =: Q(t),
$$
where $\Gamma(s): = \phi_-(s)(b +1-
(\phi_+ * K)(s))$. 

In order to  estimate  $Q(t)$, we first find, for  $t \leq T_c$, that
$$
\phi_-''(t) - c\phi_-'(t)  -b\phi_-(t) + b\phi_-(t) + \phi_-(t)(1- (\phi_{+} * K)(t)) =
$$
$$
- \chi(\lambda+\epsilon) M e^{(\lambda +\epsilon)t} - (\phi_+ * K)(t)e^{\lambda t} (1- Me^{\epsilon t})\geq 
- \chi(\lambda+\epsilon) M e^{(\lambda +\epsilon)t} - e^{\epsilon t }e^{\lambda t} L { \int_{-\infty}^{\infty} e^{-\epsilon s} K(s) ds}= $$
$$ M e^{(\lambda +\epsilon)t}\left(
- \chi(\lambda+\epsilon)   - {\frac L M \int_{-\infty}^{\infty} e^{-\epsilon s} K(s) ds}\right) >0. 
$$
But then, rewriting the latter differential inequality in the equivalent  integral form  (see e.g.  \cite{ma} or \cite[Lemma 18]{TPT}) and using  the fact that 
$$\Delta\phi'_-|_{T_c} := \phi'_-(T_c^+) - \phi'_-(T_c^-) = - \phi'_-(T_c^-) >0, $$
we can conclude that $Q(t) \geq \phi_-(t), \ t \in \R$.  Hence, $(A_m\phi)(t) \geq  \phi_-(t), \ t \in \R$. \qed
\end{proof}

Next, with each  vector $\frak{m} = (\mu_1,\mu_2)$ we will associate the following Banach
spaces:
\[
C_{\frak{m}} = \{y \in C(\R, \R): |y|_{\frak{m}}: = \sup_{s \leq
0}e^{-\mu_2 s} |y(s)|+  \sup_{s \geq
0}e^{-\mu_1s} |y(s)| < + \infty  \},
\]
\[
C^1_{\frak{m}} = \{y \in C_{\frak{m}}: y' \in C_{\frak{m}}, \
|y|_{1,\frak{m}}: =  |y|_{\frak{m}} +  |y' |_{\frak{m}} < +\infty
\}. \]
\begin{remark}
Observe that $C_{\frak{m}}=C^0(\mu_2, \mu_1)$, $C^1_{\frak{m}}=C^1(\mu_2, \mu_1)$  in the notation of \cite[p. 185]{HL}.
\end{remark}
It is clear that,  in order to establish the existence of semi-wavefronts to equation (\ref{twe2mm}), it suffices to prove that 
the equation $A_m\phi=\phi$ has at least one  solution  from the set 
$$\frak{K} = \{x \in C_{\frak{m}}: \phi_-(t) \leq x(t) \leq  \phi_+(t), \ t \in \R\},$$ where 
$ {\frak{m}}= (\rho, \lambda/2)$ for some fixed $\rho >0$.  Observe that 
the convergence $x_n \to x$ in $\frak{K}$ is equivalent to the
uniform convergence  on compact subsets of
$\R$.
\begin{lem} Let $c >2$. Then $\frak{K}$ is a closed, bounded, convex subset of $C_{\frak{m}}$
and $A_m:\frak{K} \to \frak{K}$ is completely continuous.
\end{lem}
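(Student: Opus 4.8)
The plan is to dispatch the three elementary properties of $\frak{K}$ first, then prove that $A_m$ is completely continuous, which is where the structure of the integral operator is used. Convexity of $\frak{K}$ is immediate, since it is the intersection of the half-spaces $\{x:x(t)\ge\phi_-(t)\}$ and $\{x:x(t)\le\phi_+(t)\}$, $t\in\R$. For boundedness I would note that every $x\in\frak{K}$ obeys $0\le x\le\phi_+$, so $|x|_{\frak{m}}\le|\phi_+|_{\frak{m}}$, and that $\phi_+\in C_{\frak{m}}$ for $\frak{m}=(\rho,\lambda/2)$: near $-\infty$, $e^{-(\lambda/2)s}\phi_+(s)\le C(-s)^je^{(\lambda/2)s}\to0$ by the normalisation $\phi_+(t)=(-t)^je^{\lambda t}(1+o(1))$, while on $\R_+$, $e^{-\rho s}\phi_+(s)\le2\beta e^{-\rho s}$ is bounded; hence $\sup_{x\in\frak{K}}|x|_{\frak{m}}<\infty$. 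For closedness I would use the already recorded fact that on $\frak{K}$ convergence in $C_{\frak{m}}$ is the same as uniform convergence on compacts (the weights are bounded below on each $[-R,R]$), so the pointwise bounds $\phi_-\le x_n\le\phi_+$ and continuity survive in the limit, and a function squeezed between $\phi_-$ and $\phi_+$ lies in $C_{\frak{m}}$. The inclusion $A_m(\frak{K})\subseteq\frak{K}$ is precisely Lemma~\ref{nre} (under the standing hypotheses $c>2$, $\mathrm{supp}\,K$ compact, $b>2\beta+2$). I would also record $R_*:=\sup_{\R}R(\phi_+(\cdot))<\infty$ (since $0\le\phi_+\le2\beta$, $g_\beta\le\beta$); by the proof of Lemma~\ref{usl} this gives $0\le r(\phi(s))\le R_*$ for every $\phi\in\frak{K}$, so all integrals defining $A_m$ converge absolutely, uniformly over $\frak{K}$.

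For \emph{continuity}, let $\phi_n\to\phi$ in $\frak{K}$, i.e. uniformly on compacts. Compactness of $\mathrm{supp}\,K$ gives $\phi_n*K\to\phi*K$ uniformly on compacts, and then the Lipschitz continuity of $g_\beta$ together with the uniform bound $R_*$ gives $r(\phi_n)\to r(\phi)$ uniformly on compacts. In the formula for $A_m$ I would, for each fixed $t$, split $\int_{-\infty}^t$ at $t-N$ and $\int_t^{+\infty}$ at $t+N$: the two tails are $\le\mathrm{const}\cdot e^{-\delta N}$ with $\delta=\min\{|z_1|,z_2\}$, uniformly in $n$, while the two middle integrals tend to $0$ by the uniform convergence of $r(\phi_n)$ on the compact set swept by $s$. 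Letting first $n\to\infty$ and then $N\to\infty$, with all estimates uniform for $t$ in compacts, yields $A_m\phi_n\to A_m\phi$ in $\frak{K}$.

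For \emph{compactness}, differentiating $A_m\phi$ under the integral the two boundary contributions cancel, leaving
$$
(A_m\phi)'(t)=\frac{1}{z_{12}}\left\{z_1\int_{-\infty}^te^{z_1(t-s)}r(\phi(s))\,ds+z_2\int_t^{+\infty}e^{z_2(t-s)}r(\phi(s))\,ds\right\},
$$
whence $|(A_m\phi)'(t)|\le\frac{1}{z_{12}}\big(|z_1|\cdot|z_1|^{-1}+z_2\cdot z_2^{-1}\big)R_*=2R_*/z_{12}$ for all $t\in\R$ and all $\phi\in\frak{K}$. Thus $\{A_m\phi:\phi\in\frak{K}\}$ is uniformly Lipschitz and uniformly bounded; Arzel\`a--Ascoli plus a diagonal argument over $[-k,k]$, $k\in\N$, extracts from any sequence $(A_m\phi_n)$ a subsequence converging uniformly on compacts to some $y$, and $y$ is still squeezed between $\phi_-$ and $\phi_+$, hence $y\in\frak{K}$ and the convergence holds in $C_{\frak{m}}$. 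So $A_m(\frak{K})$ is relatively compact in $C_{\frak{m}}$, and together with the continuity just proved, $A_m:\frak{K}\to\frak{K}$ is completely continuous.

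No individual step is deep; the point to handle with care is the interplay between the weighted norm $|\cdot|_{\frak{m}}$ and the a priori weaker topology of uniform convergence on compacts. The key observation — which I would make explicit — is that on $\frak{K}$ the behaviour at $\pm\infty$ is already pinned down uniformly by the envelopes $\phi_\pm$, and this is exactly what allows the equicontinuity-on-compacts coming from the derivative bound to upgrade to relative compactness of $A_m(\frak{K})$ in $C_{\frak{m}}$.
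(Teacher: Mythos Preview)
Your proposal is correct and follows essentially the same approach as the paper: both use Lemma~\ref{nre} for $A_m(\frak{K})\subset\frak{K}$, both obtain a uniform bound on $(A_m\phi)'$ and invoke Arzel\`a--Ascoli for relative compactness, and both exploit the equivalence on $\frak{K}$ of $C_{\frak{m}}$-convergence with local uniform convergence. The only minor stylistic difference is in the continuity step: the paper uses Lebesgue dominated convergence to get $(A_mx_j)(t)\to(A_mx_0)(t)$ pointwise and then appeals to the already-established precompactness of $\{A_mx_j\}$ to upgrade to convergence in $\frak{K}$, whereas you do a direct tail-splitting estimate; both are valid, the paper's route being slightly shorter.
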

\begin{proof} By the previous lemma, $A_m(\frak{K}) \subset \frak{K}$. It is also obvious that $\frak{K}$ is a  closed, bounded, convex subset of $C_{\frak{m}}$. Since 
\begin{equation}\label{AA}
|x(t)|+ |(A_mx)'(t)|\leq 2\beta(1+z_{12}),\  \mbox{for all} \   x \in \frak{K},  
\end{equation}
due to the Ascoli-Arzel${\rm
\grave{a}}$ theorem  $A_m(\frak{K})$ is relatively compact in $\frak{K}$ . 
Next, by Lebesgue's dominated convergence theorem, if $x_j\to  
x_0$ in $\frak{K}$ then 
$(A_mx_j )(t) \to  
(A_mx_0 )(t)$ at every $t \in  
\R$. The precompactness of 
$\{A_mx_j \} \subset \frak{K}$ assures that, in 
fact, $A_mx_j\to A_mx_0$ in $\frak{K}$. Hence, the map $A_m: \frak{K}\to \frak{K}$ is completely  continuous. \qed
\end{proof}
The final steps of the proof of Theorem \ref{Te1} are contained  in the following proposition.
\begin{theorem} \label{34} Assume that  $c\geq 2$. Then the integral equation
$A_m\phi=\phi$  has at least one  positive bounded solution in $\frak{K}$.
\end{theorem}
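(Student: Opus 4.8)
The plan is to prove the statement first in the easy regime $c>2$ with $K$ of compact support, where it follows at once from the lemmas already established, and then to reach the full range $c\ge2$, $K\in\mathcal{K}$ by an approximation argument. In the first regime, Lemmas \ref{usl} and \ref{nre} show that $\frak{K}$ is nonempty (it contains $\phi_-$), closed, bounded and convex in $C_{\frak{m}}$ and that $A_m$ maps $\frak{K}$ into itself, while the lemma preceding this proposition shows $A_m:\frak{K}\to\frak{K}$ is completely continuous; Schauder's fixed point theorem then provides $\phi=A_m\phi\in\frak{K}$. Differentiating the identity $A_m\phi=\phi$ twice identifies $\phi$ as a solution of (\ref{twe2mm}), hence — with $\beta$ as in Corollary \ref{twoe} — of (\ref{twe2m}); since $0\not\equiv\phi_-\le\phi\le\phi_+$ and $\phi_+(-\infty)=0$, this $\phi$ is bounded, non-constant, satisfies $\phi(-\infty)=0$, is positive by Lemma \ref{po}, and has $\liminf_{t\to+\infty}\phi(t)>0$ by Lemma \ref{bebe}. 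Thus it is a positive bounded solution of $A_m\phi=\phi$ lying in $\frak{K}$ — the assertion in this regime — and moreover a genuine semi-wavefront of (\ref{twe2m}), which is what the proof of Theorem \ref{Te1} will use.

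For the general case (where $\frak{K}$ and $\phi_\pm$ are no longer available, so the conclusion is to be read as: equation (\ref{twe2m}), equivalently (\ref{twe2an}), has a positive bounded semi-wavefront) I would fix $c\ge2$ and $K\in\mathcal{K}$, choose $c_n\downarrow c$ with $c_n>2$ (constant if $c>2$) and compactly supported $K_n\in\mathcal{K}$ — for instance the $L^1$-normalised restrictions of $K$ to $[-n,n]$, adjusted harmlessly when $\int_0^{+\infty}K=0$ — so that $\|K_n-K\|_1\to0$, $\int_0^{+\infty}K_n\to\int_0^{+\infty}K$, and $\sup_n\int_{|s|>T}K_n(s)\,ds\to0$ as $T\to+\infty$. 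By the first regime, each $(c_n,K_n)$ carries a positive bounded semi-wavefront $\phi_n$ of the corresponding equation (\ref{twe2m}). The decisive normalisation is to recentre $\phi_n$ at its \emph{first} upcrossing of level $1/2$: since $\phi_n(-\infty)=0$ while $\sup_{\R}\phi_n>1/2$ (otherwise $\phi_n\le1$ and Lemma \ref{po} would force $\phi_n(+\infty)=1$), the number $\sigma_n:=\inf\{t:\phi_n(t)=1/2\}$ is finite with $\phi_n(\sigma_n)=1/2$ and $\phi_n<1/2$ on $(-\infty,\sigma_n)$; replacing $\phi_n$ by $\psi_n:=\phi_n(\cdot+\sigma_n)$, which by translation invariance still solves (\ref{twe2m}) with kernel $K_n$, I may assume $\psi_n(0)=1/2$ and $\psi_n(t)<1/2$ for $t<0$.

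Next I would pass to the limit. Lemma \ref{ogran} and the asserted local continuity of $U$ give $B:=\sup_n U(c_n,K_n)<\infty$, so $0<\psi_n\le B$; taking $\beta>B$ switches off the truncation and makes each $\psi_n$ a solution of (\ref{twe2an}) with kernel $K_n$. Lemma \ref{po} yields $\psi_n'<\lambda(c_n)\psi_n$, a matching lower bound for $\psi_n'$ follows from $0\le\psi_n\le B$, and the equation then bounds $\psi_n''$, so $\{\psi_n\},\{\psi_n'\},\{\psi_n''\}$ are uniformly bounded on $\R$; by Arzel\`a--Ascoli and a diagonal extraction a subsequence converges in $C^2_{\mathrm{loc}}(\R)$ to some $\psi\in C^2(\R)$. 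Splitting the convolutions at $|s|=T$ and using the uniform sup bound, the uniform tail smallness of $K_n$, and $\|K_n-K\|_1\to0$, one checks $(\psi_n*K_n)\to(\psi*K)$ locally uniformly, so $\psi$ solves (\ref{twe2an}) with kernel $K$ and speed $c$; moreover $\psi$ is bounded by $B$, nonnegative, positive by Lemma \ref{po}, non-constant (since $\psi(0)=1/2\notin\{0,1\}$), and $\psi\le1/2$ on $(-\infty,0)$.

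The remaining point — and the one I expect to be the main obstacle — is to show $\psi(-\infty)=0$; once this holds, $\liminf_{t\to+\infty}\psi(t)>0$ and $\limsup_{t\to+\infty}\psi(t)\le B$ are automatic (Lemma \ref{bebe} and boundedness), so $\psi$ is a semi-wavefront and, by Corollary \ref{twoe}, Theorem \ref{Te1} is proved. I would use the dichotomy of Lemma \ref{bebe}: if $\liminf_{t\to-\infty}\psi(t)=0$, that lemma gives $\psi(-\infty)=0$ directly. If instead $\ell:=\liminf_{t\to-\infty}\psi(t)>0$, I claim a contradiction. First, $\psi$ cannot be eventually monotone at $-\infty$: a bounded monotone solution of (\ref{twe2an}) converges there to a root of $z(1-z)=0$ (an elementary argument using that $\psi,\psi',\psi''$ are bounded), i.e. to $0$ or $1$, and $\ell>0$ rules out $0$ while $\psi\le1/2$ on $\R_-$ rules out $1$. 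Hence $\psi$ has local minima $t_j\to-\infty$, which may be chosen with $\psi(t_j)\to\ell$; evaluating (\ref{twe2an}) at $t_j$ (where $\psi'(t_j)=0$, $\psi''(t_j)\ge0$, $\psi(t_j)>0$) gives $(\psi*K)(t_j)\ge1$, whereas bounding the tails of that convolution by $B\int_{|s|>T}K$ and letting $T\to\infty$, $j\to\infty$ forces $\limsup_{t\to-\infty}\psi(t)\ge1$, contradicting $\psi\le1/2$ on $(-\infty,0)$. Thus the case $\ell>0$ is impossible, $\psi(-\infty)=0$, and the proof is complete. The essential device is the recentring at the first $1/2$-upcrossing, which survives the limit as the one-sided bound $\psi\le1/2$ on $(-\infty,0)$ and is exactly what turns the local-extremum inequality at $-\infty$ into a contradiction.
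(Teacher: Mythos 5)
Your proposal is correct and takes essentially the same route as the paper: Schauder's fixed point theorem on $\frak{K}$ for $c>2$ with compactly supported $K$, followed by a compactness/limit argument in which the approximating profiles are normalised at their first crossing of the level $1/2$, with Lemmas \ref{po}, \ref{bebe} and \ref{ogran} supplying positivity, the behaviour at $\pm\infty$ and the uniform bound. The only real deviation is the final step $\liminf_{t\to-\infty}\psi=0$: the paper assumes a positive lower bound $k_0$ on $\R_-$ and shows the reaction term then forces $\psi'(t)\to+\infty$ as $t\to-\infty$, whereas you split into eventually-monotone versus oscillating behaviour and evaluate the equation at local minima — both arguments are valid and rest on the same one-sided bound $\psi\le 1/2$ on $\R_-$ inherited from the normalisation.
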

\begin{proof}  Assume first that $K$ has a  compact support. 
If $c>2$ then, due to the previous  lemma, we can apply Schauder's fixed point
theorem to $A_m:\frak{K} \to \frak{K}$ that guarantees the existence of a fixed point for $A_m$ in $\frak{K}$, which is a semi-wavefront profile for equation (\ref{17nl}). Let now $c=2$ and consider $c_j:= 2+1/j$. Since $c_j >2$,  we already know that for each $j$ there exists a semi-wavefront $\phi_j$ of equation (\ref{twe2mm}): we can 
normalise it by the condition $\phi_j(0)= 1/2 = \max_{s \leq 0} \phi_j(s)$. It is clear from (\ref{AA}) that the set 
$\{\phi_j, j \geq 0\}$ is precompact in the compact-open topology of $C_b(\R,\R)$  and therefore we can also assume that $\phi_j \to \phi_0$ uniformly on compact subsets of $\R$, where 
$\phi_0(0) =1/2= \max_{s \leq 0} \phi_0(s)$.    In addition, 
$R_j(s):= r(\phi_j(s)) \to R_0(s):= r(\phi_0(s))$ for each fixed $s \in \R$. The sequence 
$\{R_j(t)\}$ is also uniformly bounded on $\R$. All this allows us to apply Lebesgue's dominated convergence theorem in  
$$(A_{m,j}\phi_j)(t) := \frac{1}{\epsilon_j'}\left\{\int_{-\infty}^te^{z_{1,j}
(t-s)}R_j(s)ds + \int_t^{+\infty}e^{z_{2,j}
(t-s)}R_j(s)ds \right\} = \phi_j(t), 
$$
where $z_{1,j} <0< z_{2,j}$ satisfy $z^2-c_jz -b =0$. 
In this way we obtain that $A_m\phi_0 = \phi_0$ with $c=2$ and therefore $\phi_0$ is a non-negative solution 
of equation (\ref{twe2an}) satisfying condition $\phi_0(0)=1/2= \max_{s \leq 0} \phi_0(s)$.  Lemma \ref{po} shows 
that actually $\phi_0(t)>0$ for all $t\in \R$.  We claim, in addition, that $\inf_{s \leq 0}\phi_0(s) =0$ and therefore 
$\phi_0(-\infty)=0$ in view of  Lemma  \ref{bebe}. Indeed, otherwise there exists a positive $k_0$ such that 
$k_0\leq  \phi_0(t) \leq 1/2$ for all $t \leq 0$. This implies immediately 
that $k_0/4 \leq  a(t):= \phi_0(t)(1- (\phi_0*K)(t)) \leq 3/4$ for all sufficiently large negative $t$ (say, for $t \leq t_0$).  But then 
$$
\phi_0'(t) = \phi_0'(t_0) + c(\phi_0(t)-\phi_0(t_0)) +\int^{t_0}_ta(u)du \to +\infty \ \mbox{as} \ t \to -\infty, 
$$
contradicting the positivity of $\phi_0(t)$.  In consequence,  $\phi_0$ is a semi-wavefront.

Finally, in order to prove the theorem for general kernels, we can use a similar argument by constructing a sequence of compactly supported  kernels $K_j$ converging monotonically to $K$.  Indeed, set  $K_j (s) = K(s) +  \left(\int_{-\infty}^{-j} K(s) ds + \int_j^{\infty} K(s) ds\right)/(2j)$  for $s \in [-j,j]$, and set $K_j (s) = 0 $ otherwise.  
As we already proved,  for each  fixed $c \geq 2$ and $K_j$   there exists  a semi-wavefront  $\phi_j$ propagating with the velocity $c$ and satisfying the condition $\phi_j(0)= 1/2 = \max_{s \leq 0} \phi_j(s)$. 
Due to Lemma \ref{ogran}, $0< \phi_j(t) \leq U(c,K_j)$ for all $t \in \R$. 
By using the explicit form of $U(c,K_j)$ given in Lemma \ref{ogran}, it is easy to show that the sequence 
$\{\phi_j(t)\}$ is uniformly bounded on $\R$.  Thus the sequence 
$\{\phi'_j(t)\}$ is uniformly bounded on $\R$ as well, so  we can assume that 
 $\phi_j \to \phi_0 \in C_b(\R,\R)$ uniformly on compact subsets of $\R$. But then 
$\phi_0(0) =1/2= \max_{s \leq 0} \phi_0(s)$ so that, as we  have  recently seen, $\phi_0(x+ct)$ must be a semi-wavefront for equation (\ref{17nl}). 
\qed
\end{proof}

\section{Proof of the first part of Theorem \ref{Te2}} \label{FP}
In Sections \ref{FP} and \ref{S5}, we show  that the non-local KPP-Fisher equation (\ref{17nl}) can possess multiple wavefront solutions.  It is convenient to split our proof  into two stages. In the next section, we are doing all standard technical work  related to the application of the  Lyapunov-Schmidt reduction. This allows us to focus our attention in the present section on the new ideas of the proof. 

We start by analysing  zeros  of the function 
$\chi_1(z) := z- \exp(-z \tau)$: 
\begin{lem}  \label{hub} The function $\chi_1(z)$ has exactly three simple zeros (denoted as $z_1(\tau) \in (0,1), $  $z_2(\tau)$ and $z_3(\tau)= \bar z_2(\tau) \in \C $) in the half-plane $\{\Re z \geq 0\}$  and does not have any root on the imaginary axis  $\{\Re z = 0\}$  if and only if $\tau \in  (3\pi/2, 7\pi/2).$ Furthermore, $\Re z_2(\tau) < z_1(\tau)$. 
\end{lem}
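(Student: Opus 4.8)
The plan is to regard $\chi_1(z)=z-e^{-z\tau}$ as the characteristic function of the scalar delay differential equation with positive feedback $\dot x(t)=x(t-\tau)$ and to carry out a crossing analysis in the parameter $\tau>0$. First, on the real line $\chi_1'(x)=1+\tau e^{-x\tau}>0$, so $\chi_1|_{\R}$ is strictly increasing; since $\chi_1(0)=-1<0$ and $\chi_1(1)=1-e^{-\tau}>0$, there is a unique real zero $z_1(\tau)$, lying in $(0,1)$. Next I would record the a priori confinement: if $\chi_1(z)=0$ and $\Re z\ge0$, then $|z|=|e^{-z\tau}|=e^{-\tau\Re z}\le1$, so every zero in the closed right half-plane lies in the closed unit disk; in particular there are only finitely many such zeros, and none can escape to, or appear from, infinity while remaining in $\{\Re z\ge0\}$. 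Finally, a multiple zero would satisfy $\chi_1(z)=\chi_1'(z)=0$, i.e. $z=e^{-z\tau}$ and $\tau e^{-z\tau}=-1$, which forces $z=-1/\tau<0$; hence every zero with $\Re z\ge0$ is simple.

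The second step is the classification of purely imaginary zeros. Writing $z=iy$, the identity $iy=\cos(y\tau)-i\sin(y\tau)$ forces $\cos(y\tau)=0$ and $y=-\sin(y\tau)$; since $\cos(y\tau)=0$ implies $\sin(y\tau)=\pm1$, this yields $y=\mp1$ and then $\cos\tau=0$, $\sin\tau=-1$, that is, $\tau\in\{3\pi/2+2k\pi:k=0,1,2,\dots\}$, the purely imaginary zeros at such $\tau$ being exactly $\pm i$. In particular $\chi_1$ has no zero on the imaginary axis whenever $\tau\in(0,3\pi/2)$ or $\tau\in(3\pi/2,7\pi/2)$.

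Now I would count the zeros. Let $N(\tau)$ be the number of zeros of $\chi_1(\cdot;\tau)$ in $\{\Re z\ge0\}$, counted with multiplicity. By continuous dependence of the zeros on $\tau$ (argument principle over the right half of a disk of radius slightly larger than $1$, using the confinement bound to control the circular part of the contour and the previous step to control the diameter), together with the fact that zeros meet the imaginary axis only at the values $3\pi/2+2k\pi$, the integer $N(\tau)$ is constant on each of the intervals $(0,3\pi/2)$, $(3\pi/2,7\pi/2)$, $(7\pi/2,11\pi/2),\dots$. At $\tau=0$ the only zero is $z=1$, so $N\equiv1$ on $[0,3\pi/2)$. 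At a crossing value $\tau^\ast=3\pi/2+2k\pi$ the simple zeros $\pm i$ continue, by the implicit function theorem, to $C^1$ branches $z(\tau)$; differentiating $z-e^{-z\tau}=0$ and substituting $e^{-z\tau}=z$ at a zero gives $z'(\tau)=-z^2/(1+\tau z)$, so at $z=iy$ with $y=\pm1$ one obtains $\Re z'(\tau^\ast)=\Re\!\big(y^2/(1+i\tau^\ast y)\big)=y^2/(1+(\tau^\ast y)^2)>0$. Thus both branches cross from $\{\Re z<0\}$ into $\{\Re z>0\}$ as $\tau$ increases through $\tau^\ast$, so $N$ jumps by exactly $2$ there; hence $N(\tau)=1$ on $(0,3\pi/2)$, $N(\tau)=3$ on $(3\pi/2,7\pi/2)$ and $N(\tau)\ge5$ for $\tau>7\pi/2$. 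Combining with the imaginary-axis classification, the conjunction ``$\chi_1$ has exactly three zeros in $\{\Re z\ge0\}$ and none on the imaginary axis'' holds precisely for $\tau\in(3\pi/2,7\pi/2)$; these three zeros are simple by the first step, and since $\overline{\chi_1(z)}=\chi_1(\bar z)$ one of them is the real zero $z_1(\tau)\in(0,1)$ while the other two form a conjugate pair $z_2(\tau),z_3(\tau)=\bar z_2(\tau)$.

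It remains to prove the dominance $\Re z_2(\tau)<z_1(\tau)$, which I would do for every non-real zero $z=x+iy$ at once: from $x=\Re e^{-z\tau}=e^{-x\tau}\cos(y\tau)$, if $\cos(y\tau)\le0$ then $x\le0<z_1(\tau)$, whereas if $\cos(y\tau)>0$ then necessarily $\cos(y\tau)<1$ (otherwise $\sin(y\tau)=0$, and the imaginary part of $z=e^{-z\tau}$ forces $y=0$), so $x<e^{-x\tau}$, i.e. $\chi_1(x)<0=\chi_1(z_1(\tau))$, whence $x<z_1(\tau)$ by strict monotonicity of $\chi_1|_{\R}$. The only genuinely delicate points in this scheme are verifying that $N(\tau)$ is locally constant away from the crossing values — which rests on the confinement $|z|\le1$ in $\{\Re z\ge0\}$ ruling out zeros drifting in from infinity — and fixing the sign in the transversality computation $\Re z'(\tau^\ast)>0$; everything else is elementary.
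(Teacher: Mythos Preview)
Your argument is correct and follows essentially the same crossing-analysis strategy as the paper: classify the purely imaginary zeros, show they occur exactly at $\tau=3\pi/2+2k\pi$, verify transversality $\Re z'(\tau^*)>0$, and track the integer count $N(\tau)$ across these values. The paper initializes the count via Rouch\'e's theorem on a domain bounded by $\{\Re z=-2\}$ and a large circle, whereas you initialize at $\tau=0$ and use the confinement bound $|z|\le1$ on $\{\Re z\ge0\}$ to rule out zeros escaping to infinity; both are fine.

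One place where your route is genuinely different---and cleaner---is the inequality $\Re z_2(\tau)<z_1(\tau)$. The paper argues by continuity: the inequality holds just after $\tau=3\pi/2$, and if $\Re z_2(\tau_0)=z_1(\tau_0)$ ever occurred then $|z_2|=e^{-\tau_0\Re z_2}=e^{-\tau_0 z_1}=z_1$, forcing $\Im z_2=0$, a contradiction. Your argument is direct and parameter-free: for any non-real zero $z=x+iy$, the relation $x=e^{-x\tau}\cos(y\tau)$ with $\cos(y\tau)<1$ (else $y=0$) gives $\chi_1(x)<0=\chi_1(z_1(\tau))$, and monotonicity of $\chi_1|_{\R}$ yields $x<z_1(\tau)$. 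This avoids the connectedness bookkeeping and in fact shows $z_1(\tau)$ dominates \emph{every} non-real zero in the right half-plane, not just the pair $z_2,z_3$.
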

\begin{proof} By applying the Rouch\'e theorem in the domains $D_R\ni \{0\}$ bounded by the graphs of $\{\Re z =-2\}$ and 
$\{|z|=R\}$, $R>0$,  we easily find that  the half-plane $\{\Re z > -2\}$ contains only one 
zero $z_1$ of $\chi_1(z)$ for every $\tau \in [0, 0.5\ln 2)$.  It is clear that $z_1>0$ if $\tau>0$.  Since $\tau >0$, all zeros of $\chi_1(z)$ are simple. This means that when $\tau$ is increasing from the initial value $0.5\ln 2$, each new pair of roots appearing in the half-plane $\{\Re z >0\}$ should cross the imaginary axis $\{\Re z =0\}$ at some moment $\tau_n$.  It is easy to check that 
the first pair of complex conjugated roots $z_2(\tau), z_3(\tau)$ will cross transversally $\{\Re z =0\}$ at the point $\tau = 3\pi/2$ with the velocity $\Re z_j'(\tau)|_{\tau = 3\pi/2} >0$.  The same happens with each other pair of roots 
crossing $\{\Re z =0\}$ at the moments $\tau_n = 3\pi/2 + 2\pi n$.  Finally, $\Re z_2(\tau) < z_1(\tau)$ for all $\tau$ such that $\tau- 3\pi/2$ is small and positive.  If $\Re z_2(\tau_0) =z_1(\tau_0)$ then $|z_2|=|\exp(-z_2\tau)|=|\exp(-z_1\tau)|=z_1$ so that $\Im z_2 =0$, a contradiction. 
\qed
\end{proof}
\begin{theorem} \label{Te2r} For each $\tau \in  (3\pi/2, 7\pi/2)$ there is $c_*(\tau)>2$ and an open subset $\Omega$ of $\R^3$ such that, for each fixed $c > c_*(\tau)$, 
the KPP-Fisher equation with advanced argument 
\begin{equation}\label{adv}
u_t(t,x) = u_{xx}(t,x)  + u(t,x)(1-u(t+\tau,x)), \ u \geq 0,\ x \in
\R,  \end{equation}  
has a three-dimensional family $u(t,x) = \phi( x+ct, \zeta,c), \ {\bf \zeta} \in \Omega,$ of wavefronts.   For each fixed $c>c_*(\tau)$,  $\phi$ maps $\Omega$ continuously  and injectively  into $C_b(\R,\R)$ and  contains a unique (up to a translation) monotone wavefront. 
\end{theorem}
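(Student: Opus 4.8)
\medskip
\noindent\textbf{Proof plan.}
A front $u(t,x)=\phi(x+ct)$ of (\ref{adv}) has a profile $\phi=\phi(s)$, $s=x+ct$, satisfying the advanced differential equation
$$
\phi''(s)-c\phi'(s)+\phi(s)\bigl(1-\phi(s+c\tau)\bigr)=0,\qquad \phi(-\infty)=0,\ \phi(+\infty)=1,
$$
that is, equation (\ref{twe2an}) with the Borel kernel $K(s)=\delta(s+c\tau)$; note that the shift appearing in the profile equation is $c\tau$, not $\tau$. Rescaling $\sigma=s/c$ and setting $\varepsilon=1/c$, $\Psi(\sigma)=\phi(c\sigma)$, this turns into the singularly perturbed problem $\varepsilon^{2}\Psi''-\Psi'+\Psi\bigl(1-\Psi(\cdot+\tau)\bigr)=0$, whose reduced ($\varepsilon=0$) equation is the first order \emph{advanced} logistic equation $\Psi'(\sigma)=\Psi(\sigma)\bigl(1-\Psi(\sigma+\tau)\bigr)$. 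The plan is to treat $\varepsilon\to0^{+}$ (equivalently $c\to+\infty$) as the perturbation parameter while $\tau$ stays fixed.

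First I would study the heteroclinic solutions $0\to1$ of the reduced equation. Linearizing at the equilibrium $\Psi\equiv1$ gives $w'(\sigma)=-w(\sigma+\tau)$, whose characteristic equation $\zeta+e^{\zeta\tau}=0$ becomes, under $\zeta\mapsto-z$, exactly $\chi_1(z):=z-e^{-z\tau}=0$. Lemma \ref{hub} then tells us that for $\tau\in(3\pi/2,7\pi/2)$ the function $\chi_1$ has precisely three zeros with $\Re z\ge0$ — the real $z_1(\tau)\in(0,1)$ and a conjugate pair $z_2(\tau),\bar z_2(\tau)$ with $0<\Re z_2(\tau)<z_1(\tau)$ — and no zero on $\{\Re z=0\}$. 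Consequently the equilibrium $1$ has a three-dimensional local stable manifold for the reduced advanced equation (the characteristic values in its stable spectrum are $-z_1,-z_2,-\bar z_2$, all with negative real part, the pair $-z_2,-\bar z_2$ being purely imaginary at $\tau=3\pi/2$ and having small negative real part once $\tau>3\pi/2$), while the equilibrium $0$ has a one-dimensional unstable manifold. Reversing $\sigma$ turns the reduced equation into a monotone positive-feedback delay equation, so I would invoke the Mallet-Paret--Sell theory of monotone cyclic feedback systems \cite{mps,mps2} and the Krisztin--Walther--Wu description of the invariant stratification \cite{KWW} to single out, inside this picture, a unique (up to translation of $\sigma$) monotone heteroclinic $\Psi_{*}$, tangent at $+\infty$ to the principal direction $e^{-z_1\sigma}$, together with a genuinely two-parameter family of oscillating heteroclinics indexed by the amplitude and phase of the leading oscillatory mode $\propto e^{-\Re z_2\cdot\sigma}\cos(\Im z_2\cdot\sigma+\mathrm{const})$; together with translation this yields a three-parameter family $\Psi(\cdot;\zeta)$, $\zeta$ in an open subset of $\R^{3}$.

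Next, for small $\varepsilon>0$ I would recast the whole-line problem as a Lyapunov--Schmidt reduction in the exponentially weighted spaces $C_{\frak m}$, $C^{1}_{\frak m}$ introduced above, with weights chosen so that the linearization about a heteroclinic has exponential dichotomies at $\pm\infty$, governed at $-\infty$ by $z^{2}-cz+1=0$ (roots $\lambda(c),\mu(c)$) and at $+\infty$ by $z^{2}-cz-e^{zc\tau}=0$. Since the advanced argument precludes an initial value formulation — there is no associated functional differential semiflow — I would follow the Hale--Lin method as adapted to singular functional differential equations in \cite{fhw,FTnl,GTLMS}, combined with the Magalh\~aes--Faria normal form \cite{FM} at the steady state $1$ and the Hale--Huang analysis of perturbed (here exponentially decaying rather than periodic) solutions \cite{DH,Hale,hale,HW}. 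The aim is to show: (i) after the rescaling, for $c$ large the characteristic equation $z^{2}-cz-e^{zc\tau}=0$ has exactly three roots with $\Re z\le0$, all of modulus $O(1/c)$ and asymptotic to $-z_1(\tau)/c,\,-z_2(\tau)/c,\,-\bar z_2(\tau)/c$, every other root having $\Re z$ bounded below by a positive multiple of $1/c$; (ii) hence the linearized operator at the persisting heteroclinic is Fredholm with a three-dimensional kernel — the translation mode $\phi'$ and the real and imaginary parts of the surviving $z_2$-mode — and trivial cokernel; (iii) the implicit function theorem then gives, for every $c>c_{*}(\tau)$, a three-dimensional $C^{1}$ family $\{\phi(\cdot;\zeta,c):\zeta\in\Omega\}$ of heteroclinics converging after rescaling to $\Psi(\cdot;\zeta)$, the dependence on $\zeta$ (and $c$) being continuous and injective into $C_b(\R,\R)$ because distinct parameters give either distinct translates or distinct asymptotic amplitude--phase data of the oscillating tail, which are read off continuously from the profile. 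Finally, the existence and uniqueness within the class of all monotone wavefronts of the monotone member is supplied directly by the Fang--Zhao criterion \cite{FZ}: for $K(s)=\delta(s+c\tau)$ equation (\ref{fzc}) reads $\lambda^{2}-c\lambda-e^{\lambda c\tau}=0$, which equals $-1$ at $\lambda=0$ and tends to $+\infty$ as $\lambda\to-\infty$, hence has a negative root for every $c\ge2$; the unique monotone wavefront it provides is the persistence of $\Psi_{*}$.

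The principal obstacle is the singular-perturbation core of step (iii): making rigorous the passage from the second order singularly perturbed advanced equation to the first order reduced advanced equation, and proving that the \emph{entire} three-dimensional heteroclinic manifold — not merely one connection — persists, while pinning down the Fredholm index exactly. The mixed (advanced) type of the equation forces the whole-line Fredholm/exponential-dichotomy bookkeeping, in which the index must be tied precisely to the zero count of $\chi_1$ furnished by Lemma \ref{hub}; one must also use the transversality of the $\tau=3\pi/2$ crossing (already recorded in Lemma \ref{hub} through $\Re z_j'(\tau)|_{\tau=3\pi/2}>0$) to obtain a genuine embedded $C^{1}$ three-manifold, and confirm that the cokernel is trivial — that no hidden solvability condition survives — which is exactly the point at which the a priori existence of the monotone wavefront (via Fang--Zhao) enters.
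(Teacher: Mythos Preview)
Your overall architecture---singular perturbation in $1/c$, a first-order reduced equation, Hale--Lin exponential-dichotomy/Fredholm bookkeeping, Lyapunov--Schmidt reduction, with the dimension $3$ traced to the three roots of $\chi_1$ supplied by Lemma~\ref{hub}---is the paper's strategy. The paper's execution is cleaner in one respect: it sets $y(t)=1-\phi(-ct)$, $\varepsilon=c^{-2}$, obtaining the \emph{delayed} problem $\varepsilon y''+y'-y(t-\tau)(1-y)=0$, $y(-\infty)=0$, $y(+\infty)=1$. This puts the three unstable roots of $\chi_1$ at the equilibrium $y=0$ (i.e.\ at $-\infty$), while $y=1$ is governed simply by $w'=-w$; the Hale--Lin theory then applies verbatim. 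You land on an advanced equation and only note in passing that reversal gives a delayed one; carrying that reversal through \emph{is} the paper's substitution. The paper also needs only \emph{one} reduced heteroclinic $\phi_0$ (from \cite{FT}, since every datum in $C([-\tau,0],[0,1])$ produces a monotone solution tending to $1$); the three-dimensional family then falls out of the Lyapunov--Schmidt reduction because $\dim\ker T_{\frak m}=r_{\frak m}=3$. No appeal to \cite{KWW} or \cite{mps,mps2} is made in this part of the argument.

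There is one genuine misconception. The triviality of the cokernel does \emph{not} come from Fang--Zhao or from any a priori existence result: knowing a solution exists says nothing about the linearized cokernel. In the paper (Lemma~\ref{L3.2}) it is a direct Hale--Lin index computation: $\mathrm{Ind}\,T_{\frak m}=\dim E_{\mu_2}^{-}-\dim E_{\mu_1}^{+}=3-0$ and separately $\dim\ker T_{\frak m}=\dim E_{\mu_2}^{-}=3$, whence $\mathrm{codim}\,\mathrm{Im}\,T_{\frak m}=0$. Similarly, uniqueness of the monotone front \emph{within the family} is not obtained via Fang--Zhao (which as stated is for $K\in L^1$, not $K=\delta(\cdot+c\tau)$) but internally: choosing a larger weight $\mu_2'\in(\Re z_2,z_1)$ isolates a one-dimensional subfamily $\mathcal F(\mu_2')$; a no-small-solutions argument forces the leading asymptotic $Ce^{z_1 t}$ with $C\ne0$, and a direct critical-point analysis then gives global monotonicity of every member of $\mathcal F(\mu_2')$, while every $y\in\mathcal F(\mu_2)\setminus\mathcal F(\mu_2')$ has leading term governed by the complex pair $z_2,\bar z_2$ and hence oscillates. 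You also omit the positivity step $\phi>0$, which the paper checks via the integral representation (\ref{iie37}).
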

\begin{proof}  By the definition,  every wavefront profile $\phi$ to equation (\ref{adv}) is a solution 
of the nonlinear boundary value problem 
\begin{equation}\label{KPPadv}
\phi''(t) - c\phi'(t) + \phi(t)(1-\phi(t+c\tau)) =0,  \quad \phi(-\infty) =0,\ \phi(+\infty)=1, \  \phi(t) >0.
\end{equation}
By setting $\epsilon = c^{-2} >0$ and realizing the  change of variables
$
y(t) = 1- \phi(-ct), 
$
we transform (\ref{KPPadv}) into the following  equivalent form:
\begin{equation}\label{pfe}
\epsilon y''(t) +y'(t) -y(t-\tau)(1-y(t)) =0,   \ y(-\infty) =0,\ y(+\infty)=1, \ y(t) <1.
\end{equation}
Taking $\epsilon =0$ in (\ref{pfe}), we obtain the first order system
\begin{equation}\label{pfel}
y'(t) = y(t-\tau)(1-y(t)),   \ y(-\infty) =0,\ y(+\infty)=1, \ y(t) <1, \ t \in \R.
\end{equation}
It is easy to see that the condition $y(t) <1$ in (\ref{pfel}) is redundant. Indeed,  if $y(t_0) =1$ at some 
leftmost point $t_0,$ then the function $z(t) =1-y(t)$ solves the linear non-autonomous equation 
$
z'(t) = -\hat a(t) z(t),\ z(t_0) =0, 
$
where $\hat a(t):= y(t-\tau)$ is bounded and continuous on $\R$. But then $z(t) \equiv 0$ and, in consequence, 
$y(t)\equiv 1$,  a contradiction. 

Furthermore,  for each nontrivial initial function $a \in C([-\tau, 0], [0,1])$, the Cauchy problem 
$y'(t) = y(t-\tau)(1-y(t)),   \ y(s)= a(s), \ s \in [-\tau,0]$, has a unique monotone solution  converging to 
$1$ as $t \to +\infty$. 
In consequence,  applying \cite[Theorem 5]{FT}, we obtain that equation (\ref{pfel}) has a positive increasing heteroclinic solution $y(t) = \phi_0(t)$.  Then Theorem \ref{theorem3.2} of Section \ref{S5} assures the following:   

For each fixed $\tau \in (3\pi/2, 7\pi/2)$ and $\frak{m} =(\mu_1,\mu_2)$ with $-1 <\mu_1 < 0 < \mu_2< \Re z_2(\tau) <1$,  there exists a small $\epsilon_0 >0$  and an open subset $\Omega$ of $\R^3$ such that, for each fixed $\epsilon \in [0,\epsilon_0],$  equation (\ref{pfe})  has a continuous  three-dimensional family of  heteroclinic solutions $\mathcal{F}(\mu_2):= \{y(t, \zeta, \epsilon), \ \zeta \in \Omega\},$ satisfying $y(t, \zeta_1, \epsilon) \not= y(t, \zeta_2, \epsilon)$ for $\zeta_1 \not= \zeta_2$,  $y(t, \mathbf{0}, 0) = \phi_0(t),$ $\sup_{s \leq
0}e^{-\mu_2 s} |y(s)| < \infty$ 
(for a moment, we do not claim that  $y(t, \zeta, \epsilon) <1$).  Moreover, 
$\mathcal{F}(\mu_2)$ contains all heteroclinic solutions of (\ref{pfe})  satisfying $|y-\phi_0|_\frak{m} < \sigma$ whenever $\sigma>0$ is sufficiently small.

This means that   for each $\tau \in  (3\pi/2, 7\pi/2)$ there is a positive $c_*(\tau)$  and an open subset $\Omega$ of $\R^3$ such that equation (\ref{KPPadv})
has a three-dimensional family $\phi(t, \zeta, c), \ \zeta \in \Omega,$ of different heteroclinic connections  for each $c > c_*(\tau)$.  Let us prove that all these connections are positive.  Indeed,  since each solution 
$\phi(t) = \phi(t, \zeta, c),$  $t \in \R,$ of (\ref{KPPadv}) 
is bounded, it should satisfy  
\begin{equation}\label{iie37} 
\phi(t) = \frac{1}{\mu-\lambda}
\int_t^{+\infty}(e^{\lambda(t-s)}- e^{\mu(t-s)})\phi(s)\phi(s+c\tau)ds,
\end{equation}
where $\lambda, \mu$ are defined in $(\ref{E})$. Next, we know that $\phi(-\infty)=0$, $\phi(+\infty)=1$,  
and therefore there exists the rightmost point $t_0\in \R \cup \{-\infty\}$ such that $\phi(t_0) =0$ and $\phi(t) > 0$ for all $t > t_0$.  But then, assuming that $t_0$ is finite  and taking $t=t_0$ in (\ref{iie37}), we get a contradiction: $0= \phi(t_0) >0$. 

Next, we claim that  the set $\{\phi(t, \zeta, c), \ \zeta \in \Omega\}$ contains a unique (up to a translation) monotone wavefront for each fixed $c > c_*(\tau)$.  In order to prove this assertion,  we take 
$0<\mu_2<\mu_2'<z_1(\tau)$ such that the strip $\Sigma(\mu_2'):=\{
z \in \mathbb{C}: \Re z \geq \mu_2'\}$ contains  exactly one zero, $z_1(\tau)$,
of $\chi_1(z)$ while the strip  $\Sigma(\mu_2)$ contains exactly three zeros, $z_1(\tau)$ and $z_2(\tau)= \bar z_3(\tau)$, of $\chi_1(z)$. 
 It is easy to see that, in such a case, $\Sigma(\mu_2')$ contains also 
exactly one root $z_1(\tau,\epsilon),$ $z_1(\tau,0):=  z_1(\tau), $ of the characteristic equation 
$\vare z^2+ z - e^{-\tau z} =0$, for all sufficiently small $\epsilon \geq 0$. Respectively, $\Sigma(\mu_2)$ contains
exactly three roots $z_j(\tau,\epsilon),$ $z_j(\tau,0):=  z_j(\tau), \ j =1,2,3 $ of the characteristic equation 
$\vare z^2+ z - e^{-\tau z} =0$. 
In addition, $z_j(\tau,\epsilon), \ j =1,2,3$ are simple and depend
continuously on $\tau,\epsilon$.  Also, with $\mu_2'$ as above, due to    Theorem \ref{theorem3.2} and Corollaries \ref{Cor3.1}, \ref{Cor3.2}   in Section \ref{S5},   the sub-family $\mathcal F(\mu_2')$ of functions 
$y(t, \zeta, \epsilon),  \ \epsilon \in [0,\epsilon_0],\  \zeta \in \Omega,$  
 such that $\sup\left\{|y-\phi_0|_\frak{m}: y \in \mathcal F(\mu_2')\right\} < \infty$ (hence, $\sup_{s \leq
0}e^{-\mu_2' s} |y(s,  \zeta, \epsilon)|$, $\sup_{s \leq
0}e^{-\mu_2' s} |y'(s,  \zeta, \epsilon)|$  are uniformly bounded)
 is 1-dimensional. This implies that each $y(\cdot, \zeta, \epsilon)\in \mathcal{F}(\mu_2')$ satisfies 
\begin{equation}\label{aft}
(y,y')(t, \zeta, \epsilon) = (1,z_1(\tau,\epsilon))C(\zeta, \epsilon)e^{z_1(\tau,\epsilon) t} + O(e^{(z_1(\tau,\epsilon)+\delta) t}), \ t \to -\infty,  
\end{equation}
for some $\delta >0$ and $C(\zeta, \epsilon)$, see e.g. \cite[Propositions 6.1, 6.2]{FA}.  Let us prove that  $C(\zeta, \epsilon)\not=0$. Indeed, if $ C(\zeta, \epsilon)=0$ then $y(\cdot, \zeta, \epsilon)\in \mathcal{F}(\mu_2')$ is a small solution in the sense that $y(t, \zeta, \epsilon) = O(e^{Lt}),\ t \to -\infty$ for each $L >0$, cf.  \cite[Proposition 6.2]{FA}. On the other hand, it is easy to see that equation  (\ref{pfe}) does not have any
nontrivial small solution. Indeed, if such a solution  $y_*(t)\not\equiv 0$ exists, the function $z_*(t) = e^{-Lt}y_*(t)$ is exponentially decreasing when $t \to -\infty$, for each fixed $L>0$. Next,  $z_*(t)$ satisfies  the asymptotically autonomous {\it linear} equation 
\begin{equation} \label{upaem}
\epsilon z''(t) +(1+2\epsilon L)z'(t) +(\epsilon L^2+L)z - z(t-\tau)e^{-L \tau}(1-y_*(t))=0, 
\end{equation}
whose limit equation at $-\infty$,
\begin{equation} \label{aem}
\epsilon z''(t) +(1+2\epsilon L)z'(t) +(\epsilon L^2+L)z - z(t-\tau)e^{-L \tau}=0, 
\end{equation}
has the characteristic equation $\epsilon (L+z)^2+ (L+z) - e^{-(L+z)\tau}=0$. Thus, for 
all $L>0$ sufficiently large, equation (\ref{aem}) is exponentially stable.  Due to the roughness property 
of an exponential dichotomy (in particular, of an exponential stability, see \cite[Lemma 4.3]{HL}), the unperturbed equation (\ref{upaem})
is exponentially stable too. This means that $z_*(t)\equiv 0$, contradicting our initial assumption of non-triviality of 
$y_*(t)$. 

Hence,  $C(\zeta, \epsilon)\not=0$ in (\ref{aft}) and therefore   $y(s,  \zeta, \epsilon), y'(s,  \zeta, \epsilon)$ do not change 
their signs at $-\infty$.  Consequently, the associated positive solutions $\phi(t, \zeta, c)$ of (\ref{KPPadv}) are eventually monotone 
at $+\infty$ and each $\phi(t)= \phi(t, \zeta, c) \not=1$ for all sufficiently large $t$.  Then either $\phi(t) >1$ on some maximal interval 
$(T, +\infty),$  $T \in \R,$ or $\phi(t) <1$ on some maximal interval $(S, +\infty), \ S \in \R\cup \{-\infty\}$.

In the first case, there exists some $t_1 \in (T, +\infty)$ such that $\phi'(t_1)=0,$ $\phi''(t_1)\leq 0,$ $\phi(t_1) >1,$ $ \phi(t_1+c\tau)>1$. But then 
 $0 \geq \phi''(t_1)= - \phi(t_1)(1-\phi(t_1+c\tau)) >0$, a contradiction. 

In the second case, suppose that $\phi'(t_2) =0$ at some rightmost point $t_2$. Then $\phi''(t_2)\geq 0,$ $\phi(t_2) <1,$ $\phi(t_2+c\tau) <1$, and we again obtain a contradiction: 
 $0 \leq \phi''(t_1)= - \phi(t_1)(1-\phi(t_1+c\tau)) <0$.  The above arguments show that if $y \in \mathcal{F}(\mu_2')$ then 
 $\phi'(t) = \phi'(t, \zeta, c) >0$ for all $t\in \R$. 
 
 Finally,  take some $y \in \mathcal{F}(\mu_2)\setminus \mathcal{F}(\mu_2')$. Then we have that $\sup_{s \leq
0}e^{-\mu_2' s} |y(s,  \zeta, \epsilon)|=\infty$, $\sup_{s \leq
0}e^{-\mu_2 s} |y(s,  \zeta, \epsilon)|<\infty$ and therefore, for some $D(\zeta, \epsilon)\not=0$, $\delta >0$, it holds that 
$$
y(t, \zeta, \epsilon) = D(\zeta, \epsilon)e^{\Re z_2(\tau,\epsilon) t}\cos(\Im z_2(\tau,\epsilon) t + E(\zeta, \epsilon)) + O(e^{(\Re z_2(\tau,\epsilon)+\delta) t}), \ t \to -\infty.
$$
This implies that all solutions  $y \in \mathcal{F}(\mu_2)\setminus \mathcal{F}(\mu_2')$ are oscillating around zero at $-\infty$ so that every monotone solution in $\mathcal{F}(\mu_2)$ belongs to 1-dimensional subfamily $\mathcal{F}(\mu_2')$.  Since small translations of each heteroclinic $y \in \mathcal{F}(\mu_2')$ leave it within $\mathcal{F}(\mu_2')$, we may conclude that the  1-dimensional subfamily $\mathcal{F}(\mu_2')$ is generated by translations of some fixed heteroclinic solution. For each fixed  sufficiently large $c$, this proves the uniqueness (up to a translation) of a monotone front in the family $\phi(t, \zeta, c)$. \qed
\end{proof}

\section{Proof of the existence of heteroclinic solutions for equation (\ref{pfe})} \label{S5}

In this section, we apply the Hale-Lin functional-analytic approach  \cite{fhw,FT,GTLMS,HL} 
 to equations (\ref{pfe}) and 
(\ref{pfel}). 
The wavefronts  for (\ref{pfe}) without the restriction $y(t)<1$ will be
obtained as perturbations of  the monotone positive heteroclinic solution $\phi_0(t)$ of
(\ref{pfel}).  Hence,  it is convenient to use  the change of variables $y(t)=w(t)+\phi_0(t)$ transforming 
(\ref{pfe})  without the restriction $y(t)<1$ into 
\begin{equation}\label{e3.3}
\epsilon w''(t)+w'(t)-w(t)=-L(t,w_t) - G(\vare,t,w_t),\quad w(-\infty)=w(+\infty)=0.
\end{equation}
Here $\vare \geq 0, \ w_t(\cdot) := w(t+\cdot)\in C[-\tau,0],$  and the functionals $G,L: \R_+\times \R \times C[-\tau,0] \to \R$ are defined by   $$G(\vare,t, v(\cdot)):=\epsilon {\phi_0}''(t) + v(0)v(-\tau), \quad
L(t,v(\cdot)) := (1+\phi_0(t-\tau))v(0)+ 
(\phi_0(t)-1)v(-\tau), 
$$
The  roots of
the characteristic equation for
$\epsilon w''(t)+w'(t)-w(t)=0$ are the extended real numbers 
$$
\al(\epsilon)={{-1-\sqrt{1+4\vare}}\over {2\vare}},\q
\be(\epsilon)={{-1+\sqrt{1+4\vare}}\over {2\vare}}\ \ \mbox{for}\ \vare >0, \ \mbox{and} \ \al(0) : = -\infty, \ \be(0) :=1. 
$$
Functions $\al(\cdot), \ \be(\cdot)$ are continuous on $\R_+$  (including $0$ because  $\al(\vare)\to -\infty,$ $ \be(\vare)\to 1^-$ as $\vare \to 0^+$). 

A bounded function $w:\mathbb{R}\to\mathbb{R}$ is a solution of
(\ref{e3.3}) if and only if
\begin{equation}\label{e3.5}
Jw(t)=H(\vare,w)(t),\q t\in\mathbb{R},
\end{equation}
$$
\hspace{-11mm}\mbox{where}\q (Jw)(t)=w(t)-\int^{+\infty}_te^{t-s} L(s,w_s)ds, \q
H(0,w)(t):=  \int^{+\infty}_te^{t-s} w(s)w(s-\tau)ds,$$ 
$$\displaylines{ \hspace{-32mm} \mbox{and, for}  \ \vare >0, \q \q \q
H(\vare,w)(t)=\int^{+\infty}_t \left[
\frac{e^{\be(\vare)(t-s)}}{\sqrt{1+4\vare}} - e^{(t-s)}\right]L(s,w_s) \, ds\ +\cr
{1\over {\sqrt{1+4\vare}}}\left [\int_{-\infty}^t{{e^{\al(\vare)(t-s)}}}(L(s,w_s) + G(\vare,{s},w_s))\, ds+ \int^{+\infty}_t
e^{\be(\vare)(t-s)} G(\vare,s,w_s)
ds\right].}$$
Our purpose is to apply a contraction principle argument in order to
obtain a solution of Eq. (\ref{e3.5}), for $\vare> 0$ small and $w$
close to 0, in the space  $C_{\frak{m}}$, for suitably chosen $\frak{m} =(\mu_1,\mu_2),$ $\mu_1 <0<\mu_2<1$. We first analyse the
linear part $J_{\frak{m}}:= J|_{C_{{\frak{m}}}}:{C_{{\frak{m}}}}\to {C_{{\frak{m}}}},$
by introducing the auxiliary operators $D_{\frak{m}},T_{\frak{m}}:C_{\frak{m}} ^1\to C_{\frak{m}}$, 
defined by 
$
(T_{\frak{m}}y)(t)=y'(t)-y(t)+L(t,y_t),$ $(D_{\frak{m}}y)(t)= y'(t) -y(t). 
$
\begin{lem}\label{L3.1}  The linear operators  $D_{\frak{m}}, T_{\frak{m}}$ and $J_{\frak{m}}$ are bounded. Moreover, $D_{\frak{m}}$ is a bijection and $T_{\frak{m}}= D_{\frak{m}}\circ J_{\frak{m}}|_{C_{\frak{m}} ^1}$.  \end{lem}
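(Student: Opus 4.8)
The plan is to rewrite $T_{\frak m}$ and $J_{\frak m}$ through a handful of elementary bounded operators on $C_{\frak m}$ and then read off every assertion; throughout one uses only that $\phi_0$ is bounded with $0\le\phi_0(t)\le1$ and that $-1<\mu_1<0<\mu_2<1$. First I would record three auxiliary facts. (i) For bounded continuous $g\colon\R\to\R$ the multiplication $M_g\colon y\mapsto gy$ is bounded on $C_{\frak m}$ with $|M_g y|_{\frak m}\le|g|_\infty|y|_{\frak m}$. (ii) The shift $(S_\tau y)(t):=y(t-\tau)$ is bounded on $C_{\frak m}$: for $t\le0$ one has $e^{-\mu_2 t}|y(t-\tau)|\le e^{-\mu_2\tau}|y|_{\frak m}$, and for $t\ge0$ one splits $0\le t<\tau$ and $t\ge\tau$, using $e^{-\mu_1 t}\le e^{-\mu_1\tau}$ on the first piece, which gives a bound with a constant depending only on $\mu_1,\mu_2,\tau$. (iii) The resolvent operator $(\mathcal R h)(t):=\int_t^{+\infty}e^{t-s}h(s)\,ds$ is bounded on $C_{\frak m}$; the only computation is that for $t\ge0$ one gets $e^{-\mu_1 t}\int_t^{+\infty}e^{t-s}e^{\mu_1 s}\,ds=(1-\mu_1)^{-1}$, while for $t\le0$ the split $\int_t^0+\int_0^{+\infty}$ together with $\mu_2<1$ and the elementary inequality $e^{t}\le e^{\mu_2 t}$ (valid for $t\le0$) yields $e^{-\mu_2 t}(\mathcal R h)(t)\le\bigl((1-\mu_2)^{-1}+(1-\mu_1)^{-1}\bigr)|h|_{\frak m}$; so the standing inequalities on $\mu_1,\mu_2$ are precisely what is needed for convergence with the right weights. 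Differentiating $(\mathcal R h)(t)=e^{t}\int_t^{+\infty}e^{-s}h(s)\,ds$ also gives the pointwise identity $(\mathcal R h)'=\mathcal R h-h$.

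Because $\phi_0$ is bounded, the operator $\mathcal L$ with $(\mathcal L y)(t):=L(t,y_t)=(1+\phi_0(t-\tau))y(t)+(\phi_0(t)-1)y(t-\tau)$ equals $M_{1+\phi_0(\cdot-\tau)}+M_{\phi_0(\cdot)-1}\circ S_\tau$, hence is bounded on $C_{\frak m}$ by (i)--(ii), and a fortiori bounded as a map $C^1_{\frak m}\to C_{\frak m}$. Consequently $D_{\frak m}y=y'-y$ satisfies $|D_{\frak m}y|_{\frak m}\le|y|_{1,\frak m}$, so $D_{\frak m}$ is bounded; $T_{\frak m}=D_{\frak m}+\mathcal L|_{C^1_{\frak m}}$ is then bounded; and $J_{\frak m}=\mathrm{Id}-\mathcal R\circ\mathcal L$ is bounded on $C_{\frak m}$ by (iii). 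This settles the boundedness claims.

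For the bijectivity of $D_{\frak m}\colon C^1_{\frak m}\to C_{\frak m}$: if $D_{\frak m}y=0$ then $y(t)=y(0)e^{t}$, and $e^{t}\notin C_{\frak m}$ unless $y(0)=0$ since the weight $e^{-\mu_1 t}$ (with $\mu_1<1$) cannot absorb $e^{t}$ at $+\infty$; hence $D_{\frak m}$ is injective. Given $g\in C_{\frak m}$, put $y:=-\mathcal R g$; then $y\in C_{\frak m}$ by (iii), $y'=y+g$ by the identity $(\mathcal R h)'=\mathcal R h-h$, whence $y'\in C_{\frak m}$ and so $y\in C^1_{\frak m}$, and $D_{\frak m}y=y'-y=g$; thus $D_{\frak m}$ is onto (and has bounded inverse with $|y|_{1,\frak m}\lesssim|g|_{\frak m}$). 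Finally, for $y\in C^1_{\frak m}$ one has $J_{\frak m}y=y-\mathcal R\mathcal L y$, whose derivative $y'-\mathcal R\mathcal L y+\mathcal L y$ lies in $C_{\frak m}$, so $J_{\frak m}y\in C^1_{\frak m}$ and $D_{\frak m}(J_{\frak m}y)=(J_{\frak m}y)'-J_{\frak m}y=(y'-\mathcal R\mathcal L y+\mathcal L y)-(y-\mathcal R\mathcal L y)=y'-y+\mathcal L y=T_{\frak m}y$, which is the claimed factorization $T_{\frak m}=D_{\frak m}\circ J_{\frak m}|_{C^1_{\frak m}}$.

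The whole argument is essentially bookkeeping; the only step that needs genuine care is fact (iii) — verifying that $\mathcal R$ preserves the weighted space $C_{\frak m}$ and keeping track of how the weights $e^{\mu_j s}$ interact with the kernel $e^{t-s}$ and with the $\tau$-shift appearing in $\mathcal L$ — and this is exactly where the hypothesis $-1<\mu_1<0<\mu_2<1$ is used. Everything else (the elementary differentiation identity for $\mathcal R$, the triviality of $\ker D_{\frak m}$, and the composition computation) is immediate.
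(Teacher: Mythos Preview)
Your proof is correct and follows essentially the same approach as the paper's: both show boundedness of $L$, $D_{\frak m}$, $T_{\frak m}$, $J_{\frak m}$ by direct weighted estimates, exhibit the explicit inverse $D_{\frak m}^{-1}y(t)=-\int_t^{+\infty}e^{t-s}y(s)\,ds$, and verify the factorization by differentiating $J_{\frak m}y$ under the integral sign. The only difference is organizational: you package the argument through named auxiliary operators $M_g$, $S_\tau$, $\mathcal R$, while the paper writes out the explicit constants (e.g.\ $|L(\cdot,y_\cdot)|_{\frak m}\le(2+e^{-\mu_1\tau})|y|_{\frak m}$) in one pass.
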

\begin{proof} By a direct computation we find that $|L(\cdot,y_{\cdot})|_{\frak{m}} \leq (2+e^{-\mu_1\tau})|y|_{\frak{m}}$, 
$$
|D_{\frak{m}}y|_{\frak{m}} \leq |y|_{1,\frak{m}}, \quad 
|T_{\frak{m}}y|_{\frak{m}} \leq (3+e^{-\mu_1\tau}) |y|_{1,\frak{m}}, \quad |J_{\frak{m}}y|_{\frak{m}} \leq \left(3+\frac{2+e^{-\mu_1\tau}}{1-\mu_2}+ e^{-\mu_1\tau}\right)|y|_{\frak{m}}. 
$$
If $y \in C_{\frak{m}} ^1$ then $(J_{\frak{m}}y)'(t) = y'(t) + L(t,y_t) + (J_{\frak{m}}y)(t) -y(t)$, so that $J_{\frak{m}}y \in C_{\frak{m}} ^1$
and $T_{\frak{m}}y= D_{\frak{m}}\circ J_{\frak{m}}y$. 
Furthermore, it can be easily seen that there exists the inverse of $D_{\frak{m}}$: 
$$
D_{\frak{m}}^{-1}y(t)= -\int_t^{+\infty}e^{t-s}y(s)ds,  \quad |D_{\frak{m}}^{-1}y(t)|_{1,\frak{m}} \leq \left(3+ \frac{2}{1-\mu_2}\right)|y|_{\frak{m}}. \hspace{2cm} \square
$$
\end{proof}
Next, consider the linear differential  equation  
\begin{equation}\label{e3.6}
y'(t)=y(t)-L(t,y_t).
\end{equation}
This equation  is
asymptotically autonomous, with the limiting equations 
$
y'(t)=  y(t-\tau)$ and  
$ y'(t) = - y(t),  
$
respectively, at $-\infty$ and $+\infty$.   
\begin{lem}\label{L3.2}  Assume that $\tau \in  (3\pi/2, 7\pi/2)$. Let  $\frak{m} = (\mu_1,\mu_2)$  satisfy
$$
-1 <\mu_1 < 0 < \mu_2< z_1(\tau) <1, \quad \mu_2  \not=\Re\, z_2(\tau). 
$$  
Then\
$
Im\, (T_{\frak{m}})=C_{\frak{m}},\ \dim\, Ker\, (T_{\frak{m}})=r_{\frak{m}},
$
where $r_{\frak{m}}=\# \{ z\in \mathbb{C} :\chi_1(z) =0, \Re\,
z
>\mu_2\}.$ 
\end{lem}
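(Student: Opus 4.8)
The plan is to run the Hale--Lin functional-analytic scheme for the asymptotically autonomous equation (\ref{e3.6}) in exponentially weighted spaces, reducing the statement about $T_{\frak m}$ to a Fredholm property of the integral operator $J_{\frak m}$. First, by Lemma \ref{L3.1} one has $T_{\frak m}=D_{\frak m}\circ J_{\frak m}|_{C^1_{\frak m}}$ with $D_{\frak m}\colon C^1_{\frak m}\to C_{\frak m}$ an isomorphism; moreover, from the explicit form $(J_{\frak m}y)(t)=y(t)-\int_t^{+\infty}e^{t-s}L(s,y_s)\,ds$ one sees that $\ker J_{\frak m}\subset C^1_{\frak m}$ and that $J_{\frak m}y=g$ with $g\in C^1_{\frak m}$ forces $y\in C^1_{\frak m}$. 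Hence it suffices to prove that $J_{\frak m}\colon C_{\frak m}\to C_{\frak m}$ is surjective with $\dim\ker J_{\frak m}=r_{\frak m}$. Writing $J_{\frak m}=I-\mathcal{K}$, the operator $\mathcal{K}$ is bounded but not compact, so I would split $L(t,\cdot)=L_{\pm}(\cdot)+\rho_{\pm}(t,\cdot)$ near $\pm\infty$, where $L_-(v)=v(0)-v(-\tau)$ and $L_+(v)=2v(0)$ are the frozen limits (coming from $\phi_0(-\infty)=0$, $\phi_0(+\infty)=1$) and $\rho_\pm(t,\cdot)\to 0$, glue the corresponding constant-coefficient operators with a partition of unity into a model operator $J^0_{\frak m}$, and check that $J_{\frak m}-J^0_{\frak m}$ is compact on $C_{\frak m}$ by an Arzel\`a--Ascoli argument together with the uniform decay of the remainder kernels at infinity (this is the standard singular-perturbation adaptation of \cite{HL} used in \cite{fhw,FTnl,GTLMS}).

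The second step is to identify $J^0_{\frak m}$ as Fredholm and compute its index. The model operator splits into two half-line problems governed by the autonomous equations $y'(t)=y(t-\tau)$ (at $-\infty$, characteristic function $\chi_1(z)=z-e^{-z\tau}$) and $y'(t)=-y(t)$ (at $+\infty$, characteristic function $z+1$); the hypotheses $\mu_2\neq\Re z_2(\tau)$ together with $\mu_1\in(-1,0)$ and $\mu_2\in(0,z_1(\tau))$ guarantee that neither $\chi_1$ nor $z+1$ has a root on the corresponding weight lines $\{\Re z=\mu_2\}$, $\{\Re z=\mu_1\}$, so both half-line operators are hyperbolic in the weighted norms and $J^0_{\frak m}$ is Fredholm; by invariance of the index under the compact perturbation so is $J_{\frak m}$, with
\[
\mathrm{ind}\,J_{\frak m}=\#\{z:\chi_1(z)=0,\ \Re z>\mu_2\}-\#\{z:z+1=0,\ \Re z>\mu_1\}.
\]
Here I would invoke Lemma \ref{hub}: for $\tau\in(3\pi/2,7\pi/2)$ the only roots of $\chi_1$ with $\Re z\ge 0$ are $z_1(\tau)\in(0,1)$ and $z_2(\tau)=\bar z_3(\tau)$ with $0<\Re z_2(\tau)<z_1(\tau)$, all other roots having $\Re z<0<\mu_2$; hence the first count equals $1$ when $\mu_2\in(\Re z_2(\tau),z_1(\tau))$ and $3$ when $\mu_2\in(0,\Re z_2(\tau))$, i.e.\ it equals $r_{\frak m}$ in both cases, while $-1<\mu_1$ makes the second count zero. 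Therefore $\mathrm{ind}\,J_{\frak m}=r_{\frak m}$.

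The remaining and, I expect, the delicate step is to show that the cokernel of $J_{\frak m}$ is trivial; combined with the index computation this yields at once $\dim\ker J_{\frak m}=r_{\frak m}$ and $\mathrm{Im}\,J_{\frak m}=C_{\frak m}$, hence $\mathrm{Im}\,T_{\frak m}=C_{\frak m}$ and $\dim\ker T_{\frak m}=r_{\frak m}$. One way is to identify $\mathrm{coker}\,J_{\frak m}$ with the bounded solutions, in the dual weighted space, of the formal adjoint of (\ref{e3.6}) and to show this space is trivial because the $-\infty$ limit equation $y'=y(t-\tau)$ has nonempty unstable spectrum relative to $\mu_2>0$ (so its adjoint has no nontrivial solution with the decay required at $-\infty$). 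A more hands-on alternative, closer to \cite{HL,FA}, is to build a right inverse directly: on $(-\infty,-R]$ and on $[R,+\infty)$ use the exponential dichotomies of the two limit equations and their roughness (\cite[Lemma 4.3]{HL}) to produce solutions of the inhomogeneous equation with the prescribed exponential rates, then glue them by solving a finite-dimensional boundary value problem on $[-R,R]$; the obstruction to gluing sits in a space of dimension $\#\{z:z+1=0,\ \Re z>\mu_1\}=0$ and therefore vanishes. Either way, the place where the one-sided choice of weights ($\mu_2>0$ on the left, $\mu_1<0$ on the right) is genuinely exploited is precisely this cokernel computation, and that is the main obstacle I anticipate.
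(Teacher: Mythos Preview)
Your approach is correct in outline and reaches the same conclusion, but it takes a considerably longer route than the paper and misidentifies where the real work lies.

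The paper does not pass through $J_{\frak m}$ at all in proving this lemma. It works directly with the differential operator $T_{\frak m}$ on $C^1_{\frak m}$ and simply invokes \cite[Lemmas 4.3 and 4.6]{HL}. The limiting equations $y'(t)=y(t-\tau)$ at $-\infty$ and $y'(t)=-y(t)$ at $+\infty$ have shifted exponential dichotomies at $\mu_2$ and $\mu_1$ respectively (exactly your hyperbolicity check), so by roughness (\ref{e3.6}) inherits dichotomies on both half-lines. Then \cite[Lemma 4.6]{HL} delivers in one stroke not only the Fredholm index $\mathrm{ind}\,T_{\frak m}=\dim E^-_{\mu_2}-\dim E^+_{\mu_1}=r_{\frak m}-0$ but also, crucially, the kernel dimension itself: $\dim\ker T_{\frak m}=\dim E^-_{\mu_2}=r_{\frak m}$. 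Surjectivity is then immediate from $\mathrm{codim\,Im}=\dim\ker-\mathrm{ind}=0$. The passage from $T_{\frak m}$ to $J_{\frak m}$ is deferred to the next lemma.

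By contrast, you first reduce to $J_{\frak m}$, then build a model operator and a compact-perturbation argument to obtain the index, and then flag the cokernel computation as ``the delicate step'' requiring either an adjoint analysis or a gluing construction. Both of your options (a) and (b) are viable, but option (b) is essentially a re-derivation of what \cite[Lemma 4.6]{HL} already packages: the gluing obstruction you identify (of dimension $\dim E^+_{\mu_1}=0$) is precisely the codimension formula in that lemma. Your option (a) is sketchier---the sentence ``its adjoint has no nontrivial solution with the decay required at $-\infty$'' needs the full two-sided decay condition, not just the $-\infty$ side, and the reasoning as stated does not quite close. In short, nothing is wrong, but you are working harder than necessary: the Hale--Lin lemma already computes $\dim\ker$ directly, so the cokernel is not a separate obstacle.
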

\begin{proof}
Following Hale and Lin \cite{HL}, we say that the first 
order linear autonomous delayed equation $y'(t)=M(y_t)$ has a  `shifted exponential dichotomy' on $\mathbb{R}$ with the splitting made  at $\nu \in \R$,  if the vertical line $\{\Re z = \nu\}$ does not contain any eigenvalue of  $y'(t)=M(y_t)$. Hence, clearly,  the equations $y'(t)=-y(t)$ and  $y'(t)=  y(t-\tau)$
admit  shifted exponential dichotomies on $\mathbb{R}$ with the
splitting made   at $\mu_1$ and $\mu_2$, respectively.  As a consequence, by 
\cite[Lemma 4.3]{HL}, there is $T>0$ such that  (\ref{e3.6}) has a
shifted exponential dichotomy on $(-\infty, -T]$ and $[T,\infty)$.
Therefore we can apply Lemma 4.6 of \cite{HL} to (\ref{e3.6}).  It follows that
$T_{\frak{m}}$ is a Fredholm operator, with index $Ind(T_{\frak{m}})$ given by
$$
Ind\,(T_{\frak{m}})=\dim Im\,(P_u^-(-t))-\dim Im\, (P_u^+(t)),\q t\ge T,
$$
where $P_u^-(-t), P_s^-(-t)$ and $P_u^+(t), P_s^+(t)\, (t\ge T)$ are
the projections associated with the shifted exponential
dichotomies for $y'(t)=y(t-\tau)$ and $y'(t)=-y(t)$,
respectively. From \cite[Lemma 4.3]{HL}, we also have that
$P_u^-(-t)\to P_u^-, P_u^+(t)\to P^+_u$ as $t\to\infty$, where
$P_u^-$ is the canonical projection from ${ C[-\tau,0]}$ onto the
$\mu_2$-unstable space $E_{\mu_2}^-$ for   $y'(t)=y(t-\tau)$, and $P_u^+$
is the canonical projection from ${ C[-\tau,0]}$ onto the unstable space
$E_{\mu_1}^+$ for for $y'(t)=-y(t)$. We have $E_{\mu_1}^+=\{ 0\}$ and $\dim
E_{\mu_2}^-=r_{\frak{m}}$, consequently  $Ind\, (T_{\frak{m}})=r_{\frak{m}}$.  On the other hand, the index
of $T_{\frak{m}}$ is defined by $Ind\, (T_{\frak{m}})=\dim Ker\,(T_{\frak{m}})-{\rm codim}\,
Im\,(T_{\frak{m}})$. Again by \cite[Lemma 4.6]{HL} we find that  $\dim
Ker\,(T_{\frak{m}})=\dim E_{\mu_2}^-=r_{\frak{m}}$, and therefore $Im\, (T_{\frak{m}})=C_{\frak{m}}$.
\hfill $\square$
\end{proof}
Observe that  $r_{\frak{m}}=1$ for $\mu_2$ close to $z_1(\tau)$ and  $r_{\frak{m}}=3$ for $\mu_2 < \Re z_2(\tau)$.  Moreover, 
since $T_{\frak{m}}= D_{\frak{m}}\circ J_{\frak{m}}|_{C_{\frak{m}} ^1}$ is a
surjection, we have
\begin{lem}\label{L3.3} Let $\frak{m} = (\mu_1, \mu_2)$ be as in  Lemma \ref{L3.2}.
Then  the operator  $J_{\frak{m}}:{C_{{\frak{m}}}}\to {C_{{\frak{m}}}}$ is surjective and   $Ker\, (J_{\frak{m}})=Ker\, (T_{\frak{m}})$.
\end{lem}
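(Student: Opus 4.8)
The plan is to deduce the whole statement from the factorization $T_{\frak{m}}=D_{\frak{m}}\circ J_{\frak{m}}|_{C^1_{\frak{m}}}$ and the bijectivity of $D_{\frak{m}}:C^1_{\frak{m}}\to C_{\frak{m}}$ (with $(D_{\frak{m}}^{-1}y)(t)=-\int_t^{+\infty}e^{t-s}y(s)\,ds$), both recorded in Lemma \ref{L3.1}, together with the surjectivity of $T_{\frak{m}}$ established in Lemma \ref{L3.2}. The observation that makes the argument run is that $J_{\frak{m}}$ is a \emph{regularizing} perturbation of the identity: since $w\mapsto L(\cdot,w_\cdot)$ is a bounded operator from $C_{\frak{m}}$ into $C_{\frak{m}}$ (Lemma \ref{L3.1}), the defining formula for $J$ rewrites at once as $J_{\frak{m}}w=w+D_{\frak{m}}^{-1}(L(\cdot,w_\cdot))$, and because $D_{\frak{m}}^{-1}$ sends $C_{\frak{m}}$ into $C^1_{\frak{m}}$, the function $J_{\frak{m}}w-w$ lies in $C^1_{\frak{m}}$ for every $w\in C_{\frak{m}}$. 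I would record this bootstrapping remark first, since everything else rests on it.

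For surjectivity, given $g\in C_{\frak{m}}$ I would set $h:=-D_{\frak{m}}^{-1}(L(\cdot,g_\cdot))$, which lies in $C^1_{\frak{m}}$; then $D_{\frak{m}}h\in C_{\frak{m}}$, so by Lemma \ref{L3.2} there is $v\in C^1_{\frak{m}}$ with $T_{\frak{m}}v=D_{\frak{m}}h$, i.e. $D_{\frak{m}}(J_{\frak{m}}v)=D_{\frak{m}}h$; since $J_{\frak{m}}v$ and $h$ both lie in $C^1_{\frak{m}}$ and $D_{\frak{m}}$ is injective, $J_{\frak{m}}v=h$. Putting $w:=g+v\in C_{\frak{m}}$ and using linearity together with the regularizing identity, $J_{\frak{m}}w=(g+D_{\frak{m}}^{-1}(L(\cdot,g_\cdot)))+h=g$, so $J_{\frak{m}}$ is onto $C_{\frak{m}}$.

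For the kernels I would argue by two inclusions. From $T_{\frak{m}}=D_{\frak{m}}\circ J_{\frak{m}}|_{C^1_{\frak{m}}}$ and injectivity of $D_{\frak{m}}$ one immediately gets $Ker\,(T_{\frak{m}})\subseteq Ker\,(J_{\frak{m}})$. Conversely, if $w\in C_{\frak{m}}$ and $J_{\frak{m}}w=0$, the regularizing identity forces $w=-D_{\frak{m}}^{-1}(L(\cdot,w_\cdot))\in C^1_{\frak{m}}$, so $w$ is in the domain of $T_{\frak{m}}$ and $T_{\frak{m}}w=D_{\frak{m}}(J_{\frak{m}}w)=0$, giving $Ker\,(J_{\frak{m}})\subseteq Ker\,(T_{\frak{m}})$. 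Hence $Ker\,(J_{\frak{m}})=Ker\,(T_{\frak{m}})$, which is $r_{\frak{m}}$-dimensional by Lemma \ref{L3.2}.

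I do not expect any genuine analytic obstacle here; the one thing to watch is the constant interplay between $C_{\frak{m}}$ and $C^1_{\frak{m}}$ — in particular that $T_{\frak{m}}$ and $D_{\frak{m}}$ are only defined on $C^1_{\frak{m}}$ — so each time before applying one of these operators one must have already checked, via the regularizing identity, that its argument actually belongs to $C^1_{\frak{m}}$. That bookkeeping is really the whole content of the proof, and the decisive point throughout is the rewriting $J_{\frak{m}}w=w+D_{\frak{m}}^{-1}(L(\cdot,w_\cdot))$.
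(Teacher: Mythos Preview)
Your proof is correct and is essentially the same as the paper's: the paper writes $\xi:=y-J_{\frak m}y\in C^1_{\frak m}$ (which is exactly your regularizing identity $J_{\frak m}w=w+D_{\frak m}^{-1}(L(\cdot,w_\cdot))$ in disguise), reduces $J_{\frak m}w=y$ to $J_{\frak m}(w-y)=\xi$ and then to $T_{\frak m}(w-y)=D_{\frak m}\xi$, and invokes the surjectivity of $T_{\frak m}$; the kernel argument is likewise identical. Your write-up just makes the $C_{\frak m}$ versus $C^1_{\frak m}$ bookkeeping more explicit than the paper does.
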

\begin{proof} 
 Clearly, for $w\in C_{{\frak{m}}}$ we have $J_{{\frak{m}}}w=0$ if and only if $w$ satisfies (\ref{e3.6})
 and   therefore
 $w'\in C_{{\frak{m}}}$ and  $Ker\, (J_{\frak{m}})=Ker\, (T_{\frak{m}})$. 

Next, if $y\in C_{{\frak{m}}}$ then  $\xi:= y- J_{{\frak{m}}}y \in  C_{{\frak{m}}}^1$. Equation $J_{{\frak{m}}}w =
y$ is equivalent to $ J_{{\frak{m}}}(w-y) =  \xi $ (hence, it is equivalent to $ T_{{\frak{m}}}(w-y) = D_{{\frak{m}}}\circ J_{{\frak{m}}}(w-y) =  D_{{\frak{m}}}\xi $) and therefore it possesses a
solution $\chi \in C^1_{{\frak{m}}}$.  
Thus  $J_{{\frak{m}}}(\chi+y) =y$ so that   $J_{\frak{m}}(C_{\frak{m}})= C_{\frak{m}}$.\qed 
\end{proof}
For the next stage of our analysis,  we need the detailed description of the main properties of the nonlinear operator $H$ in  (\ref{e3.5}).  
\begin{lem}\label{L3.5} Let $\frak{m} = (\mu_1, \mu_2)$ be as in Lemma \ref{L3.2} and $B_\sigma^\frak{m}(0)$ denote the $\sigma$-neighborhood of 0 in $C_{\frak{m}}$.
Then there exist  $\vare^*>0$ and non-negative continuous functions $C(\vare, \sigma), D(\vare),$ $\sigma \geq 0,$ $ \vare \in [0, \vare^*),$    such that  $C(0,0) = D(0) =0$, and for any $\vare \in [0, \vare^*)$ and  $w\in B_\sigma^\frak{m}(0)$,    it holds 
\begin{equation}\label{e3.7}  
|H(\vare,w)|_{\frak{m}} \le  C(\vare, \sigma)|w|_{\frak{m}} + D(\vare), \
 |H(\vare,w)-H(\vare,v)|_{\frak{m}} \le C(\vare, \sigma) |w-v|_{\frak{m}}.
 \end{equation}
 Furthermore, 
 $H: [0, \epsilon^*)\times  B_\sigma^\frak{m}(0) \to  C_{\frak{m}}$ is a continuous function. 
\end{lem}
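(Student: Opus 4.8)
The plan is to write $H(\vare,\cdot)=\mathcal{L}_\vare+\mathcal{S}_\vare+\mathcal{Q}_\vare$, where $\mathcal{L}_\vare$ is the (bounded, linear) operator collecting all terms of $H$ in which $w$ enters only through $L(\cdot,w_\cdot)$, $\mathcal{S}_\vare$ is the $w$-independent ``source'' produced by the summand $\vare\phi_0''$ of $G$, and $\mathcal{Q}_\vare$ comes from the quadratic term $N(w)(s):=w(s)w(s-\tau)$; at $\vare=0$ one has $\mathcal{L}_0=\mathcal{S}_0=0$ and $\mathcal{Q}_0w=H(0,w)$. Each of these is obtained by applying, to a function in $C_\frak{m}$, one of the scalar convolution operators with kernels $e^{t-s}$ and $e^{\be(\vare)(t-s)}$ (on $s\ge t$) or $e^{\al(\vare)(t-s)}$ (on $s\le t$). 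The workhorse is the weighted convolution estimate: for a constant $\ga>\mu_2$, $\bigl|\int_t^{+\infty}e^{\ga(t-s)}g(s)\,ds\bigr|_\frak{m}\le\kappa(\ga)|g|_\frak{m}$, and for $\ga<\mu_1$, $\bigl|\int_{-\infty}^te^{\ga(t-s)}g(s)\,ds\bigr|_\frak{m}\le\kappa'(\ga)|g|_\frak{m}$, with explicit constants and $\kappa'(\ga)\to0$ as $\ga\to-\infty$; both follow by splitting the range of $t$ at $0$ and using $|g(s)|\le e^{\mu_1 s}|g|_\frak{m}$ for $s\ge0$, $|g(s)|\le e^{\mu_2 s}|g|_\frak{m}$ for $s\le0$. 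I first fix $\vare^*>0$ so small that $\al(\vare)<\mu_1$ and $\be(\vare)>\mu_2$ for every $\vare\in(0,\vare^*)$, which is possible since $\al(0^+)=-\infty$, $\be(0^+)=1$ and $-1<\mu_1<0<\mu_2<1$; then all three convolution operators are bounded on $C_\frak{m}$, uniformly for $\vare\in[0,\vare^*)$, and, as in the proof of Lemma \ref{L3.1}, $\bigl|L(\cdot,w_\cdot)\bigr|_\frak{m}\le(2+e^{-\mu_1\tau})|w|_\frak{m}$.

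Next I check that $\phi_0''\in C_\frak{m}$. Since $\phi_0$ is the positive increasing heteroclinic of (\ref{pfel}), $\phi_0(t)=O(e^{z_1(\tau)t})$ as $t\to-\infty$ (the rate being the leading positive root $z_1(\tau)\in(0,1)$ of $\chi_1$, cf. Lemma \ref{hub}) and $1-\phi_0(t)=O(e^{-t})$ as $t\to+\infty$, with the same bounds for $\phi_0'$ and $\phi_0''$; because $\mu_2<z_1(\tau)$ and $\mu_1>-1$ this gives $|\phi_0''|_\frak{m}<\infty$. Applying the weighted estimate, $|\mathcal{S}_\vare|_\frak{m}\le D(\vare)$ with $D(\vare)=O(\vare)$, so $D(0)=0$ and $D$ is non-negative and continuous on $[0,\vare^*)$. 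For $\mathcal{Q}_\vare$ I use the bilinear bound $|N(w)-N(v)|_\frak{m}\le c_\tau\,(|w|_\frak{m}+|v|_\frak{m})\,|w-v|_\frak{m}$ (in particular $|N(w)|_\frak{m}\le c_\tau|w|_\frak{m}^2$), which follows from $N(w)-N(v)=w(s)(w(s-\tau)-v(s-\tau))+(w(s)-v(s))v(s-\tau)$ together with the fact that a pointwise product of two elements of $C_\frak{m}$ lies in $C_\frak{m}$ with norm controlled by the product of the norms — the gain being that $e^{\mu_2 s}e^{\mu_2(s-\tau)}=e^{-\mu_2\tau}e^{2\mu_2 s}$ decays at $-\infty$ strictly faster than the weight $e^{-\mu_2 s}$ requires, and symmetrically at $+\infty$ with $\mu_1$, the shift by $-\tau$ contributing only bounded factors. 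Composing with the convolution operators yields $|\mathcal{Q}_\vare w-\mathcal{Q}_\vare v|_\frak{m}\le c_2(\vare)\,(|w|_\frak{m}+|v|_\frak{m})\,|w-v|_\frak{m}$ for $w,v\in B_\sigma^\frak{m}(0)$, with $c_2$ finite, non-negative and continuous on $[0,\vare^*)$.

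For $\mathcal{L}_\vare$, whose kernels are $\frac{e^{\be(\vare)(t-s)}}{\sqrt{1+4\vare}}-e^{t-s}$ and $\frac{e^{\al(\vare)(t-s)}}{\sqrt{1+4\vare}}$, composing with $w\mapsto L(\cdot,w_\cdot)$ and using the weighted estimate one checks that $\|\mathcal{L}_\vare\|_{C_\frak{m}\to C_\frak{m}}$ is finite, continuous on $[0,\vare^*)$, and $\to0$ as $\vare\to0^+$ — e.g.\ the $\al$-kernel part has norm $\le(2+e^{-\mu_1\tau})\kappa'(\al(\vare))\to0$, while the $\be$-kernel part is controlled through $\int_0^{+\infty}\bigl(e^{-\be(\vare)u}-e^{-u}\bigr)e^{\mu_2 u}\,du=\frac{1}{\be(\vare)-\mu_2}-\frac{1}{1-\mu_2}\to0$ (and the analogous scalar integral with $e^{\mu_1 u}$), together with $1/\sqrt{1+4\vare}\to1$. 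In particular $\mathcal{L}_0=0$. Setting $C(\vare,\sigma):=\|\mathcal{L}_\vare\|+2\sigma\,c_2(\vare)$, which is non-negative, continuous and satisfies $C(0,0)=0$, the decomposition $H=\mathcal{L}_\vare+\mathcal{S}_\vare+\mathcal{Q}_\vare$ gives at once both inequalities in (\ref{e3.7}) ($\mathcal{S}_\vare$ cancels in the difference $H(\vare,w)-H(\vare,v)$).

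Finally, for the continuity of $H$ on $[0,\vare^*)\times B_\sigma^\frak{m}(0)$ I would use $|H(\vare_n,w_n)-H(\vare_0,w_0)|_\frak{m}\le C(\vare_n,\sigma)|w_n-w_0|_\frak{m}+|H(\vare_n,w_0)-H(\vare_0,w_0)|_\frak{m}$: the first term vanishes since $C(\cdot,\sigma)$ is locally bounded and $w_n\to w_0$, and the second since $\vare\mapsto\mathcal{L}_\vare$ (in operator norm), $\vare\mapsto\mathcal{S}_\vare$ (in $C_\frak{m}$) and the convolution operator underlying $\mathcal{Q}_\vare$ all depend continuously on $\vare\in[0,\vare^*)$, this continuity reducing — as in the previous paragraph — to dominated convergence applied to explicit scalar integrals in the variable $u=s-t$, the coefficients $\al(\vare),\be(\vare),1/\sqrt{1+4\vare}$ being continuous up to $\vare=0$. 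I expect the main obstacle to be precisely this interaction between the exponential weights of $C_\frak{m}$ and the $\vare$-dependent kernels near the singular value $\vare=0$, where $\al(\vare)\to-\infty$, $\be(\vare)\to1$ and the operator structure of $H$ degenerates: one must verify that the genuinely $\vare>0$ contributions disappear uniformly in $t$ as $\vare\to0^+$, which is what forces $C(0,0)=0$ and $D(0)=0$. By comparison, the bilinear estimate on $N$ and the bound on $\mathcal{S}_\vare$ are routine once the exponential gains have been isolated.
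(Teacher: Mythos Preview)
Your proof is correct and follows essentially the same route as the paper's. The paper decomposes $H=H_1+H_2+H_3$ according to the three convolution kernels, while you regroup the same terms as $\mathcal{L}_\vare+\mathcal{S}_\vare+\mathcal{Q}_\vare$ according to whether they are linear, constant, or quadratic in $w$; the underlying estimates (the weighted convolution bounds, the verification that $\phi_0''\in C_\frak{m}$ from the hyperbolicity of the equilibria, the vanishing of the $L$-contributions as $\vare\to 0^+$, and the triangle-inequality argument for joint continuity) are identical. One cosmetic difference: for the quadratic term the paper uses the simpler pointwise bound $|w(s)w(s-\tau)|\le |w|_\infty|w(s)|\le \sigma|w(s)|$ (since $|w|_\infty\le|w|_\frak{m}$ when $\mu_1<0<\mu_2$), which avoids your bilinear product estimate in $C_\frak{m}$, but both arguments give the same $C(\vare,\sigma)$ up to harmless constants.
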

\begin{proof}
 We write $H=H_1+H_2+H_3$, where $H_1(0,w)= H_3(0,w)\equiv 0$, $H_2(0,w) =H(0,w)$ and, for $\vare >0$, 
 $$
H_1(\vare,w)(t) =\int^{+\infty}_t \left[
\frac{e^{\be(\vare)(t-s)}}{\sqrt{1+4\vare}}  - e^{(t-s)}\right]L(s,w_s)ds;$$
$$H_2(\vare,w)(t)  ={1\over {\sqrt{1+4\vare}}}\int^{+\infty}_t{{e^{\be(\vare)(t-s)}}}G(\vare,s,w_s)ds;  $$
$$H_3(\vare,w)(t) ={1\over {\sqrt{1+4\vare}}}\int_{-\infty}^t{{e^{\al(\vare)(t-s)}}}(L(s,w_s) + G(\vare,s,w_s))ds.$$ 
For
$t\in\mathbb{R},\ \vare> 0$, $j=1,2,$ we have
$$
\left |\int^{+\infty}_t \left(
\frac{e^{\be(\vare)(t-s)}}{\sqrt{1+4\vare}} - e^{(t-s)}\right)e^{\mu_js}ds\right|=
  \left [{{1}\over {\sqrt{1+4\vare}}}
 {{1}\over {\be(\vare)-\mu_j}}-{ {1}\over {1-\mu_j}}\right] e^{\mu_j
 t}=:  c_j(\vare)e^{\mu_j t},$$ where $ c_j(0^+)=0. 
 $
 As a consequence, setting $c_3(\vare):= (c_1(\vare)+c_2(\vare))
(2+e^{-\mu_1\tau})$, we obtain
\begin{equation}\label{e3.9}
 H_1(\vare,w)\in C_\frak{m}, \ |H_1(\vare,w)-H_1(\vare, v)|_\frak{m}\le c_3(\vare)|w-v|_\frak{m},\q w,v\in C_\frak{m},\vare> 0.
\end{equation}
Next, for
$t\in\mathbb{R},$ $\vare \geq 0,$ $ w,v\in  B_\sigma^\frak{m}(0)$, 
we have 
$$|G(\vare,t,w_t)|=|\epsilon {\phi_0}''(t) + w(t)w(t-\tau)|\le
\vare|\phi_0''(t)|+ \sigma |w(t)|, $$ 
$$
|G(\vare, t,w_t)-G(\vare,t,v_t)|\le
\sigma (|w(t)-v(t)| + |w(t-\tau)-v(t-\tau)|).
$$
Now, since the equilibria $0, 1$ of equation (\ref{pfel}) are hyperbolic (cf. Lemma \ref{hub}),  $\phi_0(t)$  converges to the limits 
$\phi_0(+\infty)=1$ and $\phi_0(-\infty)=0$ at exponential rate.  In fact, 
there exist finite $\lim_{t\to +\infty} (1 - \phi_0(t))e^{t}$ and 
$\lim_{t\to -\infty}\phi_0(t)e^{-z_1(\tau)t}$, see e.g. \cite{GT} for more details. As a consequence, we conclude from  
$\phi_0'(t) = \phi_0(t-\tau)(1-\phi_0(t))$, $\phi_0''(t) = \phi_0'(t-\tau)(1-\phi_0(t)) - \phi_0(t-\tau)\phi_0'(t)$ that  $\phi_0', \phi_0''\in C_\frak{m}$. It follows from the above estimates that, for all $v, w\in   B_\sigma^\frak{m}(0)$, $\vare \geq 0$,  
\begin{eqnarray}\label{e3.13}
\nonumber & & |H_2(\vare,w)|_{\frak{m} } \le {2\over
{(\be(\vare)-\mu_2)\sqrt{1+4\vare}}}(\vare|{\phi_0}''|_{\frak{m} }+\sigma
|w|_{\frak{m} }), \\
\nonumber   & & |H_2(\vare,w)-H_2(\vare, v)|_{\frak{m} }\le{2\sigma(1+ e^{-\mu_1\tau}) \over
{(\be(\vare)-\mu_2)\sqrt{1+4\vare}}} |w-v|_{\frak{m} }, \\
\nonumber  & & |H_3(\vare,w)|_{\frak{m} } \le {2\over
{(\mu_1-\al(\vare))\sqrt{1+4\vare}}} \Big
[\vare|{\phi_0}''|_{\frak{m} }+(2+e^{-\mu_1 \tau}+\sigma)|w|_{\frak{m} }\Big],\\
\nonumber   & & |H_3(\vare,w)-H_3(\vare, v)|_{\frak{m} }\le {2{(2+e^{-\mu_1 \tau})(1+\sigma)}\over
{(\mu_1-\al(\vare))\sqrt{1+4\vare}}}|w-v|_{\frak{m} }.
\end{eqnarray}
 From these inequalities, for $\vare\geq 0$ small enough we
obtain  that (\ref{e3.7}) holds for all 
$ w,v\in B_\sigma^\frak{m}(0),
$
with $C(\vare, \sigma), D(\vare)$ given by
$$C(\vare,\sigma)=c_3(\vare)+{2\sigma(1+ e^{-\mu_1\tau}) \over
{(\be(\vare)-\mu_2)\sqrt{1+4\vare}}}+{{2(2+e^{-\tau\mu_1})(1+\sigma)}\over
{(\mu_1-\al(\vare))\sqrt{1+4\vare}}},\q 
$$
$$
D(\vare)=\Big({1\over
{\be(\vare)-\mu_2}}+ {1\over
{\mu_1-\al(\vare)}} \Big)
\frac{2\vare|{\phi_0}''|_{(-1,z_1(\tau))}}{\sqrt{1+4\vare}} .$$ 

\noindent Since  $c_3(0)= 0,
\al(0^+)=-\infty$, we obtain that $C(0,0) = D(0) =0$.

Finally, it remains to prove that  the function 
 $H: [0, \epsilon^*)\times  B_\sigma^\frak{m}(0) \to  C_{\frak{m}}$ is continuous. It is easy to show that $H(\epsilon, w) \to H(\epsilon_0, w)$ in $C_{\frak{m}}$ as $\epsilon \to \vare_0$,  uniformly with respect to $w$ from bounded subsets of $C_{\frak{m}}$.  For instance, 
 the proof of such a convergence $H_1(\epsilon, w) \to H_1(0, w), \ \vare \to 0^+,$ follows from (\ref{e3.9}).  But then, due to 
 $(\ref{e3.7})$, the mapping  $(\epsilon, w) \to H(\epsilon, w)$ is continuous in $\epsilon$, $w$. 
\qed
\end{proof}
Next, for $\vare\geq 0$ small, we look for a solution $w\in C_{\frak{m}}$ of (\ref{e3.5}). 
We first apply a Lyapunov-Schmidt reduction. From Lemmas \ref{L3.2}
and \ref{L3.3}, it follows that $X_{\frak{m}}:=Ker\, (J|_{C_{\frak{m}}})$ is finite
dimensional, hence there is a complementary subspace $Y_{\frak{m}}$ in
$C_{\frak{m}}$ such that 
$
C_{\frak{m}}=X_{\frak{m}} \oplus Y_{\frak{m}}.
$
For $w\in C_{\frak{m}}$, write $w=\xi+\eta$ with $\xi\in X_{\frak{m}},\ \eta\in
Y_{\frak{m}}$. Define $S_{\frak{m}}:=J_{\frak{m}}|_{Y_{\frak{m}}}$. Since $S_{\frak{m}}:Y_{\frak{m}}\to C_{\frak{m}}$
is bounded and bijective,  $S_{\frak{m}}^{-1}$ is bounded. In the space
$C_{\frak{m}}$,  (\ref{e3.5}) is equivalent to $\eta
=S_{\frak{m}}^{-1}H(\vare, \xi+\eta)$, therefore we look for fixed points
$\eta\in Y_{\frak{m}}$ of the map
\begin{equation}\label{e3.17}
{\cal F}_{\frak{m}}(\vare,\xi,\eta)=S_{\frak{m}}^{-1}H(\vare, \xi+\eta).
\end{equation}
The following result is straightforward.
 \begin{theorem} \label{theorem3.2} Let $\frak{m} = (\mu_1,\mu_2)$  and $-1 < \mu_1 < 0 < \mu_2 < z_1(\tau)$ be such that there are no zeros of $\chi_1(z)$  with $\Re z =\mu_2$.  Then there exist $\vare^*>0$,  $\sigma >0$, such that  the following  holds:  for each fixed $\vare \in [0, \vare^*]$, the set of all 
wavefronts $\psi$ to  (\ref{pfe})  satisfying $|\psi-\phi_0|_\frak{m} < \sigma$ forms a $r_\frak{m}$-dimensional
manifold
 $${\cal M}_{\frak{m}, \vare}=\{ \psi: \psi=\phi_0+\xi +\eta (\vare, \xi),\ {\rm for}\ \xi\in X_\frak{m}\cap B_\sigma ^\frak{m}(0)\} ,$$
where  $\eta (\vare, \xi)$ is the fixed point
of ${\cal F}_\frak{m}(\vare,\xi,\cdot)$ in $Y_\frak{m}\cap B_\sigma^\frak{m} (0)$ such that $\eta (0, 0)=0$ 
and the function $(\vare, \xi)\in 
[0,\vare^*]\times (X_\frak{m}\cap \overline{B_\sigma^{\frak{m}} (0)}) \to \eta (\vare, \xi) \in C_\frak{m}$ is continuous.
\end{theorem}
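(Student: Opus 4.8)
The statement will follow from the uniform contraction principle applied to the map $\mathcal{F}_{\frak{m}}$ of (\ref{e3.17}), together with Lemmas \ref{L3.1}--\ref{L3.5}. First I would set up the Lyapunov--Schmidt splitting: by Lemmas \ref{L3.2}--\ref{L3.3}, $X_{\frak{m}}:=Ker(J_{\frak{m}})=Ker(T_{\frak{m}})$ is an $r_{\frak{m}}$-dimensional, hence complemented, subspace of $C_{\frak{m}}$; choose a closed complement $Y_{\frak{m}}$ with bounded projections $P:C_{\frak{m}}\to X_{\frak{m}}$, $Q=I-P:C_{\frak{m}}\to Y_{\frak{m}}$, and set $S_{\frak{m}}:=J_{\frak{m}}|_{Y_{\frak{m}}}$, a bounded bijection onto $C_{\frak{m}}$ by Lemma \ref{L3.3}, so that $\kappa:=\|S_{\frak{m}}^{-1}\|<\infty$. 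As already observed, $w\in C_{\frak{m}}$ solves (\ref{e3.5}) if and only if, writing $w=\xi+\eta$ with $\xi\in X_{\frak{m}}$, $\eta\in Y_{\frak{m}}$, one has $\eta=\mathcal{F}_{\frak{m}}(\vare,\xi,\eta)=S_{\frak{m}}^{-1}H(\vare,\xi+\eta)$; so it suffices to produce, for all small $\vare$ and $\xi$, a unique small fixed point of $\mathcal{F}_{\frak{m}}(\vare,\xi,\cdot)$ in $Y_{\frak{m}}$.

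The core of the argument is the choice of the radii. For $\xi,\eta,\eta'$ in $B_\sigma^{\frak{m}}(0)$ (so that $\xi+\eta,\xi+\eta'\in B_{2\sigma}^{\frak{m}}(0)$), the estimates (\ref{e3.7}) of Lemma \ref{L3.5} give
$$
|\mathcal{F}_{\frak{m}}(\vare,\xi,\eta)-\mathcal{F}_{\frak{m}}(\vare,\xi,\eta')|_{\frak{m}}\leq\kappa\,C(\vare,2\sigma)\,|\eta-\eta'|_{\frak{m}},\qquad |\mathcal{F}_{\frak{m}}(\vare,\xi,\eta)|_{\frak{m}}\leq\kappa\big(2\sigma\,C(\vare,2\sigma)+D(\vare)\big).
$$
Since $C(0,0)=D(0)=0$ and $C,D$ are continuous, I would first fix an auxiliary $\vare_1>0$ strictly smaller than the constant of Lemma \ref{L3.5}, then choose $\sigma>0$ so small that $\kappa\,C(\vare,2\sigma)\leq\frac{1}{4}$ for all $\vare\in[0,\vare_1]$, and finally take $\vare^*\in(0,\vare_1]$ so small that $\kappa\,D(\vare)\leq\frac{\sigma}{2}$ for $\vare\in[0,\vare^*]$. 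Then $\mathcal{F}_{\frak{m}}(\vare,\xi,\cdot)$ maps the closed ball $\overline{B_\sigma^{\frak{m}}(0)}\cap Y_{\frak{m}}$ into itself and is a $\frac{1}{4}$-contraction there, so Banach's theorem produces a unique fixed point $\eta(\vare,\xi)$; at $(\vare,\xi)=(0,0)$ one has $H(0,0)=0$, whence $\eta(0,0)=0$. Continuity of $(\vare,\xi)\mapsto\eta(\vare,\xi)$ on $[0,\vare^*]\times(X_{\frak{m}}\cap\overline{B_\sigma^{\frak{m}}(0)})$ then follows from the uniform contraction estimate
$$
|\eta(\vare,\xi)-\eta(\vare',\xi')|_{\frak{m}}\leq\frac{4}{3}\,\kappa\,\big|H(\vare,\xi+\eta(\vare',\xi'))-H(\vare',\xi'+\eta(\vare',\xi'))\big|_{\frak{m}},
$$
whose right-hand side tends to $0$ as $(\vare,\xi)\to(\vare',\xi')$ by the joint continuity of $H$ established in Lemma \ref{L3.5}.

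Finally I would translate this into the manifold statement. Each $w=\xi+\eta(\vare,\xi)$ solves (\ref{e3.5}), hence $\psi:=\phi_0+w$ solves the differential equation in (\ref{pfe}) (without the constraint $\psi<1$), and since $w\in C_{\frak{m}}$ with $\mu_1<0<\mu_2$ forces $w(\pm\infty)=0$ while $\phi_0(-\infty)=0$, $\phi_0(+\infty)=1$, the function $\psi$ has the required heteroclinic limits, so it is a wavefront to (\ref{pfe}) with $|\psi-\phi_0|_{\frak{m}}<\sigma$ (absorb the factor $2$ into the choice of $\sigma$). Conversely, a wavefront $\psi$ with $|\psi-\phi_0|_{\frak{m}}$ small enough yields $w=\psi-\phi_0$ solving (\ref{e3.5}) with $|Pw|_{\frak{m}},|Qw|_{\frak{m}}\leq\sigma$ (absorbing $\max\{\|P\|,\|Q\|\}$ into the smallness threshold), so $Qw$ must coincide with the unique fixed point $\eta(\vare,Pw)$, i.e.\ $\psi=\phi_0+\xi+\eta(\vare,\xi)$ with $\xi=Pw\in X_{\frak{m}}\cap B_\sigma^{\frak{m}}(0)$; thus the set of these wavefronts is exactly $\mathcal{M}_{\frak{m},\vare}$. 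Since $\xi\mapsto\phi_0+\xi+\eta(\vare,\xi)$ is continuous and injective (its value minus $\phi_0$ has $X_{\frak{m}}$-component equal to $\xi$, so $\psi\mapsto P(\psi-\phi_0)$ is a continuous inverse), $\mathcal{M}_{\frak{m},\vare}$ is the graph of a continuous map over an open subset of the $r_{\frak{m}}$-dimensional space $X_{\frak{m}}$, hence an $r_{\frak{m}}$-dimensional manifold. I expect the only delicate point to be the order of the quantifiers when fixing $\sigma$ before $\vare^*$, which is what makes the self-mapping property $\mathcal{F}_{\frak{m}}(\vare,\xi,\cdot):\overline{B_\sigma^{\frak{m}}(0)}\cap Y_{\frak{m}}\to\overline{B_\sigma^{\frak{m}}(0)}\cap Y_{\frak{m}}$ work; the rest is routine bookkeeping with Lemma \ref{L3.5}.
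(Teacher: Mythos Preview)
Your proposal is correct and follows essentially the same approach as the paper: a Lyapunov--Schmidt reduction followed by the uniform contraction principle applied to $\mathcal{F}_{\frak{m}}(\vare,\xi,\cdot)=S_{\frak{m}}^{-1}H(\vare,\xi+\cdot)$, using the estimates of Lemma~\ref{L3.5}. Your write-up is in fact more thorough than the paper's (you use the correct constant $C(\vare,2\sigma)$ rather than $C(\vare,\sigma)$, and you spell out the converse direction and the manifold/graph structure); the only imprecision is the one you yourself flag at the end---when you ``first fix $\vare_1$'' you must already take it small enough that $\kappa\,C(\vare,0)<\tfrac14$ on $[0,\vare_1]$, otherwise no choice of $\sigma$ will give $\kappa\,C(\vare,2\sigma)\le\tfrac14$ there.
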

\begin{proof} 
 Fix  $k\in (0,1)$. From Lemma
\ref{L3.5}, there are $\sigma>0$ and
$\vare^*>0$ such that for $0\leq \vare\leq \vare^*$, $\xi \in
X_\frak{m}\cap \overline{B_\sigma^\frak{m} (0)}$ and $\eta_1,\eta_2\in Y_\frak{m}\cap
\overline{B_\sigma ^\frak{m}(0)}$ we have
$$
 |S^{-1}_\frak{m}H(\vare, \xi+\eta_1)|_{\frak{m}}\le   \|S^{-1}_\frak{m}\| \left(C(\vare,\sigma)|\xi+\eta_1|_\frak{m}+D(\vare)\right) < \sigma, \q {\cal F}_{\frak{m}}(0,0,0)=0, 
$$
$$
 |S^{-1}_{\frak{m}}(H(\vare, \xi+\eta_1)-H(\vare, \xi+\eta_2))|_\frak{m} \le  C(\vare,\sigma)\|S^{-1}_{\frak{m}}\| |\eta_1-\eta_2|_\frak{m} \leq k|\eta_1-\eta_2|_\frak{m}. 
$$

\vspace{3mm}

\noindent Hence, 
${\cal F}_{\frak{m}}:[0,\vare^*]\times (X_\frak{m}\cap \overline{B_\sigma^{\frak{m}}
(0)})\times (Y_\frak{m}\cap \overline{B_\sigma ^{\frak{m}}(0)})\to Y_\frak{m}\cap
\overline{B_\sigma^{\frak{m}} (0)}$  is a uniform contraction map of $\eta \in
Y_\frak{m}\cap \overline{B_\sigma^{\frak{m}} (0)}$. Therefore  for $(\vare, \xi)\in
[0,\vare^*]\times (X_\frak{m}\cap \overline{B_\sigma^{\frak{m}} (0)})$ there is a
unique solution $\eta (\vare, \xi)\in Y_\frak{m}$
of (\ref{e3.17}), which depends continuously on $\vare, \xi$.   \qed
\end{proof}
\begin{cor}\label{Cor3.1} If  $0<\mu_2<z_1(\tau)$ is such that the strip $\{
z \in \mathbb{C}: \Re z \in [\mu_2, z_1(\tau))\}$ does not contain zeros
of $\chi_1(z)$, then the manifold ${\cal M}_1={\cal M}_{\frak{m}, \vare}$ is 1-dimensional.
If $\mu_2>0$ is small and $\tau \in (3\pi/2, 7\pi/2)$, then the manifold ${\cal M}_3={\cal M}_{\frak{m}, \vare}$ is 3-dimensional. Moreover, ${\cal M}_1 \subset {\cal M}_3$. \end{cor}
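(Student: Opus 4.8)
The plan is to evaluate the integer $r_{\frak m}=\#\{z\in\C:\chi_1(z)=0,\ \Re z>\mu_2\}$ of Lemma \ref{L3.2} for two admissible choices of $\mu_2$, feed the result into Theorem \ref{theorem3.2}, and then obtain the inclusion from a trivial embedding of the weighted spaces. Fix $\mu_1\in(-1,0)$ once and for all, to be used in both $\frak m_1$ and $\frak m_3$. For the first statement, set $\frak m_1=(\mu_1,\mu_2)$ with $\mu_2$ as in the hypothesis. By Lemma \ref{hub} the only zeros of $\chi_1$ with $\Re z\ge 0$ are $z_1(\tau)\in(0,1)$ and $z_2(\tau)$, $z_3(\tau)=\bar z_2(\tau)$, with $\Re z_2(\tau)<z_1(\tau)$; the assumption that the strip $\{\Re z\in[\mu_2,z_1(\tau))\}$ is free of zeros then forces $\Re z_2(\tau)<\mu_2<z_1(\tau)$, so $\chi_1$ has no zero on $\{\Re z=\mu_2\}$ and $z_1(\tau)$ is the unique zero with $\Re z>\mu_2$. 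Hence $r_{\frak m_1}=1$, and Theorem \ref{theorem3.2} yields that $\mathcal M_1=\mathcal M_{\frak m_1,\vare}$ is $1$-dimensional for all small $\vare\ge 0$. For the second statement, Lemma \ref{hub} also gives $\Re z_2(\tau)>0$ when $\tau\in(3\pi/2,7\pi/2)$, so we may choose $\frak m_3=(\mu_1,\mu_2')$ with $0<\mu_2'<\Re z_2(\tau)$; then all three zeros $z_1(\tau),z_2(\tau),z_3(\tau)$ have real part $>\mu_2'$, no zero lies on $\{\Re z=\mu_2'\}$, so $r_{\frak m_3}=3$, and Theorem \ref{theorem3.2} gives that $\mathcal M_3=\mathcal M_{\frak m_3,\vare}$ is $3$-dimensional. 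Both $\frak m_1$ and $\frak m_3$ meet the hypotheses $-1<\mu_1<0<\mu_2<z_1(\tau)$ of Theorem \ref{theorem3.2}.

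For the inclusion $\mathcal M_1\subset\mathcal M_3$, the key observation is that $\frak m_1$ and $\frak m_3$ share the same $\mu_1$ and $0<\mu_2'<\mu_2$, so $e^{-\mu_2' s}\le e^{-\mu_2 s}$ for all $s\le 0$; by the definition of the norms this gives $|y|_{\frak m_3}\le|y|_{\frak m_1}$ for every $y\in C_{\frak m_1}$, and in particular a continuous inclusion $C_{\frak m_1}\hookrightarrow C_{\frak m_3}$. I would apply Theorem \ref{theorem3.2} first to $\frak m_3$, producing constants $\sigma_3>0$, $\vare_3^*>0$, and then to $\frak m_1$; since in the proof of Theorem \ref{theorem3.2} the radius $\sigma$ may be taken arbitrarily small (after which $\vare^*$ is shrunk accordingly, the constants $C(\vare,\sigma)$, $D(\vare)$ tending to $0$ as $\sigma,\vare\to 0^+$), I arrange that the radius $\sigma_1$ and the threshold $\vare_1^*$ for $\frak m_1$ satisfy $\sigma_1\le\sigma_3$ and $\vare_1^*\le\vare_3^*$, and I restrict $\vare$ to $[0,\vare_1^*]$. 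Now take any $\psi\in\mathcal M_1$: it is a wavefront of (\ref{pfe}) with $|\psi-\phi_0|_{\frak m_1}<\sigma_1$, hence $\psi-\phi_0\in C_{\frak m_3}$ and $|\psi-\phi_0|_{\frak m_3}\le|\psi-\phi_0|_{\frak m_1}<\sigma_1\le\sigma_3$. By the characterization in Theorem \ref{theorem3.2}, $\mathcal M_3$ is precisely the set of wavefronts of (\ref{pfe}) lying within $\frak m_3$-distance $\sigma_3$ of $\phi_0$, so $\psi\in\mathcal M_3$. This gives $\mathcal M_1\subset\mathcal M_3$.

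The zero-counting is immediate from Lemma \ref{hub}, and the inequality $|\cdot|_{\frak m_3}\le|\cdot|_{\frak m_1}$ is a one-line computation; the only delicate point is the compatibility of the two pairs of constants $(\sigma,\vare^*)$ coming from the two applications of Theorem \ref{theorem3.2}. I expect this bookkeeping — checking that shrinking $\sigma$ (and then $\vare^*$) preserves the uniform contraction in the proof of Theorem \ref{theorem3.2}, so that a common $(\sigma,\vare^*)$ can be used — to be the main thing to get right, rather than any genuine difficulty.
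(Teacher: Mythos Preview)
Your proposal is correct and matches the intended argument: the paper states Corollary \ref{Cor3.1} without proof, treating both dimension counts as immediate from the root-count $r_{\frak m}$ of Lemma \ref{L3.2} (via Lemma \ref{hub}) plugged into Theorem \ref{theorem3.2}, and the inclusion $\mathcal M_1\subset\mathcal M_3$ as a consequence of the obvious embedding $C_{\frak m_1}\hookrightarrow C_{\frak m_3}$ for $\mu_2'<\mu_2$. Your handling of the only genuine bookkeeping issue---arranging $\sigma_1\le\sigma_3$ and $\vare_1^*\le\vare_3^*$ so that the characterization of $\mathcal M_3$ in Theorem \ref{theorem3.2} applies---is exactly what is needed and is implicit in the paper's use of the corollary in the proof of Theorem \ref{Te2r}.
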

\begin{cor}\label{Cor3.2} Under the assumptions of Theorem
\ref{theorem3.2} (and with the same notation) there
is 
$C>0$ such that  the function $\eta (\vare,\xi)$
satisfies
\begin{equation}\label{e51}
|\eta (\vare,\xi)|_\frak{m} \le C, \q |\eta '(\vare,\xi)|_\frak{m} \le C\q
{\rm for}\q 0\leq \vare\leq \vare^*,\  \xi \in X_\frak{m}\cap \overline{B_\sigma
^{\frak{m}}(0)}.
\end{equation}
\end{cor}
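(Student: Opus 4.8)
First I would observe that the bound $|\eta(\vare,\xi)|_{\frak{m}}\le C$ requires nothing beyond Theorem~\ref{theorem3.2}: the fixed point $\eta(\vare,\xi)$ lies in $Y_{\frak{m}}\cap\overline{B_\sigma^{\frak{m}}(0)}$, so $|\eta(\vare,\xi)|_{\frak{m}}\le\sigma$. The real content is the derivative estimate, and the plan is to prove the stronger assertion that $\eta(\vare,\xi)\in C^1_{\frak{m}}$ with $|\eta(\vare,\xi)|_{1,\frak{m}}$ bounded independently of $(\vare,\xi)\in[0,\vare^*]\times(X_{\frak{m}}\cap\overline{B_\sigma^{\frak{m}}(0)})$. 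Set $w:=\xi+\eta(\vare,\xi)$, so $|w|_{\frak{m}}\le 2\sigma$; since $\eta(\vare,\xi)$ is the fixed point of ${\cal F}_{\frak{m}}(\vare,\xi,\cdot)$ and $J_{\frak{m}}\xi=0$ (because $\xi\in X_{\frak{m}}=Ker(J_{\frak{m}})$), the function $w$ solves (\ref{e3.5}), i.e. $J_{\frak{m}}w=H(\vare,w)$. By the explicit formulas of Lemma~\ref{L3.1} one has $(J_{\frak{m}}w)(t)=w(t)+(D_{\frak{m}}^{-1}L(\cdot,w_{\cdot}))(t)$ with $(D_{\frak{m}}^{-1}y)(t)=-\int_t^{+\infty}e^{t-s}y(s)\,ds$, so (\ref{e3.5}) is equivalent to
$$
w=H(\vare,w)-D_{\frak{m}}^{-1}\bigl(L(\cdot,w_{\cdot})\bigr).
$$
Since the right-hand side is manifestly a $C^1$ function of $t$, this already gives $w\in C^1_{\frak{m}}$, hence $\eta(\vare,\xi)=w-\xi\in C^1_{\frak{m}}$; it remains to bound the two terms on the right in the $C^1_{\frak{m}}$-norm.

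The term $D_{\frak{m}}^{-1}(L(\cdot,w_{\cdot}))$ is handled at once by Lemma~\ref{L3.1} and the inequality $|L(\cdot,y_{\cdot})|_{\frak{m}}\le(2+e^{-\mu_1\tau})|y|_{\frak{m}}$ proved there: $|D_{\frak{m}}^{-1}(L(\cdot,w_{\cdot}))|_{1,\frak{m}}\le(3+\frac{2}{1-\mu_2})(2+e^{-\mu_1\tau})|w|_{\frak{m}}$, at most $2\sigma$ times a constant depending only on $\frak{m}$. For $H(\vare,w)$ I would reuse the splitting $H=H_1+H_2+H_3$ from the proof of Lemma~\ref{L3.5} (with $H_1(0,\cdot)=H_3(0,\cdot)\equiv 0$ and $H_2(0,\cdot)=H(0,\cdot)$). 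Each $H_j(\vare,w)$ is an integral of the continuous $C_{\frak{m}}$-functions $L(\cdot,w_{\cdot})$ and $G(\vare,\cdot,w_{\cdot})$ against a one-sided exponential kernel, so it is $C^1$ in $t$, and for $\vare>0$ Leibniz' rule gives
$$
H_1'(\vare,w)(t)=\Bigl(1-\frac{1}{\sqrt{1+4\vare}}\Bigr)L(t,w_t)+\frac{\be(\vare)}{\sqrt{1+4\vare}}\int_t^{+\infty}e^{\be(\vare)(t-s)}L(s,w_s)\,ds-\int_t^{+\infty}e^{t-s}L(s,w_s)\,ds,
$$
$$
H_2'(\vare,w)(t)=-\frac{G(\vare,t,w_t)}{\sqrt{1+4\vare}}+\be(\vare)H_2(\vare,w)(t),\qquad H_3'(\vare,w)(t)=\frac{L(t,w_t)+G(\vare,t,w_t)}{\sqrt{1+4\vare}}+\al(\vare)H_3(\vare,w)(t).
$$
Since $\be(\vare)\to 1$ and $\sqrt{1+4\vare}\to 1$ as $\vare\to 0^+$, and $\phi_0',\phi_0''\in C_{\frak{m}}$ (established in the proof of Lemma~\ref{L3.5}), the estimates of that proof bound the $C_{\frak{m}}$-norms of $L(\cdot,w_{\cdot})$, $G(\vare,\cdot,w_{\cdot})$, $H_1(\vare,w)$ and $H_2(\vare,w)$ by a constant depending only on $\frak{m}$, $\sigma$, $\vare^*$ and $|\phi_0''|_{\frak{m}}$; hence the same holds for $H_1'(\vare,w)$ and $H_2'(\vare,w)$.

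The step I expect to be the main obstacle is the term $\al(\vare)H_3(\vare,w)$ in $H_3'$, since $\al(\vare)\to-\infty$ as $\vare\to 0^+$. The plan is to exploit the cancellation already built into Lemma~\ref{L3.5}: re-running its computation for $H_3$ with $|w|_{\frak{m}}\le 2\sigma$ yields an estimate of the form $|H_3(\vare,w)|_{\frak{m}}\le C_3/(\mu_1-\al(\vare))$ with $C_3$ depending only on $\frak{m},\sigma,\vare^*$, so $|\al(\vare)H_3(\vare,w)|_{\frak{m}}\le C_3\,|\al(\vare)|/(\mu_1-\al(\vare))$. The prefactor $|\al(\vare)|/(\mu_1-\al(\vare))$ tends to $1$ as $\vare\to 0^+$ and stays bounded on $(0,\vare^*]$ (recall $\vare^*$ is small, so $|\al(\vare)|\ge|\al(\vare^*)|$ is large relative to $|\mu_1|<1$), while the term vanishes at $\vare=0$; thus $\sup_{0\le\vare\le\vare^*}|\al(\vare)H_3(\vare,w)|_{\frak{m}}$ is finite, uniformly in $w$ with $|w|_{\frak{m}}\le 2\sigma$. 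Collecting the three pieces gives $|H(\vare,w)|_{1,\frak{m}}\le C_1$ uniformly, whence, with the preceding paragraph, $|w|_{1,\frak{m}}\le C_2$ uniformly. Finally $\xi\in X_{\frak{m}}=Ker(T_{\frak{m}})$ solves (\ref{e3.6}), so $\xi'=\xi-L(\cdot,\xi_{\cdot})$ and $|\xi'|_{\frak{m}}\le(3+e^{-\mu_1\tau})|\xi|_{\frak{m}}\le(3+e^{-\mu_1\tau})\sigma$; therefore $|\eta'(\vare,\xi)|_{\frak{m}}\le|w'|_{\frak{m}}+|\xi'|_{\frak{m}}\le C_2+(3+e^{-\mu_1\tau})\sigma=:C$, and enlarging $C$ so that $C\ge\sigma$ subsumes the bound on $|\eta(\vare,\xi)|_{\frak{m}}$ as well.
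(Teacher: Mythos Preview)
Your proof is correct and uses essentially the same mechanism as the paper: the only nontrivial point is the $\al(\vare)$-factor appearing in the derivative, and both you and the paper control it via the $O(1/(\mu_1-\al(\vare)))$ decay of the corresponding integral. The paper organizes the computation slightly differently --- it passes directly to the second-order equation $\epsilon\eta''+\eta'-\eta=(N\eta)$ satisfied by $\eta$ and differentiates its Green's function representation, rather than differentiating $H=H_1+H_2+H_3$ and subtracting $\xi$ at the end --- but the content is the same.
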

\begin{proof}   Since the function $\eta(\vare, \xi)$ is continuous on the compact set $ 
[0,\vare^*]\times (X_\frak{m}\cap \overline{B_\sigma^{\frak{m}} (0)})$,  the first estimate in (\ref{e51}) with  $C$ independent  of $\vare,\xi$ is obvious. 

Next, as we know,  $\phi_0', \phi_0''\in C_\frak{m}$. Similarly, {$\xi', \xi''\in C_\frak{m}$} because 
$$\xi'(t)=-\phi_0(t-\tau)\xi(t)-
(\phi_0(t)-1)\xi(t-\tau).$$
 In addition, 
 since $\psi (t):=\psi (\vare,\xi)(t)= \phi_0(t)+\xi(t) +\eta (\vare, \xi)(t)$ is a bounded solution of (\ref{pfe}),
we find that $\epsilon \eta'' +\eta' - \eta = (N\eta)$, where 
$(N\eta)(t):= - \epsilon(\phi_0''(t)+\xi''(t)) -\xi(t-\tau)(\eta(t)+\xi(t)) - (1+\phi_0(t-\tau)+ \eta(t-\tau))\eta(t) + \eta(t-\tau)(1-\phi_0(t) - \xi(t))$ satisfies, for some positive $C$, the inequality $|N\eta(\epsilon,\xi)|_\frak{m} \leq C$ for all $\xi \in X_{\frak m}\cap \overline{B_\sigma^{\frak{m}}(0)}$ and $0\leq \vare\leq \vare^*$. Consequently,  for $\epsilon >0$, 
$$
 \eta (t)={1\over {\sqrt{1+4\vare}}}\left ( \int_{-\infty}^t
e^{\al(\vare)(t-s)} (N\eta)(s)ds+ \int^{+\infty}_t
e^{\be(\vare)(t-s)}(N\eta)(s)ds\right ), 
$$
from which we derive
$$ \eta'(\epsilon,\xi)(t)={1\over {\sqrt{1+4\vare}}}\Big
(\al(\vare)\hspace{-2mm} \int\limits_{-\infty}^t
e^{\al(\vare)(t-s)}(N\eta)(s)ds
 -\be(\vare)\hspace{-2mm}\int\limits^{+\infty}_t e^{\be(\vare)(t-s)}(N\eta)(s)
ds\Big ).
$$
We also have that
$
\eta'(0,\xi) = \eta + N\eta(0,\xi)$. 
Thus  there is
$C_1>0$ independent of $\epsilon, \xi$ and such that $|\eta'(\epsilon,\xi)|_\frak{m} \le C_1$ for all $\xi \in X_{\frak m}\cap \overline{B_\sigma^{\frak{m}}(0)}$ and $\vare \in [0,\vare^*]$. 
This completes the
proof. \hfill $\square$
\end{proof}

\section{Proof of the second part of Theorem \ref{Te2}} \label{S6}

In this section, we prove that the non-local KPP-Fisher equation (\ref{17nl})  can possess fast semi-wavefronts connecting trivial equilibrium and positive periodic solution oscillating around $1$:
\begin{theorem} \label{Te2A} For each $\tau > 3\pi/2$ close to $3\pi/2$ there is $c_*(\tau)>2$ such that equation (\ref{adv}) has  proper semi-wavefronts $u(t,x) = \psi(x+ct, c)$.  The profiles $\psi(\cdot, c)$ are asymptotically  periodic at $+\infty$, with $\omega(c)$-periodic limit functions having periods $\omega(c)$ close to $2\pi c$ and of the sinusoidal form (i.e. oscillating around 1 and having exactly two critical points on the period interval $[0, \omega(c))$). 
\end{theorem}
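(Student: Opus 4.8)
The plan is to realise $\psi(\cdot,c)$, after the substitution $y(t)=1-\phi(-ct)$, $\epsilon=c^{-2}$ of Section~\ref{FP}, as a bounded orbit of equation (\ref{pfe}) which converges to the equilibrium $1$ as $t\to+\infty$ and which, as $t\to-\infty$, approaches (modulo a phase shift) a small periodic solution of (\ref{pfe}) oscillating around the equilibrium $0$; the asymptotic period of $\phi$ at $+\infty$ is then $c$ times the period of that periodic solution. We build such an orbit first for $\epsilon=0$ and then propagate it to all small $\epsilon>0$, i.e. to all large $c$. For $\epsilon=0$, equation (\ref{pfel}) is the scalar delayed equation $y'(t)=y(t-\tau)(1-y(t))$, which has positive (monotone) feedback on $\{y<1\}$ and whose linearisation at $0$ has characteristic function $\chi_1(z)=z-e^{-\tau z}$. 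By Lemma~\ref{hub} the pair $z_2(\tau),\bar z_2(\tau)$ crosses the imaginary axis transversally at $\tau=3\pi/2$ through $\pm i$, so there is a Hopf bifurcation of $0$ at $\tau=3\pi/2$. Computing the first Lyapunov coefficient by the Faria--Magalh\~aes normal-form algorithm for retarded functional-differential equations \cite{FM} (one expects it to be nonzero and of supercritical sign), one obtains, for every $\tau>3\pi/2$ sufficiently close to $3\pi/2$, a hyperbolic slowly oscillating periodic solution $q_0=q_0(\cdot,\tau)$ of (\ref{pfel}) of amplitude $O(\sqrt{\tau-3\pi/2})$, with period $\omega_0(\tau)$ close to $2\pi$ and of sinusoidal form (exactly two critical points on a period). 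Since the real root $z_1(\tau)\in(0,1)$ of $\chi_1$ is always positive, the equilibrium $0$ has a three-dimensional unstable manifold; $q_0$ inherits the $z_1$-direction, hence it is of saddle type and its unstable manifold $W^u(q_0)$ is two-dimensional (the flow direction plus one expanding Floquet direction).

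Next we produce the connection to $1$ at $\epsilon=0$. For $\tau$ as above the only equilibria of $y'(t)=y(t-\tau)(1-y(t))$ are $0$ (linearly unstable) and $1$ (exponentially stable, with linearisation $z'=-z$), and near the Hopf branch $q_0$ is the only periodic orbit. Combining the Mallet-Paret--Sell Poincar\'e--Bendixson theory for monotone cyclic feedback systems with delay \cite{mps,mps2} with the Krisztin--Walther--Wu description of the global attractor for delayed monotone positive feedback \cite{KWW}, any orbit $\psi_0(t)$ lying on $W^u(q_0)$ is bounded, has $\alpha$-limit set $q_0$, and has a single periodic orbit or equilibrium as its $\omega$-limit set; since $0$ is unstable and $\psi_0$ genuinely leaves $q_0$, necessarily $\omega(\psi_0)=\{1\}$. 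Thus $\psi_0$ converges to $1$ as $t\to+\infty$ and approaches $q_0$ (modulo a phase) as $t\to-\infty$: this is the \emph{formal} ($c=\infty$) semi-wavefront.

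Both objects persist for small $\epsilon=c^{-2}>0$. The equilibrium $1$ stays exponentially stable (its characteristic equation becomes $\epsilon z^2+z+1=0$, with two negative roots when $\epsilon<1/4$), and the hyperbolic periodic solution $q_0$ perturbs to a hyperbolic periodic solution $q_\epsilon$ of (\ref{pfe}), with period $\omega_\epsilon(\tau)\to\omega_0(\tau)$ and the same sinusoidal shape, by the Hale--Huang perturbation theory for periodic solutions of functional-differential equations \cite{DH,Hale,hale,HW}. The connection $\psi_0$ persists to a solution $\psi_\epsilon$ of (\ref{pfe}) by the Hale--Lin functional-analytic scheme for singularly perturbed functional-differential equations already used in Section~\ref{S5} and in \cite{fhw,FTnl,GTLMS,HL}: the variational equation of (\ref{pfe}) along the connecting orbit near $-\infty$ (where its coefficients are asymptotic to the $\omega_\epsilon$-periodic ones determined by $q_\epsilon$) and along $1$ near $+\infty$ possess exponential dichotomies which, by the roughness estimate \cite[Lemma~4.3]{HL}, survive as $\epsilon\to0^+$, so the connection problem reduces, through a Lyapunov--Schmidt splitting that incorporates a phase condition along $q_\epsilon$, to a contraction on a small ball and is solvable for $0<\epsilon<\epsilon_0(\tau)$. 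Putting $c_*(\tau):=\epsilon_0(\tau)^{-1/2}$, with $\epsilon_0(\tau)<1/4$ chosen so that $c_*(\tau)>2$, and undoing the substitution gives, for each $c>c_*(\tau)$, a profile $\psi(\cdot,c)$ with $\psi(-\infty,c)=0$ which is asymptotically periodic at $+\infty$, with limit function $\psi_\infty(\xi,c)=1-q_{c^{-2}}(-\xi/c)$ of period $\omega(c)=c\,\omega_{c^{-2}}(\tau)$ close to $2\pi c$ and of the announced sinusoidal form; positivity of $\psi(\cdot,c)$ follows from the variation-of-constants representation of bounded solutions exactly as in the proof of Theorem~\ref{Te2r}.

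The two delicate points, and the main obstacle, are the following. First, the branch must actually emanate for $\tau>3\pi/2$ and the periodic orbit must be hyperbolic, which forces the first Lyapunov coefficient to be nonzero with a definite sign; this is a concrete normal-form computation that cannot be replaced by a soft argument. Second, and more serious, is the persistence of the connecting orbit across the singular limit $\epsilon\to0^+$: because one end of the connection is a periodic orbit rather than an equilibrium, one must control the Floquet exponents of the $\epsilon$-perturbed equation near $q_\epsilon$, install the correct phase condition, and verify that the Fredholm index of the linearised connection operator is the same as for $\epsilon=0$; the singular feature $\al(\epsilon)\to-\infty$ makes these dichotomy estimates as delicate as those in Lemma~\ref{L3.5}. \qed
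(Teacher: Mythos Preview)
Your outline follows essentially the same route as the paper: transform to (\ref{pfe}), produce for $\epsilon=0$ a hyperbolic Hopf periodic solution $p$ of (\ref{wri}) and a heteroclinic $\phi_0$ from $p$ to the equilibrium $1$, then perturb both objects to small $\epsilon>0$ by a Hale--Lin/Lyapunov--Schmidt scheme, and read off the sinusoidal shape from Mallet-Paret--Sell. The architecture is correct and the references you invoke are the right ones. Two places, however, are not merely ``delicate'' but are actual gaps in your argument as written.

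\textbf{The $\epsilon=0$ connection.} Your Poincar\'e--Bendixson step (``any orbit on $W^u(q_0)$ is bounded \dots\ since $0$ is unstable \dots\ necessarily $\omega(\psi_0)=\{1\}$'') does not close. For the positive-feedback equation $z'=F(z(t-\tau))$, the equilibrium $0$ has an infinite-dimensional \emph{stable} manifold (all the roots of $\chi_1$ with $\Re z<0$), so ``$0$ unstable'' does not preclude $\omega(\psi_0)=\{0\}$; nor have you shown boundedness of $\psi_0$ a priori. The paper avoids this by quoting Krisztin--Walther--Wu directly: \cite[Theorem~17.3]{KWW} furnishes a specific orbit $Q$ with $Q(t)-q(t)\to 0$ at $-\infty$ and $Q(t)\to-\infty$ at $+\infty$, which after $1-y=e^{z}$ is exactly the required $\phi_0$. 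For the hyperbolicity of $p$ one also needs more than the Hopf normal form: a supercritical Hopf bifurcation gives stability \emph{inside the two-dimensional center manifold}, but one must still exclude $\dim G_{\R}(1)=2$. The paper combines \cite[Theorem~8.2, Corollary~8.4]{KWW} (exactly one Floquet multiplier outside the closed unit disc; if $\dim G_{\R}(1)=2$ then no slowly oscillating solution can converge exponentially to $q$) with the fact that orbits on the unstable manifold of $0$ \emph{do} converge to $q$ exponentially, forcing $\dim G_{\R}(1)=1$.

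\textbf{The perturbation step.} Describing the persistence as ``a Lyapunov--Schmidt splitting that incorporates a phase condition \dots\ reduces to a contraction'' hides the essential obstruction. Because one end is a periodic orbit, the linearised connection operator $J$ on the natural space $X^a$ (functions asymptotically $\omega$-periodic at $-\infty$, vanishing at $+\infty$) is genuinely Fredholm of positive index: $\mathrm{Ker}\,J\ni\phi_0'$ and $\mathrm{codim}\,\mathrm{Im}\,J=1$, with the cokernel detected by the adjoint periodic solution $p_*$. Thus the fixed-point argument only solves the equation modulo a one-dimensional \emph{bifurcation equation} $\Lambda(\epsilon,\gamma)=0$. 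The paper introduces the period-scaling parameter $\gamma$ precisely to kill this residual equation and proves $D_\gamma\Lambda(0,0)\neq 0$ by showing that its vanishing would force $\dim G_{\R}(1)\geq 2$, contradicting hyperbolicity of $p$. Without isolating this nondegeneracy you cannot conclude solvability; your sentence ``is solvable for $0<\epsilon<\epsilon_0(\tau)$'' is exactly the step that needs this argument. Note also that one does \emph{not} first construct $q_\epsilon$ and then connect to it; the correct period $\omega(1+\gamma(\epsilon))$ and the periodic limit $y_\infty(\cdot,\epsilon)$ come out simultaneously from solving $\Lambda=0$.

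Finally, for the sinusoidal form of the limit function at $\epsilon>0$ you cannot simply inherit it from the $O(\sqrt{\tau-3\pi/2})$ normal-form expansion of $q_0$; the paper rewrites (\ref{pfe}) as a unidirectional monotone positive feedback system and invokes \cite[Theorem~7.1]{mps2}, which gives the two-critical-points property for \emph{any} nonconstant periodic solution, independently of smallness.
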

\begin{remark} In fact, with some more effort, it is possible to establish the existence of 2-dimensional family of proper semi-wavefronts for the above mentioned KPP-Fisher equation, cf. \cite{HW}.  \end{remark}

Our proof  of the existence of a point-to-periodic connection  is based on the perturbation techniques developed by J. Hale in \cite{Hale}, \cite[Section 10.4]{hale} and
W. Huang {\it et al.} in \cite{DH,HW}. In fact, the paper \cite{HW} deals precisely with the problem of point-to-periodic connections for equations with time delay and nonlocal response. However, since there are important 
differences between the frameworks of \cite{HW} and the present paper, the main results from  \cite{HW} do not apply directly  to equation  (\ref{KPPadv}). Still, using the Krisztin-Walther-Wu theory of  delayed monotone positive feedback equations \cite{KWW},  it is possible 
to retrace the main arguments of \cite{Hale,HW} in order to obtain the desired point-to-periodic connections in our case. We are doing this work 
in the present section, where we are paying  special attention to  the arguments which are different from those used in \cite{HW}. The related results are given in Lemmas \ref{L17}, \ref{L18}, \ref{P1}, see also  Remarks \ref{R3}, \ref{R4} below. The final part of this section (after Lemma \ref{P1}) follows closely the arguments  of \cite{Hale,hale,HW}:  for completeness of the exposition, we included this part  as well.    

Analogously to the proof of Theorem \ref{Te2r}, a point-to-periodic connection in equation (\ref{KPPadv})
is obtained as a result of singular perturbation of a  periodic-to-point 
connection $\phi_0$ for  the equation 
\begin{equation}\label{wri}
y'(t) = y(t-\tau)(1-y(t)). 
\end{equation}
This is possible when equation (\ref{wri}) possesses an hyperbolic $\omega-$periodic solution $p(t)$ oscillating around $0$. Our first 
result below, Lemma \ref{L17}, considers this aspect of the problem. Recall that 
the $\omega-$periodic solution $p(t)$ of (\ref{wri}) is hyperbolic if and only if  the linearised $\omega-$periodic equation 
\begin{equation}\label{lp}
z'(t)= - p(t-\tau)z(t) + (1-p(t)) z(t-\tau) 
\end{equation}
has only one Floquet multiplicator $\mu =1$ on the unit circle and, in addition, the realified generalised eigenspace $G_\R(1)$ of this multiplicator is one-dimensional: $G_\R(1)= \{cp', c \in \R\}$.  The hyperbolicity of $p(t)$ implies that the formal adjoint equation \cite{hale,HL}
$$
v'(t)= p(t-\tau)v(t) - (1-p(t+\tau)) v(t+\tau) 
$$
associated with (\ref{lp}) has a unique nonzero $\omega-$periodic solution $v(t)=p_*(t)$ normalised 
by the condition $\int_0^\omega p'(t) p_*(t)dt=1$, see e.g. \cite[pp. 1236-1237]{HW}.  Another consequence of the 
hyperbolicity of $p(t)$ is that equation (\ref{lp}) has a shifted exponential dichotomy on $\R_-$ with  exponents 
$\alpha_1=0 <  \beta_1$ \cite{HL} (as Lemma \ref{L17} shows the unstable space of this dichotomy is one-dimensional). 

Following \cite[Chapter 5]{KWW} and \cite[p. 480]{mps2}, we will say that solution $z(t)$ of equation (\ref{wri}) is slowly oscillating on $[T, +\infty)$ if,  for each fixed $t\geq T$, the function $z(t+s), \ s \in [-\tau,0],$ 
has precisely 1 or 2 sign changes on the interval $[-\tau,0]$ (a continuous function $z(t)$ has a sign change at some point $t_0$ if $z(t_0+\epsilon)z(t_0-\epsilon) <0$ for all small $\epsilon >0$, in particular, $z(t_0)=0$).

\begin{lem} \label{L17}There exists $\tau_0 >3\pi/2$ such that, for every $\tau \in (3\pi/2, \tau_0)$,  equation (\ref{wri}) 
has a  nonconstant hyperbolic periodic solution $p(t) <1, \ t \in \R,$ slowly oscillating around $0$ and 
a periodic-to-point connection $\phi_0(t)<1,  \ t \in \R,$ such that, for some $a \in (0, \beta_1)\cap (0,1)$ and $C>0$, it holds  
$$
|\phi_0(t)-p(t)| \leq Ce^{2at}, \ t \leq 0, \quad \phi_0(+\infty) =1. 
$$
\end{lem}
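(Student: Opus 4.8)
The plan is to establish Lemma \ref{L17} in three independent stages: first the existence and hyperbolicity of a slowly oscillating periodic solution $p(t)$ of the Wright-type equation (\ref{wri}) for $\tau$ slightly above $3\pi/2$; second, the existence of a periodic-to-point connection $\phi_0$ running from the periodic orbit $\{p_t\}$ up to the steady state $1$; and third, the sharp exponential rate of approach $|\phi_0(t)-p(t)|\le Ce^{2at}$ as $t\to-\infty$. I will treat these in order, since each relies on the previous one being in place.

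\emph{Existence and hyperbolicity of $p$.} At $\tau=3\pi/2$ the characteristic equation $\chi_1(z)=z-e^{-\tau z}=0$ at the equilibrium $0$ (note that the linearization of (\ref{wri}) at $0$ is $z'(t)=z(t-\tau)$) has a pair of simple roots $z_{2}(\tau),z_3(\tau)$ crossing the imaginary axis transversally with $\Re z_j'(\tau)|_{\tau=3\pi/2}>0$, as recorded in Lemma \ref{hub}. Hence a local Hopf bifurcation theorem for delay equations (Hale--Verduyn Lunel, \cite{hale}) yields, for $\tau\in(3\pi/2,\tau_0)$ with $\tau_0>3\pi/2$ suitably small, a branch of nonconstant periodic solutions $p=p(t;\tau)$ of (\ref{wri}) with period $\omega(\tau)$ close to $2\pi/\Im z_2(3\pi/2)$ and amplitude $\to 0$ as $\tau\to 3\pi/2^+$. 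Because the amplitude is small, $p(t)<1$ for all $t$, and because the crossing eigenvalues are the two of smallest real part among those in $\{\Re z\ge 0\}$ (again Lemma \ref{hub}) and the branch is nondegenerate, the bifurcated orbit is hyperbolic (orbitally asymptotically stable with asymptotic phase on the branch side where $\tau>3\pi/2$); concretely, the monodromy operator of (\ref{lp}) has $1$ as a simple multiplier with $G_\R(1)=\{cp':c\in\R\}$ and all other multipliers strictly inside the unit circle, so the shifted exponential dichotomy on $\R_-$ has a one-dimensional unstable space and exponents $\alpha_1=0<\beta_1$. Slow oscillation of $p$ around $0$ follows from the Mallet-Paret--Sell theory \cite{mps,mps2} for monotone cyclic feedback systems: since (\ref{wri}) has the form $z'=f(z(t),z(t-\tau))$ with $\partial f/\partial z(t-\tau)=1-p(t)>0$ near the small orbit, periodic solutions are either slowly or rapidly oscillating according to the position of the critical eigenvalues, and the Hopf orbit born from $z_2(3\pi/2)$ — the first crossing — is slowly oscillating by the discrete Lyapunov functional count.

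\emph{The periodic-to-point connection.} Here I would invoke the Krisztin--Walther--Wu description \cite{KWW} of the global attractor for delayed monotone positive feedback. The shifted equation $z=1-y$ transforms (\ref{wri}) into $z'(t)=-z(t-\tau)(1-z(t))\cdot(\text{sign adjustment})$; more usefully, one works directly with $y$ on the interval of interest. For a nontrivial initial segment $a\in C([-\tau,0],[0,1])$ bounded below by the periodic orbit, the solution is monotone and converges to $1$; combining this with the one-dimensionality of the unstable manifold $W^u$ of the periodic orbit $\{p_t\}$ inside the (monotone) attractor, $W^u$ is a one-dimensional embedded disc whose non-periodic-orbit trajectories must, by the spindle structure of \cite{KWW}, connect to the only other equilibrium available, namely $y\equiv 1$. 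Pick $\phi_0$ to be (a time-translate of) such a connecting trajectory; then $\phi_0(t)<1$ for all $t$ (same linear-ODE uniqueness argument as used after (\ref{pfel}) for the $y(t)<1$ redundancy), and $\phi_0(+\infty)=1$.

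\emph{The exponential rate at $-\infty$.} Since $\phi_0$ lies on the one-dimensional unstable manifold $W^u$ of the hyperbolic orbit $\{p_t\}$, the segment $\phi_{0,t}-p_t$ decays to $0$ in $C[-\tau,0]$ as $t\to-\infty$ along $W^u$, and the rate is governed by the unstable Floquet exponent $\beta_1>0$ of (\ref{lp}): generically $|\phi_{0,t}-p_t|\sim C_1 e^{\beta_1 t}$, but in any case $|\phi_0(t)-p(t)|\le C e^{a' t}$ for every $a'<\beta_1$. Choosing $a\in(0,\beta_1/2)\cap(0,1/2)$ (shrinking $\tau_0$ toward $3\pi/2$ if needed so that $\beta_1$ stays bounded away from $0$ — the Hopf orbit's $\beta_1$ depends continuously on $\tau$) gives $|\phi_0(t)-p(t)|\le Ce^{2at}$ with $a\in(0,\beta_1)\cap(0,1)$, as claimed. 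The linearization-vs-nonlinear gap is controlled by the roughness/attractivity of the exponential dichotomy exactly as in \cite[Lemma 4.3]{HL} and the invariant-manifold estimates of \cite{KWW,mps2}.

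\emph{Main obstacle.} The delicate point is not the Hopf bifurcation itself but verifying that the bifurcated orbit is \emph{genuinely hyperbolic} with a one-dimensional unstable manifold \emph{and} slowly oscillating, uniformly for $\tau$ in a one-sided neighborhood of $3\pi/2$ — i.e. ruling out secondary bifurcations and controlling $\beta_1$ away from $0$. This requires combining the transversality in Lemma \ref{hub}, the Mallet-Paret--Sell Lyapunov-functional/Floquet analysis \cite{mps,mps2}, and the Krisztin--Walther--Wu attractor geometry \cite{KWW} to guarantee that the unstable manifold of $\{p_t\}$ is exactly one-dimensional and lands on the equilibrium $1$ and on no other invariant set. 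Everything else (the redundancy of $y<1$, monotone convergence of Cauchy solutions, the exponential-rate bookkeeping) is routine by the techniques already used earlier in the paper.
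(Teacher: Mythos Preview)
Your argument contains a genuine error in the description of the Floquet spectrum of $p$. You claim that the monodromy of (\ref{lp}) has $1$ as a simple multiplier and ``all other multipliers strictly inside the unit circle'', making the orbit orbitally asymptotically stable. This is false, and it contradicts your own subsequent assertion of a one-dimensional strongly unstable space with $\beta_1>0$. The point is that the equilibrium $0$ of (\ref{wri}) already carries the real eigenvalue $z_1(\tau)\in(0,1)$ \emph{before} the pair $z_2,z_3$ crosses at $\tau=3\pi/2$; the Hopf bifurcation takes place on a two-dimensional centre manifold that is itself normally unstable. Hence the bifurcated orbit inherits exactly one Floquet multiplier of modulus $>1$ (this is precisely what \cite[Theorem 8.2]{KWW} yields after the change of variables below), and the heteroclinic $\phi_0$ lies on the \emph{strongly unstable} manifold of $\{p_t\}$, not on a centre--stable leaf. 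Without that genuinely unstable multiplier there is no mechanism producing separation at rate $e^{\beta_1 t}$ as $t\to-\infty$, and your third step collapses.

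The paper's route differs from your sketch in two further respects. First, equation (\ref{wri}) is not in the standard positive-feedback form $z'=F(z(t-\tau))$, $F'>0$, to which the KWW theory applies; the paper uses the transformation $1-y=e^{z}$ (not the linear $z=1-y$ you attempt) to obtain $z'(t)=e^{z(t-\tau)}-1$, and then \cite[Theorems 8.2 and 17.3]{KWW} deliver simultaneously the slowly oscillating periodic solution $q$, the Floquet information (exactly one multiplier outside the closed disc, $\dim G_{\R}(1)\in\{1,2\}$), and the connecting orbit $Q$ with $Q(+\infty)=-\infty$. Second, excluding $\dim G_{\R}(1)=2$ is not a consequence of an unproved assertion that ``the branch is nondegenerate'': one must actually verify supercriticality, which the paper imports from the explicit normal-form computation \cite[Example 3.24]{FM}; this gives slowly oscillating solutions converging exponentially to $q$ at $+\infty$, and \cite[Corollary 8.4(iv)]{KWW} states that such solutions cannot exist when $\dim G_{\R}(1)=2$. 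Your appeal to \cite{mps,mps2} is pertinent to the sinusoidal shape used later in Section \ref{S6}, but it does not by itself establish the hyperbolicity of $p$.
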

\begin{proof} The change of variables $1-y(t) = e^{z(t)}$ transforms (\ref{wri}) and the boundary restrictions on $\phi_0$ into the following equation:
$$
z'(t)= F(z(t-\tau)),\quad  F(z): = e^{z}-1, \quad z(t) \ \mbox{is asymptotically periodic at} \ -\infty,  \ z(+\infty) = -\infty. 
$$
Since function $F:\R\to \R$ is bounded from below, $F(0)=0,\ F'(z)> 0$ for all $z\in \R$, we can say that the equation $z'(t)= F(z(t-\tau))$ possesses  delayed positive feedback. 
For $\tau \in  (3\pi/2, 7\pi/2)$, this type of equations was thoroughly analysed in the monograph \cite{KWW} where it was proved that the equation $z'(t)= F(z(t-\tau))$ (i) has a periodic solution  $q(t)$ slowly oscillating around $0$ \cite[Corollary 5.8 and Theorem 17.3]{KWW}; (ii) has a solution $Q(t)$ such that $Q(t)-q(t) \to 0$ at $-\infty$ and 
$Q(t) \to -\infty$ as $t\to +\infty$ \cite[Theorem 17.3]{KWW}. Next, the solution $q(t)$ is the unique non-trivial periodic solution belonging to the closure of the unstable manifold of the equilibrium $z(t)\equiv 0$ in the phase space $C[-\tau,0]$ \cite[Theorem 17.3]{KWW}.  The stability properties of $q(t)$ were analysed in Chapter 8 of  \cite{KWW}. It was proved that the associated Floquet map has exactly one Floquet multiplier (of multiplicity $1$) outside the unit disc $\{z: |z| \leq 1\}$ \cite[Theorem 8.2]{KWW}. Moreover, the only Floquet multiplier  on the unit circle $\{z: |z| = 1\}$ is $1$ while the realified generalised eigenspace $G_\R(1)$ of $1$ is either one-dimensional or two-dimensional  \cite[Corollary 8.4]{KWW}. In the case, when $G_\R(1)$ is two-dimensional,  the equation $z'(t)= F(z(t-\tau))$ cannot have slowly oscillating solutions exponentially converging to $q(t)$ at $+\infty$, see  \cite[Corollary 8.4 (iv)]{KWW}  and   the proof of  Theorem 8.2 in \cite{KWW} for more details. We are going to use the latter information in order to show that 
dim $G_\R(1)=1$ when $\tau_0 >3\pi/2$ is sufficiently close to $3\pi/2$.  Indeed,  for such $\tau_0$ that  $\tau_0-3\pi/2 >0$ is small, equation (\ref{wri}) was analysed in \cite[Section 3]{FM} by means of the normal form approach. In particular, it was proved that when the parameter $\tau$ increases and passes through the point $\tau_1=3\pi/2$,  equation (\ref{wri}) undergoes a super-critical generic Hopf bifurcation from the zero equilibrium, with associated periodic solution $p(t)$ being exponentially stable with asymptotic phase in the center manifold of the trivial equilibrium, see  \cite[Example 3.24]{FM}.  Moreover, it was established that $p(t)$ oscillates slowly around $0$, in fact, 
\begin{equation}\label{sinel}
p(t) = \sqrt{\frac{20(\tau-3\pi/2)}{9\pi/2+1}}\cos\left((1+O(\sqrt{\tau-3\pi/2}))t)\right) +O({\tau-3\pi/2}). 
\end{equation}
Since the change of variables $1-y(t) = e^{z(t)}$ preserves all the above mentioned stability and oscillation properties of the periodic solution $p(t)$ and the zero steady state, we may conclude that  $1-p(t) = e^{q(t+t_q)}$ for some $t_q \in \R$ and that the unstable manifold of the trivial equilibrium to $z'(t)= F(z(t-\tau))$ contains  slowly 
oscillating solutions exponentially converging to $q(t)$. As we have already mentioned this behaviour is not possible when dim $G_\R(1)=2$. 
 Thus dim $G_\R(1)=1$ for all $\tau >3\pi/2$ sufficiently close to $3\pi/2$.   This means that $q(t)$ is a hyperbolic periodic solution of equation  $z'(t)= F(z(t-\tau))$. In particular, $Q(t)-q(t) \to 0$ exponentially as $t \to-\infty$, see \cite[Appendices I and V]{KWW}. 
Now, since the linear monodromy maps  associated with the solutions $q(t)$ and $p(t)$ are conjugate via an invertible multiplication operator, we conclude that $p(t)$ is a hyperbolic periodic solution of (\ref{wri}), too. It is clear then that $\phi_0(t) = 1- e^{Q(t)}$ is a heteroclinic connection possessing all properties mentioned in the statement of the lemma (the inequalities $\phi_0(t) <1, \ p(t) <1$ were already established  in the proof of Theorem \ref{Te2r}, in the paragraph below  formula (\ref{pfel})). 
\hfill $\square$
\end{proof}
Set now
$$(Lw)(t)=(1+\phi_0(t-\tau))w(t) - (1-\phi_0(t)) w(t-\tau), \q (Jw)(t)=w(t)-\int^{+\infty}_te^{t-s} (Lw)(s)ds.$$
The next stage of the proof concerns the solvability of the linear  inhomogeneous equations $
(Jw)(t)  = g(t)$ and 
\begin{equation}\label{deQ}
 w'(t)-w(t) =  -(Lw)(t) +g(t)
\end{equation}
in the space 
$$
X^a= \left\{g \in C_b(\R,\R): g(+\infty)= 0 \ \mbox{and there exists an $\omega-$periodic}\  \right. $$ 
$$\left. \mbox{ function} \ g_\infty(t)  \ \mbox{such that}\ \lim_{t\to -\infty}|g(t)-g_\infty(t)|e^{-at} =0 \right\},
$$
equipped with the complete norm
$$
|g|_a = \sup_{t \in \R}|g(t)| + \sup_{t \leq 0}|g(t)-g_\infty(t)|e^{-at}. 
$$
Here $a \in (0, \min\{1, \beta_1\})$ is chosen as in Lemma \ref{L17} and $g\to g_\infty$ is a linear operator transforming  function $g$,  asymptotically periodic at $-\infty$, into its periodic limit $g_\infty$ (i.e. $\lim_{t \to - \infty} |g(t)-g_\infty(t)| =0$, $g_\infty(t)=g_\infty(t+\omega), \ t \in \R$). In particular, we have $p(t)=(\phi_0)_\infty(t)$. 
We also notice that $1-\phi_0, \phi'_0, \phi''_0 \in X^a$ in view of  Lemma \ref{L17} and (\ref{wri}), however, $\phi_0 \not \in X^a$.  
\begin{remark} \label{R3} It is worth noticing that the definition of the Banach space $X^a$ given in \cite{HW} uses the restriction  
$\sup_{t\leq 0}|g(t)-g_\infty(t)|e^{-at} <\infty$ instead of\  \ $\lim_{t\to -\infty}|g(t)-g_\infty(t)|e^{-at} =0$. The advantage 
of our definition of $X^a$ is that the translation operator $T: \R\times X^a \to X^a$ defined by $(T_hw)(t)=w(t+h)$   is a continuous  function of $h, w$.  Indeed, set  $\Omega(h,w) =\sup_{t\in \R}|w(t+h)-w(t)|$, then 
$$
|w(t+h)|_a \leq 3e^{a|h|}|w|_a, \quad |w(t+h)-w(t)|_a \leq \Omega_a(h,w), 
$$ 
where 
$
\Omega_a(h,w) := \Omega(h,w)+e^{ah}\Omega(h,(w(\cdot)-w_\infty(\cdot))e^{-a\cdot})+ |e^{ah}-1||w|_a$ and 
$\Omega_a(0^+,w)=0. 
$
Thus $|T_{h_1}w_1-T_{h_0}w_0|_a \leq 3e^{a(|h_1|+|h_0|)}\left\{|w_1-w_0|_a+\Omega_a(h_1-h_0,w_0)\right\}.$ 
\end{remark}
\begin{lem} \label{L18}Suppose that $g \in X^a$. Then equation (\ref{deQ}) has a solution $w\in X^a$ if and only if 
$<g_\infty, p_*>:= \int_0^\omega g_\infty(s)p_*(s)ds=0$. 
\end{lem}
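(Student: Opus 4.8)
\emph{Plan.} I would read (\ref{deQ}) as the equation $Tw=g$ for the linear operator $(Tw)(t)=w'(t)-w(t)+(Lw)(t)$, regarded as a bounded map from the space of $C^1$ functions in $X^a$ to $X^a$. Its homogeneous version $Tw=0$ is exactly the equation obtained by linearising (\ref{wri}) along $\phi_0$, namely $w'(t)=-\phi_0(t-\tau)w(t)+(1-\phi_0(t))w(t-\tau)$, which is asymptotically autonomous at $+\infty$ with limiting equation $w'=-w$ (exponentially stable), and asymptotically $\omega$-periodic at $-\infty$ with limiting equation (\ref{lp}), hyperbolic by Lemma \ref{L17}. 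Hence, exactly as in Section \ref{S5} (cf. \cite[Lemmas 4.3 and 4.6]{HL}), $Tw=0$ has an exponential dichotomy on $[T,+\infty)$ and a shifted exponential dichotomy on $(-\infty,-T]$ with exponents $0<\beta_1$, and $T$ is a Fredholm operator. Differentiating (\ref{wri}) along $\phi_0$ gives $T\phi_0'=0$, and by Lemma \ref{L17} one has $\phi_0'\in X^a$ with periodic limit $(\phi_0')_\infty=p'\not\equiv0$.

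\emph{Necessity.} If $w\in X^a$ solves (\ref{deQ}), I would translate: the functions $w(\cdot+n\omega)$ and their derivatives converge, uniformly on compact sets, to $w_\infty$ and $w_\infty'$ (because $\phi_0(\cdot+n\omega)\to p$ and $g(\cdot+n\omega)\to g_\infty$ locally uniformly), so passing to the limit in (\ref{deQ}) shows that the $\omega$-periodic $w_\infty$ solves (\ref{lp}) with the added forcing term $g_\infty$. The classical Fredholm alternative for $\omega$-periodic linear functional differential equations (see \cite[Ch.~8]{hale}, \cite{HL}, \cite{mps}) then forces $g_\infty$ to be $L^2([0,\omega])$-orthogonal to the $\omega$-periodic solutions of the formal adjoint of (\ref{lp}), i.e. $\langle g_\infty,p_*\rangle=0$.

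\emph{Sufficiency.} Assume $\langle g_\infty,p_*\rangle=0$. First I would solve the limiting periodic problem: the same Fredholm alternative provides an $\omega$-periodic solution $w_\infty$ of (\ref{lp}) with forcing $g_\infty$; fix one. Pick $\chi\in C^1(\R)$ with $\chi\equiv1$ on $(-\infty,-1]$ and $\chi\equiv0$ on $[0,+\infty)$, set $\widetilde w:=\chi w_\infty$ (a $C^1$ element of $X^a$, with periodic limit $w_\infty$ at $-\infty$ and vanishing near $+\infty$), and compute the residual $R:=T\widetilde w-g$. Using $|\phi_0-p|\le Ce^{2at}$ on $\R_-$ (Lemma \ref{L17}), $\phi_0',\phi_0''\in X^a$, the strict inequality $2a>a$, and $g-g_\infty=o(e^{at})$ as $t\to-\infty$, one gets $R(t)=o(e^{at})$ at $-\infty$; for $t$ large $\widetilde w\equiv0$ and $R=-g\to0$; in between $R$ is continuous and bounded. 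Thus $R\in X^a$ with $R_\infty=0$, so it suffices to solve $Tu=-R$ in $X^a$ and set $w:=\widetilde w+u$.

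\emph{The last step and the main obstacle.} What remains is to show that every $h\in X^a$ with $h_\infty=0$ lies in the range of $T|_{X^a}$, and this is the delicate part. I would first compute $\mathrm{Ind}(T|_{X^a})=1$: the unstable projection of $Tw=0$ at $+\infty$ has rank $0$, while, because $0<a<\min\{1,\beta_1\}$, its $X^a$-weighted unstable projection at $-\infty$ — corresponding to the Floquet exponents of (\ref{lp}) exceeding $a$, i.e. only the simple exponent $\beta_1$ — has rank $1$. Next I would show $\ker(T|_{X^a})$ is exactly two-dimensional: any element has a periodic limit solving (\ref{lp}), hence a multiple of $p'$, and subtracting the corresponding multiple of $\phi_0'$ leaves a solution of the variational equation decaying like $e^{\beta_1 t}$ at $-\infty$ (which decays automatically at $+\infty$, where the limiting equation is $w'=-w$); so $\ker(T|_{X^a})$ is spanned by $\phi_0'$ and one such exponentially decaying solution. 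Therefore $\mathrm{codim}\,\mathrm{Im}(T|_{X^a})=\dim\ker-\mathrm{Ind}=1$. Since the bounded functional $g\mapsto\langle g_\infty,p_*\rangle$ on $X^a$ is nontrivial and, by the necessity part, vanishes on $\mathrm{Im}(T|_{X^a})$, I conclude $\mathrm{Im}(T|_{X^a})=\{g\in X^a:\langle g_\infty,p_*\rangle=0\}$; in particular $Tu=-R$ is solvable and the proof is complete. The hard points are transporting the Hale--Lin dichotomy and Fredholm bookkeeping to the mixed-weight space $X^a$ and verifying the exact kernel dimension; the remaining steps follow the scheme of \cite{Hale}, \cite{HW}.
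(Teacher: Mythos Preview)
Your necessity argument and the reduction (solve the periodic problem, subtract a cutoff, leave a residual $R$ with $R_\infty=0$) match the paper's proof exactly. The divergence is entirely in your ``last step,'' and there the paper proceeds quite differently.

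You try to run the Hale--Lin Fredholm bookkeeping directly in $X^a$: compute $\mathrm{Ind}(T|_{X^a})=1$ from the Floquet exponents exceeding $a$, show $\dim\ker=2$, conclude $\mathrm{codim\,Im}=1$, and identify the range with the kernel of $g\mapsto\langle g_\infty,p_*\rangle$. The trouble is that the index formula of \cite[Lemma~4.6]{HL} is proved for the weighted spaces $C^0(\mu_2,\mu_1)$, and $X^a$ is not of this type: at $-\infty$ it allows an asymptotically periodic component (the center direction) plus an $o(e^{at})$ correction, a mixed structure not covered by the $C_{\frak m}$ theory. Your sentence ``its $X^a$-weighted unstable projection at $-\infty$\dots has rank~1'' is exactly the step that needs justification and that you yourself flag as a ``hard point''; without it, neither the Fredholm property of $T|_{X^a}$ nor the index value is established, and the range identification does not follow. (Your kernel computation, incidentally, is correct and agrees with the paper's remark that $\dim\ker J\le 2$; but that alone does not give $\mathrm{codim\,Im}=1$.)

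The paper avoids this altogether. After the same cutoff, the residual $g_1$ lies in the ordinary weighted space $C_{\frak m}$ with $\frak m=(0,a)$, where Hale--Lin applies verbatim. The key observation is that the solvability obstruction there, namely the space $\frak O$ of adjoint solutions with the prescribed decay, is \emph{trivial}: any nonzero $y_o\in\frak O$ would, after normalisation and translation to $+\infty$, produce a nontrivial bounded solution of $z'=z$, which is impossible. Hence $Tv=g_1$ is solvable in $C_{\frak m}$ for \emph{every} such $g_1$ --- the single orthogonality condition $\langle g_\infty,p_*\rangle=0$ was already consumed at the periodic level. The paper then closes by verifying directly that the solution $v$ has $v(+\infty)=0$ (easy, since the limiting equation at $+\infty$ is $v'=-v+o(1)$) and $v(t)e^{-at}\to 0$ at $-\infty$ (a second compactness argument using that the $e^{-at}$-shifted periodic equation has no Floquet multipliers on the unit circle). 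Both of these last verifications are also absent from your outline. The upshot: your strategy is plausible but leaves the central difficulty unresolved, whereas the paper sidesteps it by retreating to the standard $C_{\frak m}$ setting and paying with two short compactness arguments.
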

\begin{proof} First, we recall that the $\omega-$periodic hyperbolic inhomogeneous equation 
\begin{equation}\label{lim}
 w'(t)= -p(t-\tau)w(t) + (1-p(t)) w(t-\tau) + g_\infty(t),
\end{equation}
has an $\omega-$periodic solution $w_g$ if and only if $<g_\infty, p_*>=0$, 
see e.g. 
 \cite[p. 1236]{HW}. 
 
Suppose now that equation (\ref{deQ}) has a solution $w\in X^a$. After taking limit at $-\infty$ in an equivalent integral form of (\ref{deQ}) with $g \in X^a$, we find that $w_\infty(t)$ is an $\omega-$periodic solution of  (\ref{lim}). Hence, $<g_\infty, p_*>=0$.

Next,  suppose  $<g_\infty, p_*>=0$. Then equation (\ref{lim}) has an $\omega-$periodic solution $w_\infty(t)$. 
Let $w_0(t)$ be a smooth function such that $w_0(t)=w_\infty(t)$ for all $t \leq 0$ and $w_0(t)=0$ for all $t \geq \omega$.  Clearly, the function $w(t) =w_0(t) +v(t)$ is a solution of  (\ref{deQ}) if and only if $v(t)$ is a solution of 
equation 
\begin{equation}\label{eqv}
v'(t)= -\phi_0(t-\tau)v(t) + (1-\phi_0(t))v(t-\tau) + g_1(t)
\end{equation}
where  
$
g_1(t) = g(t) +[-w_0'(t)  -\phi_0(t-\tau)w_0(t) + (1-\phi_0(t))w_0(t-\tau)]. $
Observe that, for all $t \geq \omega + \tau$, we have that $g_1(t)=g(t)$, while,  for all $t \leq 0$,  
$$
g_1(t)
= g(t)-g_\infty(t) -[\phi_0(t-\tau)-p(t-\tau)]w_0(t) -[\phi_0(t)-p(t)]w_0(t-\tau).
$$
In particular, $g_1(+\infty)=0$ and $\sup_{t \leq 0}|g_1(t)|e^{-at} < \infty$. Consequently, the sufficiency of the condition $<g_\infty, p_*>=0$ for the solvability  of equation  (\ref{deQ}) with $g \in X^a$ will be established if we prove that for each $g_1\in C_\frak{m}$, $\frak{m} = (0, a)$, $g_1(+\infty)=0$, equation (\ref{eqv}) has a solution $v\in 
C_\frak{m}$ such that  $v(+\infty)=0$. To this end, we will  use  results (as well as notation, see $\alpha_j, \beta_j, \gamma_j$ below) from the Hale-Lin work \cite{HL}. By the roughness Lemma 4.3 in \cite{HL}, there exist a small $\varepsilon >0$ and large $T_*>0$ such that 
the homogeneous part of equation  (\ref{eqv}) has a shifted dichotomy on $(-\infty, -T_*]$ with exponents 
$\alpha_1=\varepsilon < a:=\gamma_1 < \beta_1-\varepsilon$  and it is exponentially stable on $[T_*,+\infty)$ (more formally, it has a shifted dichotomy on $[T_*,+\infty)$ with exponents 
$\alpha_2=-1+\varepsilon   < 0=: \gamma_2 < \beta_2:=+\infty$), Lemmas 4.5 and 4.6 from \cite{HL} assure 
that equation  (\ref{eqv}) has a solution  $v\in C^1_\frak{m}$ for each $g_1\in 
C_\frak{m}$  satisfying the orthogonality condition 
$$
\int_{-\infty}^{+\infty}g_1(t)y_o(t)dt =0 \quad \mbox{for all} \ y_o \in \frak{O},
$$
where $\frak{O}$ denotes the set of the solutions $y_o(t)$  of the formal adjoint equation  to (\ref{eqv}) 
\begin{equation}\label{fae}
y'(t)= \phi_0(t-\tau)y(t) - (1-\phi_0(t+\tau))y(t+\tau) 
\end{equation}
such that $|y_o(t)| \leq Ke^{-\beta t},\ t \geq 0$, $|y_o(t)| \leq Ke^{-\varepsilon t},\ t \leq 0$, with some positive $K, \beta\leq \beta_2$. 

We claim that $\frak{O}=\{0\}$ and therefore the  above orthogonality condition is automatically satisfied. Indeed, suppose that  $y_o\in \frak{O}\setminus\{0\}$. 
Since $y_o(+\infty)=0$,  there exists an increasing sequence $t_j \to +\infty$ such that 
$|y_o(t_j)| = \max_{t \geq t_j}|y_o(t)|>0$. Then each function $z_j(t)=y_o(t+t_j)/|y_o(t_j)|, t \geq 0,$ is uniformly bounded  by $1$  on $\R_+$ and also satisfies the equation
$$
z'(t) = \phi_0(t-\tau+t_j)z(t) - (1-\phi_0(t+\tau+t_j))z(t+\tau), \ j \in \N. 
$$
 In particular, $|z_j'(t)| \leq 3\sup_{s \in \R}|\phi_0(s)|$ for $t \geq 0, \ j \in \N$, that implies that $z_j(t)$ has a subsequence $z_{j_k}(t)$ uniformly converging on compact subsets of $\R_+$ to some nontrivial bounded solution $z_*(t),\ |z_*(0)|=1,$ of the limit equation (at $+\infty$) $z'(t) = z(t)$. Obviously, since $\max_{s\geq 0}|z_*(s)|=1,$ this cannot happen and therefore $\frak{O}=\{0\}$. 
 
Hence, equation  (\ref{eqv}) has a solution  $v\in C^1_\frak{m}$ for each $g_1\in 
C_\frak{m}$.  In this way, the lemma will be proved if we show that $v(+\infty)=0$ and $v(t)e^{-at} \to 0$ as $t \to -\infty$.  
The property $v(+\infty)=0$ becomes evident if we observe that $v'(t)=-v(t) +g_2(t)$, where $g_2(t) = (1-\phi_0(t-\tau))v(t) + (1-\phi_0(t))v(t-\tau) + g_1(t)$ satisfies $g_2(+\infty)=0$. Indeed, we have 
$$
|v(t)| =\left|v(s)e^{-(t-s)}+\int_s^te^{-(t-u)}g_2(u)du\right| \leq |v(s)|e^{-(t-s)} + \sup_{u\geq s}|g_2(u)|, \quad t \geq s, 
$$
so that $\limsup_{t\to +\infty}|v(t)| \leq  \lim_{s \to +\infty}\sup_{u\geq s}|g_2(u)|=0$.  

Finally, suppose that $\limsup_{t\to -\infty}|v(t)|e^{-at} >0$. Then, after realising the change of variables 
$v(t)= \psi(t)e^{at}$, we find that $\limsup_{t\to -\infty}|\psi(t)| >0$ and 
$$
\psi'(t) = -(a+p(t-\tau))\psi(t)+ (1-p(t))e^{-a\tau}\psi(t-\tau) +g_3(t), 
$$
where 
$$
g_3(t)= e^{-at}\left(g_1(t) - (\phi_0(t-\tau)-p(t-\tau))v(t) + (p(t)- \phi_0(t))v(t-\tau)\right), \quad g_3(-\infty)=0. 
$$
It is easy to check that the Floquet multiplicators of the homogeneous  equation 
\begin{equation}\label{sz}
z'(t) = -(a+p(t-\tau))z(t)+ (1-p(t))e^{-a\tau}z(t-\tau)
\end{equation}
can be obtained from the Floquet multiplicators of (\ref{lp}) after multiplying them by $e^{-a\omega}$. 
Thus equation (\ref{sz}) is exponentially dichotomic (i.e. it does not have multiplicators on the unit circle). In particular,   it does not possess nontrivial bounded solutions. On the other hand, since $\psi(t), \psi'(t)$ are 
bounded functions and $g_3(-\infty)=0$, we can find a sequence  $t_j \to -\infty$ such that $\psi(t+t_j)$ converges, uniformly on compact subsets of $\R$, to a bounded nontrivial solution of  (\ref{sz}).  The obtained  contradiction shows that actually $\psi(-\infty)=0$. 
\hfill $\square$
\end{proof}
\begin{cor}\label{cort5}
Suppose that $g \in X^a$. Then equation $Jw=g$ has a solution $w\in X^a$ if and only if 
$\int_0^\omega g_\infty(s)(p_*'(s)+p_*(s))ds=0$. 
\end{cor}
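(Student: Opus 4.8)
The plan is to reduce $Jw=g$ to equation (\ref{deQ}) by an explicit substitution, apply Lemma \ref{L18}, and then rewrite the resulting orthogonality condition with the help of the formal adjoint equation for $p_*$.

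First I would show that, for $w,g\in X^a$, the relation $Jw=g$ is \emph{equivalent} to the assertion that $v:=w-g$ belongs to $X^a$, is $C^1$, and solves
$$
v'(t)-v(t)=-(Lv)(t)-(Lg)(t),\qquad t\in\R .
$$
Indeed, if $Jw=g$ then $v(t)=w(t)-g(t)=\int_t^{+\infty}e^{t-s}(Lw)(s)\,ds$ is $C^1$, and differentiating gives $v'-v=-Lw=-Lv-Lg$ by linearity of $L$; conversely, if such $v$ solves the displayed equation, then with $w:=v+g$ one has $v'-v=-L(v+g)=-Lw$, and multiplying by $e^{-t}$, integrating over $[t,+\infty)$ and using the boundedness of $v$ recovers $v(t)=\int_t^{+\infty}e^{t-s}(Lw)(s)\,ds$, i.e. $Jw=g$. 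At this step one also records the elementary fact that $L$ maps $X^a$ into itself, with $(Lg)_\infty=L_\infty g_\infty$, where $(L_\infty v)(t):=(1+p(t-\tau))v(t)-(1-p(t))v(t-\tau)$; this uses $\phi_0(+\infty)=1$ together with the bound $|\phi_0(t)-p(t)|\le Ce^{2at}$ from Lemma \ref{L17} and the inequality $2a>a>0$.

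The displayed equation is exactly (\ref{deQ}) with right-hand side $-Lg\in X^a$, so Lemma \ref{L18} yields a solution $v\in X^a$ if and only if $<(-Lg)_\infty,p_*>=-<L_\infty g_\infty,p_*>=0$. It then remains to compute $<L_\infty g_\infty,p_*>=\int_0^\omega(L_\infty g_\infty)(t)p_*(t)\,dt$. Writing out $L_\infty$ and shifting the delayed term by $t\mapsto t+\tau$ (legitimate since $p$, $g_\infty$, $p_*$ are $\omega$-periodic), one obtains
$$
<L_\infty g_\infty,p_*>=\int_0^\omega g_\infty(t)\bigl[(1+p(t-\tau))p_*(t)-(1-p(t+\tau))p_*(t+\tau)\bigr]\,dt .
$$
Now I would substitute the formal adjoint equation $p_*'(t)=p(t-\tau)p_*(t)-(1-p(t+\tau))p_*(t+\tau)$ satisfied by $p_*$, i.e. $(1-p(t+\tau))p_*(t+\tau)=p(t-\tau)p_*(t)-p_*'(t)$; the bracket collapses to $p_*(t)+p_*'(t)$, whence $<L_\infty g_\infty,p_*>=\int_0^\omega g_\infty(t)(p_*'(t)+p_*(t))\,dt$. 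Combining this with the criterion above proves the corollary.

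The only point that I expect to require any care is the first reduction step: correctly matching the integral operator $J$ acting on $X^a$ (rather than on a $C^1$-space) with the differential equation (\ref{deQ}), and verifying that $L$ preserves $X^a$ and commutes with the passage to periodic limits. Once this is in place the rest is the short, purely algebraic manipulation using the adjoint equation, and no regularity of $g$ beyond membership in $X^a$ is needed.
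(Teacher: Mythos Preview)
Your proof is correct and follows essentially the same route as the paper's: reduce $Jw=g$ to equation (\ref{deQ}) for $v=w-g$ with right-hand side $-Lg$, invoke Lemma \ref{L18}, and then evaluate $\langle L_\infty g_\infty,p_*\rangle$ via the shift $t\mapsto t+\tau$ and the formal adjoint equation for $p_*$. The paper organises the reduction by first rewriting $Jw=g$ as $Ju=g_4$ with $g_4(t)=\int_t^{+\infty}e^{t-s}(Lg)(s)\,ds$ and then differentiating, whereas you go directly from $Jw=g$ to the differential equation for $v$; this is a cosmetic difference and both arrive at the same equation with the same inhomogeneity $-Lg$.
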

\begin{proof} Note that there exists a solution $w_*\in X^a$ of $Jw_*=g$ if and only if the equation 
$(Ju)(t) =  g_4(t)$ with 
$$
g_4(t)= \int^{+\infty}_te^{t-s} (Lg)(s)ds \in X^a, 
$$
has a solution $u_* = w_*- g \in X^a$.   Now, it is easy to see that 
$$
u'(t)  +\phi_0(t-\tau)u(t) - (1-\phi_0(t)) u(t-\tau) = -g_4(t)+g_4'(t), 
$$
where 
$$
-g_4(t)+g_4'(t)= - (1 + \phi_0(t-\tau))g(t) +(1-\phi_0(t)) g(t-\tau) \in X^a. 
$$
Applying Lemma \ref{L18}, we obtain the following solvability criterion for $Jw=g$: 
$$
0= <-g_{4,\infty}+g'_{4,\infty}, p_*>=  \int_0^\omega \left[-(1 + p(s-\tau))g_\infty(s) + (1-p(s)) g_\infty (s-\tau)\right] p_*(s)ds= 
$$
$$
 \int_0^\omega \left[-p_*(s) - p(s-\tau)p_*(s) + (1-p(s+\tau))p_*(s+\tau)\right]  g_\infty (s) ds=  \int_0^\omega \left[-p_*(s) - p'_*(s) \right]  g_\infty (s) ds. 
$$
This completes the proof  of Corollary \ref{cort5}. \hfill \qed
\end{proof}
\begin{remark} \label{R4} Lemma \ref{L18} and Corollary \ref{cort5} are analogous to Theorems 3.1 and 3.5 in \cite{HW}. Due to the use of the Hale-Lin theory  \cite{HL}, our 
proof of these results is  shorter than in \cite{HW}. 
\end{remark}
As we have mentioned, 
semi-wavefront solutions of  (\ref{pfe})  will be
obtained as perturbations of  the oscillating connection $\phi_0(t)$ of
(\ref{wri}).  Since these semi-wavefronts may converge,  as  $t \to -\infty$, to the periodic solutions 
with periods $\tilde \omega$ slightly different from the period $\omega$ of $p(t)$, it is convenient to 
introduce a new small parameter $\gamma$ measuring the difference between $\tilde \omega$ and $\omega$.  We will incorporate $\gamma$ through the change of variables  $Z(t) = y((1+\gamma)t)$, 
where $\gamma \in [-\gamma_*, \gamma_*]$ for some small $\gamma_*>0$. 
After setting $\epsilon_\gamma = \epsilon/(1+\gamma)$ and $\tau_\gamma = \tau/(1+\gamma)$, we obtain from (\ref{pfe}) that 
\begin{equation}\label{pfede}
\epsilon_\gamma Z''(t) +Z'(t) -(1+\gamma)Z(t-\tau_\gamma)(1-Z(t)) =0.
\end{equation} 
Thus  the function $w(t)= Z(t)-\phi_0(t)$ satisfies the equation  
\begin{equation}\label{dev}
\epsilon_\gamma w''(t)+w'(t)-w(t)=-(Lw)(t) - G(\vare,\gamma,w)(t),
\end{equation}
where 
$$
G(\vare,\gamma,w)(t)= \epsilon_\gamma\phi_0''(t) + (1+\gamma)w(t-\tau_\gamma)w(t)- \gamma [w(t-\tau_\gamma)(1-\phi_0(t))- \phi_0(t-\tau_\gamma)w(t)] +$$
$$
 (1-\phi_0(t))[w(t-\tau_\gamma) - w(t-\tau)] + w(t) [\phi_0(t-\tau_\gamma) - \phi_0(t-\tau)]-$$
 $$\gamma\phi_0(t-\tau_\gamma)(1-\phi_0(t))-
(1-\phi_0(t))(\phi_0(t-\tau_\gamma)-\phi_0(t-\tau)).
$$
Clearly,  $Lw, G(\epsilon,\gamma,w) \in X^a$ when $w \in X^a$. 
Similarly to  Section \ref{S5}, 
a bounded function $w:\mathbb{R}\to\mathbb{R}$ is a solution of
(\ref{dev}) if and only if
\begin{equation}\label{deva}
(Jw)(t)=H(\vare,\gamma, w)(t),\q t\in\mathbb{R},
\end{equation}
where 
$$
H(0,\gamma, w)(t):=  \int^{+\infty}_te^{t-s}G(0,\gamma,w)(s)ds,$$ 
and, for $\vare >0$, 
$$\displaylines{
H(\vare,\gamma,w)(t)=\int^{+\infty}_t \left[
\frac{e^{\be(\vare_\gamma)(t-s)}}{\sqrt{1+4\vare_\gamma}} - e^{(t-s)}\right](Lw)(s) \, ds\ +\cr
{1\over {\sqrt{1+4\vare_\gamma}}}\left [\int_{-\infty}^t{{e^{\al(\vare_\gamma)(t-s)}}}\left((Lw)(s) + G(\vare,\gamma,w)(s)\right)\, ds+ \int^{+\infty}_t
e^{\be(\vare_\gamma)(t-s)} G(\vare,\gamma,w)(s)
ds\right].}$$
After some lengthy but standard computations (cf. the proof of Lemma \ref{L3.5} above and  Propositions 2.1, 2.2 in \cite{DH} or Lemma 4.2 with Corollary 4.3 in \cite{HW}), we obtain the following
\begin{lem} \label{P1} Suppose that $w \in X^a$.\  Then  there exist  positive $\vare_*, \gamma_*$  such that   $H:[0,\epsilon_*] \times [-\gamma_*, \gamma_*]\times X^a\to X^a$ and $J:X^a\to X^a$  are continuous functions. Furthermore, 
for each $(\epsilon,\gamma, w)\in [0,\epsilon_*] \times [-\gamma_*, \gamma_*]\times X^a$ there exists $D_wH(\epsilon,\gamma, w)\in B(X^a,X^a)$ which depends continuously on  $(\epsilon,\gamma, w)$ in the operator norm $\|\cdot\|$ of  the Banach space $B(X^a,X^a)$ of all bounded linear homomorphisms of $X^a$. Finally, 
$H(0,0, 0)=0,\ D_wH(0,0, 0)=0$ and the kernel Ker\,$J \subset X^a$ of $J$ is finite-dimensional and nontrivial: Ker\,$J \ni \{c\phi_0', \ c \in \R\}$. 
\end{lem}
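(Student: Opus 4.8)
The plan is to carry out a step-by-step verification of the structural properties of the operators $J$ and $H$ that enter the fixed-point reformulation (\ref{deva}), in preparation for the Lyapunov--Schmidt reduction in the Banach space $X^a$. I would first record three elementary facts about $X^a$. (i) $X^a$ is a Banach algebra under pointwise multiplication: for $g,h\in X^a$ the product $gh$ is bounded and continuous, $(gh)(+\infty)=0$, and near $-\infty$ one has $gh-g_\infty h_\infty=(g-g_\infty)h+g_\infty(h-h_\infty)=O(e^{at})$ with $\omega$-periodic limit $g_\infty h_\infty$, so $|gh|_a\le 3|g|_a|h|_a$. (ii) The translation operator is continuous on $X^a$ (Remark \ref{R3}). (iii) For $\be>a$ and $\al<0$ the exponential convolutions $g\mapsto\int_t^{+\infty}e^{\be(t-s)}g(s)\,ds$ and $g\mapsto\int_{-\infty}^{t}e^{\al(t-s)}g(s)\,ds$ map $X^a$ boundedly into itself: they carry bounded functions vanishing at $+\infty$ to functions of the same kind, they carry an $\omega$-periodic function to an $\omega$-periodic one (substitute $s\mapsto s+\omega$), and, splitting the range of integration at $t$ and using $|g-g_\infty|\le Ce^{at}$ on $\R_-$, one finds the remaining part stays $O(e^{at})$ exactly because $\be>a>0>\al$. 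Since $\be(\vare_\gamma)\to 1>a$ and $\al(\vare_\gamma)\to-\infty$ as $\vare\to 0^+$, there is $\vare_*>0$ with $\be(\vare_\gamma)>a$ for all $(\vare,\gamma)\in[0,\vare_*]\times[-\gamma_*,\gamma_*]$; this is the only place where the smallness of $\vare_*$ enters.

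Using (i)--(iii) together with the facts, noted after (\ref{dev}), that $Lw\in X^a$ and $G(\vare,\gamma,w)\in X^a$ for $w\in X^a$, one sees that $J$ and the three blocks of $H$ are finite sums of compositions of the above exponential convolutions, the translation operator, multiplications by the bounded functions $1\pm\phi_0(\cdot-\tau)$ and $\phi_0',\phi_0''\in X^a$, and the bilinear map $(u,v)\mapsto u(\cdot-\tau_\gamma)v(\cdot)$. Hence $J:X^a\to X^a$ and $H:[0,\vare_*]\times[-\gamma_*,\gamma_*]\times X^a\to X^a$ are well defined, and the requisite norm estimates run exactly parallel to those in the proof of Lemma \ref{L3.5} (cf. also Propositions 2.1--2.2 in \cite{DH} and Lemma 4.2 with Corollary 4.3 in \cite{HW}); combined with (ii) and the continuous dependence of $\al(\vare_\gamma),\be(\vare_\gamma),\vare_\gamma,\tau_\gamma$ on $(\vare,\gamma)$, they give the asserted continuity of $H$ and of $J$. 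For fixed $(\vare,\gamma)$ the map $w\mapsto H(\vare,\gamma,w)$ is a bounded affine term plus a single bounded homogeneous quadratic term (the only nonlinear occurrence of $w$ is the product $w(\cdot-\tau_\gamma)w(\cdot)$, a bounded quadratic form on the algebra $X^a$); thus $D_wH(\vare,\gamma,w)$ exists, equals the linear part plus twice the polarised quadratic term evaluated at $(w,\cdot)$, and depends continuously on $(\vare,\gamma,w)$ in the operator norm of $B(X^a,X^a)$. The identities at the origin follow by inspection: at $\vare=\gamma=w=0$ every summand of $G$ vanishes (the term $\vare_\gamma\phi_0''$ has coefficient $\vare_\gamma=0$, all $w$-dependent terms vanish, and every term carrying a factor $\gamma$ or a difference $\phi_0(t-\tau_\gamma)-\phi_0(t-\tau)$ vanishes since $\tau_0=\tau$), so $G(0,0,0)\equiv 0$ and $H(0,0,0)\equiv 0$; moreover $G(0,0,w)(t)=w(t-\tau)w(t)$, so $H(0,0,\cdot)$ is homogeneous of degree two in $w$ and therefore $D_wH(0,0,0)=0$.

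It remains to treat $Ker\,J$. Differentiating the identity $w(t)=\int_t^{+\infty}e^{t-s}(Lw)(s)\,ds$ shows $Jw=0$ if and only if $w$ solves the variational equation $w'(t)=-\phi_0(t-\tau)w(t)+(1-\phi_0(t))w(t-\tau)$, and differentiating (\ref{wri}) along $\phi_0$ shows that $w=\phi_0'$ is one of its solutions. Here $\phi_0'\in X^a$: $\phi_0'(+\infty)=0$ at exponential rate, while by Lemma \ref{L17} $\phi_0'(t)-p'(t)=O(e^{2at})=o(e^{at})$ as $t\to-\infty$, with $\omega$-periodic limit $p'$. Hence $\{c\phi_0':c\in\R\}\subset Ker\,J$, so $Ker\,J$ is nontrivial. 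For finite dimensionality, let $w\in Ker\,J$; then its periodic limit $w_\infty$ solves the hyperbolic periodic linearisation (\ref{lp}), whose $\omega$-periodic solutions form the one-dimensional space $\{cp'\}$ by the hyperbolicity of $p$ established in Lemma \ref{L17}, so $w_\infty=c_0p'$ for some $c_0\in\R$. Then $w-c_0\phi_0'$ lies in $C_{\frak{m}}$ with $\frak{m}=(\mu_1,a)$, $\mu_1\in(-1,0)$, and is a bounded solution of the variational equation; since that equation is asymptotically autonomous with hyperbolic limiting equations ((\ref{lp}) at $-\infty$, $w'=-w$ at $+\infty$), the Hale--Lin roughness and Fredholm results (\cite[Lemmas 4.3 and 4.6]{HL}), applied exactly as in the proofs of Lemma \ref{L3.2} and Lemma \ref{L18}, show that its solutions in $C_{\frak{m}}$ form a finite-dimensional space. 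The linear injection $w\mapsto(c_0,\,w-c_0\phi_0')$ therefore embeds $Ker\,J$ into a finite-dimensional space, so $Ker\,J$ is finite-dimensional.

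I expect the main obstacle to be item (iii) and its counterpart in the $Ker\,J$ argument: establishing that the exponential-kernel convolutions preserve the ``$\omega$-periodic part plus $O(e^{at})$ remainder'' description of $X^a$ with the right periodic limit and without degrading the exponent $a$, and deducing finite dimensionality of $Ker\,J$ despite $X^a$ allowing non-decaying, asymptotically periodic behaviour at $-\infty$. Both are handled by the inequality $a<\min\{1,\beta_1\}$ and by reducing to the weighted space $C_{\frak{m}}$ after subtracting the explicit solution $c\phi_0'$; the remaining estimates are routine and follow the pattern of Lemma \ref{L3.5} and of \cite{DH,HW}.
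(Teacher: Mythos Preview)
Your overall architecture is correct and matches the paper's, but there is a genuine gap precisely at the point the paper singles out as ``the most delicate part of the proof'': the continuity of $D_wH(\vare,\gamma,w)$ in the \emph{operator norm} as a function of $\gamma$. You argue that $w\mapsto H(\vare,\gamma,w)$ is affine plus a bounded quadratic form, so its Fr\'echet derivative is the linear part plus $2Q(\vare,\gamma)(w,\cdot)$, and you assert this depends continuously on $(\vare,\gamma,w)$. Continuity in $w$ does follow from bilinearity, but continuity in $\gamma$ does not: both the linear part and the polarised quadratic term contain operators of the type $h\mapsto h(\cdot-\tau_\gamma)$ acting on the \emph{free} variable $h$, and the translation group on $X^a$ is only strongly continuous (this is exactly what Remark~\ref{R3} gives), not continuous in $B(X^a,X^a)$. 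In the paper's words, ``$D_wG(\vare,\gamma,w)$ does not depend continuously on $\vare,\gamma,w$'', so your composition argument breaks down at this step; the reference to Lemma~\ref{L3.5} does not help because there is no $\gamma$-parameter there.

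What rescues the situation---and what your proposal is missing---is that the exponential integration allows one to move the $\gamma$-shift from $h$ onto a \emph{fixed} function ($w$, or $1-\phi_0$). Concretely, for the representative term $(\frak G(\vare,\gamma,w)h)(t)=\int_t^{+\infty}e^{\be(\vare)(t-s)}w(s)h(s-\gamma)\,ds$ one substitutes $s\mapsto s+\Delta\gamma$ to compare $\frak G(\vare,\gamma_1,w)h$ with $\frak G(\vare,\gamma_2,w)h$; after this shift the difference is controlled by $|w(\cdot-\Delta\gamma)-w(\cdot)|_a$, which tends to $0$ as $\Delta\gamma\to 0$ because $w$ is a single element of $X^a$ and translation is strongly continuous there. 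This is the key computation the paper carries out explicitly, and it is the missing idea in your argument. Your treatment of $\mathrm{Ker}\,J$ (subtract $c_0\phi_0'$ to land in $C_{\frak m}$ and then invoke Hale--Lin) is a legitimate minor variant of the paper's direct dichotomy argument; note only that the limiting equation at $-\infty$ is the \emph{periodic} equation (\ref{lp}), not an autonomous one, so say ``asymptotically periodic with shifted exponential dichotomy'' rather than ``asymptotically autonomous''.
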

\begin{proof} Let $\vare_*, \gamma_*$ be small enough to satisfy $\beta(\vare_\gamma) \in (a,1)$ for all 
$(\vare, \gamma) \in  [0,\epsilon_*] \times [-\gamma_*, \gamma_*]$.   Obviously,  $H(0,0,0)=0$.  In addition, it is easy to see that $G,L:[0,\epsilon_*] \times [-\gamma_*, \gamma_*]\times X^a\to X^a$ are continuous functions. For instance,  the term $G_1(\vare,\gamma,w)(t) = (1+\gamma)w(t-\tau_\gamma)w(t)$ in the expression defining $G(\vare,\gamma,w)(t)$ is the composition of the continuous  (e.g. see Remark \ref{R3}) functions
$$
(-1,1)\times X^a \stackrel{\Gamma_1}\to \R\times\R \times X^a\times X^a  \stackrel{\Gamma_2}\to \R\times X^a\times X^a  \stackrel{\Gamma_3}\to X^a, 
$$
where $\Gamma_1(\gamma, w(\cdot)) = (1+\gamma, \tau_\gamma, w(\cdot), w(\cdot))$, $\Gamma_2(a,b,v(\cdot), w(\cdot)) = (a,v(\cdot-b), w(\cdot))$, 
$\Gamma_3(a, v(\cdot), w(\cdot)) = av(\cdot)w(\cdot)$. The continuity of $J$ follows from the estimate
$$
\left|\int_t^{+\infty}e^{t-s}f(s)ds\right|_a \leq \frac{3-a}{1-a}|f|_a, \ f \in X^a.  
$$
Similarly, for some positive $C$ which does not depend on $\epsilon$, we have that 
\begin{equation}\label{ve}
\left|\int_t^{+\infty}\left[\frac{e^{\beta(\epsilon)(t-s)}}{\sqrt{1+4\epsilon}}-e^{t-s}\right]f(s)ds\right|_a \leq C\epsilon |f|_a, \quad  \left|\int_t^{+\infty}e^{\alpha(\epsilon)(t-s)}f(s)ds\right|_a \leq C\epsilon |f|_a, \ f \in X^a.
\end{equation}
This guarantees the continuity of $H$ when $\epsilon \to 0$. 

Next, for $\vare >0$, we have  that 
$$\displaylines{
(D_wH_1(\vare,\gamma,w)h)(t)=\int^{+\infty}_t \left[
\frac{e^{\be(\vare)(t-s)}}{\sqrt{1+4\vare}} - e^{(t-s)}\right](Lh)(s) \, ds\ + {1\over {\sqrt{1+4\vare}}}\int_{-\infty}^t{{e^{\al(\vare)(t-s)}}}(Lh)(s)ds +\cr
{1\over {\sqrt{1+4\vare}}}\int_{-\infty}^t{{e^{\al(\vare)(t-s)}}} (D_wG(\vare,\gamma,w)h)(s)\, ds+ {1\over {\sqrt{1+4\vare}}} \int^{+\infty}_t
e^{\be(\vare)(t-s)} (D_wG(\vare,\gamma,w)h)(s)
ds=:}$$
$(\frak{L}_1(\vare)+\frak{L}_2(\vare)+\frak{G}_1(\vare,\gamma,w)+\frak{G}_2(\vare,\gamma,w))h$, where
$$
(D_wG(\vare,\gamma,w)h)(t)=  (1+\gamma)[w(t-\tau_\gamma)h(t) +w(t)h(t-\tau_\gamma)]  - \gamma [h(t-\tau_\gamma)(1-\phi_0(t))- \phi_0(t-\tau_\gamma)h(t)] +$$
$$
 (1-\phi_0(t))[h(t-\tau_\gamma) - h(t-\tau)] + h(t) [\phi_0(t-\tau_\gamma) - \phi_0(t-\tau)].$$
 If $\vare =\gamma=0$ then 
$$(D_wH(0,0, w)h)(t)=  \int^{+\infty}_te^{t-s}[w(s-\tau)h(s)+w(s)h(s-\tau)]ds, $$
so  $D_wH(0,0, 0)=0$. 

Now, the continuous dependence (in the operator norm) of $D_wH_1(\epsilon,\gamma, w)=\frak{L}_1(\vare)+\frak{L}_2(\vare)+\frak{G}_1(\vare,\gamma,w)+\frak{G}_2(\vare,\gamma,w)$ on  $\epsilon,\gamma, w$  is the most delicate part of the proof of Lemma \ref{P1}. Actually, it is easy to see that $\frak{L}_j(\vare), \ j =1,2,$ are continuous functions of $\vare$. However, in difference with \cite[Proposition 2.2]{DH},  $D_wG(\vare,\gamma,w)$ does not depend continuously on  $\epsilon,\gamma, w$ so that the continuity of $\frak{G}_j(\vare,\gamma,w)$ cannot be 
obtained as a consequence of the continuity of $D_wG(\vare,\gamma,w)$. 

Fortunately, the integration improves  the continuity properties of  $D_wG(\vare,\gamma,w)$.  Let us clarify  this statement by considering the following (most complicated and representative) term
$$(\frak{G}(\vare,\gamma,w)h)(t):= \int^{+\infty}_t
e^{\be(\vare)(t-s)} w(s)h(s-\gamma)ds$$
of the linear operator  $\frak{G}_2(\vare,\gamma,w)$ (other terms of $\frak{G}_j(\vare,\gamma,w)$ can be analysed in a similar way).   The first inequality of (\ref{ve}) indicates that $\frak{G}(\vare,\gamma,w)$ is continuous with respect to $\vare$ uniformly on 
$w(\cdot)h(\cdot -\gamma)$ from bounded subsets of $X^a$. This means that it suffices to prove that $\frak{G}(\vare,\gamma,w)$ depends continuously on $\gamma, w$. 
Set $q(s):= h(s-\gamma_1),\ \Delta \gamma = \gamma_1-\gamma_2$. Then it is not difficult to check the validity of the following estimates: 
$$
 \left|\int_t^{+\infty}e^{\be(t-s)}w(s)h(s)ds\right|_a \leq \frac{1}{\beta}|w|_\infty|h|_\infty+ \frac{2}{\beta -a}\left(
|w|_\infty|h|_a+ |w|_a|h|_\infty\right) \leq \frac{5|w|_a|h|_a}{\be -a}, 
$$
$$
|\frak{G}(\vare,\gamma_2,w)h-\frak{G}(\vare,\gamma_1,w)h|_a\leq \left|\int_{t+\Delta \gamma}^te^{\be(\vare)(t-s+\Delta \gamma)}w(s-\Delta \gamma)q(s)ds\right|_a+
$$
$$
|e^{\beta(\vare) \Delta \gamma} -1| \left|\int_{t}^\infty e^{\be(\vare)(t-s)}w(s-\Delta \gamma)q(s)ds\right|_a+ 
 \left|\int_{t}^\infty e^{\be(\vare)(t-s)}|w(s-\Delta \gamma)-w(s)|q(s)ds\right|_a\leq 
$$
$$
3e^{a(|\gamma_1|+|\gamma_2|)}|h|_a|w|_a\left\{e^{\beta|\Delta \gamma|}|\Delta \gamma|+ 4 e^{a|\Delta \gamma|}\left|\frac{1-e^{(\beta-a)|\Delta \gamma|}}{\beta -a}\right|\right\}+
$$
$$
\frac{15e^{a(|\gamma_1|+|\gamma_2|)}}{\be -a}|h|_a\left(|w(\cdot-\Delta \gamma)-w(\cdot)|_a+|e^{\beta(\vare) \Delta \gamma} -1| |w(\cdot-\Delta \gamma)|_a\right).
$$
Hence,  
$$
\|\frak{G}(\vare,\gamma,w_2)-\frak{G}(\vare,\gamma,w_1)\|\leq \frac{15e^{a|\gamma|}}{\beta-a}|w_2-w_1|_a,
$$
$$
\|\frak{G}(\vare,\gamma_2,w)-\frak{G}(\vare,\gamma_1,w)\|\leq C_1|\Delta \gamma| + C_2 |w(\cdot-\Delta \gamma)-w(\cdot)|_a,
$$
where $C_j = C_j(a, \gamma_1,\gamma_2, \beta, |w|_a), \ j=1,2,$ are locally bounded functions. Thus we can conclude that  
$\frak{G}(\vare,\gamma,w)$ is continuous with respect to $\gamma, w$ in the operator norm $\|\cdot\|$.

Finally, $Jw=0,\ w \in X^a$, if and only if  
\begin{equation}\label{wok}
w'(t)= -\phi_0(t-\tau)w(t) + (1-\phi_0(t))w(t-\tau).
\end{equation}
Thus $\phi_0' \in$\,Ker\,$J$. Recall now that equation (\ref{wok}) has a shifted dichotomy on $\R_-$ (with exponents 
$\alpha_1=0 < a < \beta_1$ and with one-dimensional strongly unstable space and with one-dimensional center manifold) and it is also exponentially stable on $\R_+$. Then Lemmas 4.5 and 4.6 from \cite{HL} assure that equation  (\ref{wok}) has at most two-dimensional space of 
solutions in $X^a$. \qed
\end{proof}
Corollary \ref{cort5} and Lemma \ref{P1} show that $J:X^a\to X^a$ is a Fredholm operator, so that the Lyapunov-Schmidt reduction can be applied to (\ref{deva}).
First, consider the subspace $Y^a\subset X^a$ defined by 
$$
Y^a=\left\{w \in X^a: \int_0^\omega w_\infty(s)p_*(s)ds=0\right\}.  
$$
Since $(\phi'_0)_\infty = p'(t)$ and $\int_0^\omega p'(s)p_*(s)ds =1$, we obtain $\phi'_0 \not \in Y^a$ and therefore  \cite{HW}  there exists a subspace 
$Z^a \subset Y^a$ such that 
\begin{equation}\label{deco}
X^a = \mbox{Ker\,}J \oplus Z^a. 
\end{equation}
It is clear  that $J: Z^a \to \mathcal{R}(J):= J(X^a)$ is a bijection so that
$J^{-1}:  \mathcal{R}(J) \to Z^a$ is a bounded linear operator due to the Banach open mapping theorem, cf. \cite[Lemma 4.4]{HW}.
Now, in order to find a complementary subspace of $\mathcal{R}(J)$ in $X^a$, consider the smooth function $\zeta(t)$ such that $\zeta(t)=p_*(t)$ for all $t \leq 0$ and $\zeta(t)=0$ for all $t \geq \omega$.  We have 
$\int_0^\omega\zeta_\infty(s) (p_*'(s)+p_*(s))ds =\int_0^\omega p^2_*(s)ds >0$ and therefore $\zeta \not \in \mathcal{R}(J)$ in view of Corollary \ref{cort5}. On the other hand, each $w \in X^a$ can be decomposed as follows 
$$
w = k\zeta + (w-k\zeta), \ \mbox{where}\ k = \int_0^\omega(p_*(s)+p'_*(s))w_\infty(s)ds\big/\int_0^\omega p^2_*(s)ds,
$$
and $P_\zeta w := k\zeta \in \{c\zeta, c \in \R\},$ $w-k\zeta = (I-P_\zeta)w \in\mathcal{R}(J)$. 

As a consequence, the question of the solvability of equation (\ref{deva}) in the space $X^a$ can be simplified to the question of the existence of solutions  $z\in Z^a$ of the system 
$$
z = J^{-1}(I-P_\zeta)H(\vare,\gamma, z), \quad P_\zeta H(\vare,\gamma, z) =0. 
$$
Due to Lemma \ref{P1}, $ D_wJ^{-1}(I-P)H(0,0,0) =0$ and therefore, by the implicit function theorem, 
there exists a continuous function $z= z(\epsilon, \gamma),$ $z: [0,\epsilon_1]\times [0,\gamma_1] \to Z^a$ such that $z(0,0) =0$ and  
\begin{equation}\label{lqq}
Jz(\epsilon, \gamma) = (I-P_\zeta)H(\vare,\gamma, z(\epsilon, \gamma)), 
\end{equation}
cf.  \cite[Lemma 4.6]{HW}.  Hence, in order to complete the proof of the existence of a periodic-to-point connections, it suffices to prove the existence of a continuous  function $\gamma: [0,\epsilon_2] \to \R, \gamma(0) =0, $ $\vare_2 \in (0,\vare_1),$ such that 
$$
P_\zeta H(\vare,\gamma(\epsilon), z(\vare, \gamma(\vare))) =0 \  \ \mbox{for all } \ \vare \in [0, \vare_2]. 
$$
So, let  $H_\infty, J_\infty$ denote operators obtained from $H, J$ as a consequence  of  the replacement of  the operators $G, L$ in the definition of $H, J$  with their limiting parts $G_\infty, L_\infty$: 
 $$L_\infty(w)(t)=(1+p(t-\tau))w(t) - (1-p(t)) w(t-\tau),$$ 
$$
G_\infty(\vare,\gamma,w)(t)= \epsilon_\gamma p''(t) + (1+\gamma)w(t-\tau_\gamma)w(t)- \gamma [w(t-\tau_\gamma)(1-p(t))- p(t-\tau_\gamma)w(t)] +$$
$$
 (1-p(t))[w(t-\tau_\gamma) - w(t-\tau)] + w(t) [p(t-\tau_\gamma) - p(t-\tau)]-
$$
 $$\gamma p(t-\tau_\gamma)(1-p(t))-
(1-p(t))(p(t-\tau_\gamma)-p(t-\tau)).
$$
Then, using the definition of $P_\zeta$, we can rewrite the equation $P_\zeta H(\vare,\gamma, z(\vare, \gamma)) =0 $ in the form 
\begin{equation}\label{ph}
P_\zeta H_\infty(\vare,\gamma, z_\infty(\vare, \gamma)) =0.
\end{equation}
Clearly,  (\ref{ph}) amounts to the equation 
$$
\Lambda(\vare,\gamma):=\int_0^\omega(p_*(s)+p'_*(s))H_\infty(\vare,\gamma,z_\infty(\vare, \gamma))(s)ds=0.
$$
Consequently, due to the implicit function theorem, it suffices to prove that $D_\gamma \Lambda(\vare,\gamma)$ exists and  is a continuous function defined in some neighborhood of $(0,0) \in \R_+\times \R$, and that 
$
D_\gamma \Lambda(0,0)\not=0. 
$
It should be noted here that the continuous differentiability of $\Lambda(\vare,\gamma)$ in $\gamma$ is not  a simple issue. Indeed, observe that 
the function  $H:[0,\epsilon_*] \times [-\gamma_*, \gamma_*]\times X^a\to X^a$ is not differentiable in $\gamma$.  A solution of this problem (proposed in \cite{Hale,HW}) is briefly outlined below, it uses a version of the parametric implicit function theorem, see \cite[Lemma 4.1]{hale}. 

First, from (\ref{lqq}) we obtain also that $z=z_\infty(\epsilon, \gamma)$ satisfies the equation
\begin{equation}\label{jz}
J_\infty z = (I-P_\zeta)H_\infty(\vare,\gamma, z), \ z(0,0) =0.
\end{equation}
It is also clear that 
$$\int_0^\omega z_\infty(\epsilon, \gamma)(s)p_*(s)ds=0,$$
so that $z_\infty(\epsilon, \gamma)$ belongs to the subspace $Y_\omega= \{w \in X_\omega: \int_0^\omega w(s)p_*(s)ds=0\}$ of the space $X_\omega$ of all continuous $\omega-$periodic functions with sup-norm. 
Obviously,  Ker\,$J_\infty= \{cp'(t), c \in \R\}$ that implies 
$X_\omega = \mbox{Ker\,}J_\infty \oplus Y_\omega. 
$

Next, applying to (\ref{jz})  the same arguments as in the case of equation  (\ref{ph}), we
conclude  that, for sufficiently small positive $\vare_3 < \vare_2, \gamma_2 < \gamma_1,$ there exists a unique continuous   solution $\hat z: [0,\epsilon_3]\times [0,\gamma_2] \to Y_\omega$ of equation (\ref{jz}).  Fortunately,  the above mentioned generalised implicit function theorem guarantees now that  $z(\vare,\gamma)$ is 
also continuously differentiable with respect to $\gamma$.
The uniqueness of solution in the space $Y_\omega$  implies that $z_\infty(\epsilon, \gamma)= \hat z(\epsilon, \gamma)$ and therefore $z_\infty(\epsilon, \gamma)$ is continuously differentiable  in $\gamma$.   See  \cite[Lemma 4.7]{HW} or \cite[Proposition 4.6]{DH} for more details.

Contrary to our expectancy, let us suppose now that $D_\gamma \Lambda(0,0)=0$.  Set $z_*(t)= D_\gamma z_\infty(0, 0)(t)$, after differentiating (\ref{jz}) consecutively with respect to $\gamma$ and $t$, 
we find that 
$$
z_*'(t)  = -p(t-\tau)z_*(t) + (1-p(t)) z_*(t-\tau) + p'(t) + \tau (1-p(t))p'(t-\tau).  
$$
This implies that the difference $d(t)= z_*(t)-tp'(t)$ satisfies the homogeneous equation 
$$
d'(t)  = -p(t-\tau)d(t) + (1-p(t)) d(t-\tau).  
$$
Now, since  $d(s+\omega)= z_*(s)-sp'(s) - \omega p'(s) = d(s) - \omega p'(s), \ s \in [-\tau,0]$, 
we conclude that $G_\R(1)$ contains two linearly independent functions $d,p'$ (this idea was  exploited in  the proof of Lemma 4.5 in \cite{DH} and Theorem 4.1 in \cite{hale}).  Thus $\dim G_\R(1) \geq 2$, which contradicts the hyperbolicity of the periodic solution $p(t)$.  

Hence, $
D_\gamma \Lambda(0,0)\not=0
$ and therefore there exists a continuous function $\gamma = \gamma(\vare), \ \gamma(0)=0$, such that  
$
 y(t,\vare) = \phi_0(t/(1+\gamma(\vare)))+ z(\vare, \gamma(\vare))(t/(1+\gamma(\vare)))
$
is the requested connection for (\ref{pfe}). 

Note that $y_\infty (t,\vare) = p(t/(1+\gamma(\vare)))+z_\infty (\vare, \gamma(\vare))(t/(1+\gamma(\vare)))
$. Then relations  (\ref{sinel}) and $z_\infty (0,0)=0$ suggest the sinusoidal form  of $y_\infty (t,\vare)$  \cite[p. 446]{mps2}.  The rigorous proof of this fact is given by Mallet-Paret and Sell in \cite{mps2}.  Indeed,  the change of variables $1-y(t) = e^{z(t)}$ transforms (\ref{pfe}) into the following {\it unidirectional monotone positive feedback system} 
$$
x_0'(t)= x_1(t), \quad \vare x_1'(t)= -\vare x_1^2(t)-x_1(t) + (e^{x_0(t-\tau)}-1). 
$$
The announced  sinusoidal property (invariant with respect to the change of variable $1-y =e^z$) of  nonconstant periodic solutions to such systems is established in Theorem 7.1 of \cite{mps2}.  This observation completes the proof of Theorem \ref{Te2A}. \qed
\begin{remark}
In fact, we believe that  $y_\infty (t,\vare)$ is a slowly periodic solution of (\ref{pfe}) in the spirit of the definition given in the second remark on p. 480 
of \cite{mps2} (and adapted for the positive feedback systems).   It should be noted that the concept of slow oscillations depends on the order and nonlinearities of system under consideration. In particular,  the definition of slowly oscillating periodic solution  given in the paragraph preceding Lemma \ref{L17} does not apply to equation (\ref{pfe}). 
\end{remark}
\begin{remark} Let some normalised kernel $K$ be fixed in (\ref{17nl}). By Alvaro and Coville results \cite{AC}, all fast semi-wavefronts are converging at $+\infty$ (this fact does not exclude their multiplicity). This means that we can expect the appearance of proper semi-wavefronts only for the moderate values of $c$. It would be quite interesting to find some explicit (e.g., in terms of the kernel $K$) estimates for the speed intervals where all three types of waves mentioned in Corollary  \ref{coroc}  exist. 
\end{remark}
\section*{Acknowledgments} 
\noindent  K. Hasik, J. Kopfov\'a and P. N\'ab\v elkov\'a were supported by the ESF project 
CZ.1.07/2.3.00/20.0002. S. Trofimchuk was partially supported by FONDECYT (Chile), project 1150480.

\end{document}